\documentclass[a4paper,12pt]{article}
\usepackage[english]{babel}
\usepackage[top=3cm, bottom=3cm, left=2.3cm, right=2.3cm]{geometry}

\usepackage{microtype}

\usepackage{amssymb,amsthm,enumerate,xcolor,latexsym,dsfont}
\usepackage{bbm,mathrsfs}
\usepackage{amsmath}

\usepackage[pagebackref, draft=false]{hyperref}

\usepackage{color}

%\usepackage[boldsans]{concmath}
%\usepackage{ccfonts} \usepackage[T1]{fontenc}
%\usepackage[charter]{mathdesign}

%\allowdisplaybreaks

\newcommand{\R}{\mathbb{R}}
\newcommand{\Z}{\mathbb{Z}}
\newcommand{\T}{\mathbb{T}}
\newcommand{\N}{\mathbb{N}}
\newcommand{\E}{\mathbb{E}}
\renewcommand{\P}{\mathbb{P}}
\newcommand{\1}{\mathds{1}}
\newcommand{\dd}{\mathrm{d}}
%%%frak-letters%%%
\newcommand{\LL}{\mathcal{L}}
\newcommand{\GG}{\mathcal{G}}
\newcommand{\NN}{\mathcal{N}}
\newcommand{\KK}{\mathcal{K}}
%%%cal-letters%%%
\newcommand{\CL}{\mathcal{L}}
\newcommand{\CK}{\mathcal{U}}
\newcommand{\CC}{\mathcal{C}}
\newcommand{\CF}{\mathcal{F}}
\newcommand{\CD}{\mathcal{D}}
\newcommand{\CS}{\mathcal{S}}
\newcommand{\CE}{\mathcal{E}}

\newcommand{\blue}[1]{{\color{blue} #1}}

\newtheorem{theorem}{Theorem}[section]
\newtheorem{corollary}[theorem]{Corollary}
\newtheorem{definition}[theorem]{Definition}
\newtheorem{lemma}[theorem]{Lemma}
\newtheorem{remark}[theorem]{Remark}
\newtheorem{proposition}[theorem]{Proposition}
\newtheorem*{convention}{Convention}

%%%%%%%%%% Start TeXmacs macros
\newcommand{\assign}{:=}
\newcommand{\backassign}{=:}
\newcommand{\tmop}[1]{\ensuremath{\operatorname{#1}}}
%\newenvironment{proof}{\noindent\textbf{Proof\ }}{\hspace*{\fill}$\Box$\medskip}

%%%%%%%%%% End TeXmacs macros

\begin{document}

\title{The infinitesimal generator of\\ the stochastic Burgers equation}
\author{
  Massimiliano Gubinelli\thanks{Financial support by DFG via the CRC 1060 and
partially by EPSRC Grant Number EP/R014604/1 is gratefully acknowledged.}\\
  Hausdorff Center for Mathematics\\
   \& Institute for Applied Mathematics, Universit{\"a}t Bonn \\
  \texttt{gubinelli@iam.uni-bonn.de}
  \and
  Nicolas Perkowski\thanks{Financial support by DFG via the Heisenberg program and via Research Unit FOR 2402 is gratefully acknowledged.} \\
  Max-Planck-Institute for Mathematics in the Sciences, Leipzig \\
  \& Humboldt--Universit\"at zu Berlin \\
  \texttt{nicolas.perkowski@mis.mpg.de}
}

\maketitle

\begin{abstract}
	We develop a martingale approach for a class of singular stochastic PDEs of Burgers type (including fractional and multi-component Burgers equations) by constructing a domain for their infinitesimal generators. It was known that the domain must have trivial intersection with the usual cylinder test functions, and to overcome this difficulty we import some ideas from paracontrolled distributions to an infinite dimensional setting in order to construct a domain of \emph{controlled functions}. Using the new domain, we are able to prove existence and uniqueness for the Kolmogorov backward equation and the martingale problem. We also extend the uniqueness result for ``energy solutions'' of the stochastic Burgers equation of~\cite{Gubinelli2018Energy} to a wider class of equations.
\end{abstract}

\section{Introduction}
The (conservative) stochastic Burgers equation $u\colon \R_+ \times \T \to \R$ (or $u \colon \R_+ \times \R \to \R$)
\begin{equation}\label{eq:burgers}
	\partial_t u = \Delta u + \partial_x u^2 + \sqrt{2}\partial_x \xi,
\end{equation}
where $\xi$ is a space-time white noise, is one of the most prominent \emph{singular stochastic PDEs}, a class of equations that are ill posed due to the interplay of very irregular noise and nonlinearities. The difficulty is that $u$ only has only distributional regularity (under the stationary measure it is a white noise in space for all times), and therefore the meaning of the nonlinearity $\partial_x u^2$ is dubious.

In recent years, new solution theories like regularity structures~\cite{Hairer2014, Friz2014} or para\-controlled distributions~\cite{Gubinelli2015Paracontrolled, Gubinelli2017KPZ} were developed for singular SPDEs, see \cite{Gubinelli2018Panorama} for an up-to-date and fairly exhaustive review. These theories are based on analytic (as opposed to probabilistic) tools. In the example of the stochastic Burgers equation we roughly speaking use that $u$ is not a generic distribution, but it is a local perturbation of a Gaussian (obtained from $\xi$). We construct the nonlinearity and some higher order terms of the Gaussian by explicit computation, and then we freeze the realization of $\xi$ and of the nonlinear terms we just constructed and use pathwise and analytic tools to control the nonlinearity for the (better behaved) remainder. This requires the introduction of new function spaces of \emph{modelled} (resp. \emph{paracontrolled}) distributions, which are exactly those distributions that are given as local perturbations as described before, and for which the nonlinearity can be constructed.

This point of view was first developed for rough paths, which provide a pathwise solution theory for SDEs by writing the solutions as local perturbations of the Brownian motion~\cite{Lyons1998, Gubinelli2004}. Rough paths provide a new topology in which the solution depends continuously on the driving noise, and this is useful in a range of applications. But of course there are also probabilistic solution theories for SDEs, based for example on It\^o or Stratonovich integration (strong solutions) or on the martingale problem (weak solutions), and depending on the aim it may be easier to work with the pathwise approach or with the probabilistic one.

For singular SPDEs the situation is somewhat unsatisfactory because while the pathwise approach applies to a wide range of equations, it seems completely unclear how to set up a general probabilistic solution theory. There are some exceptions, for example martingale techniques tend to work in the ``not-so-singular'' case when the equation is singular but can be handled via a simple change of variables and does not require regularity structures (sometimes this is called the \emph{Da Prato-Debussche regime}~\cite{DaPrato2003, DaPrato2002}); see~\cite{Stannat2007, Rockner2017} and also~\cite{Flandoli2018Kolmogorov, Flandoli2018Convergence} for a an example where the change of variable trick does not work but still the equation is not too singular. For truly singular equations there exist only very few probabilistic results. R. and X. Zhu constructed a Dirichlet form for the $\Phi^4_3$ equation and used the pathwise results to show that the form is closable~\cite{Zhu2017}, but it is unclear if the process corresponding to this form is the same as the one that is constructed via regularity structures or even if it is unique.

Maybe the strongest probabilistic results exist for the stochastic Burgers equation~\eqref{eq:burgers}: First results, on which we comment more below, are due to Assing~\cite{Assing2002}. In~\cite{Goncalves2014} Gon\c{c}alves and Jara construct so called \emph{energy solutions} to Burgers equation, roughly speaking by requiring that $u$ solves the martingale problem associated to
\[
	\partial_t u = \Delta u + \lim_{\varepsilon \to 0} \partial_x (u\ast \rho^\varepsilon)^2 + \sqrt{2} \partial_x \xi,
\]
where $\rho^\varepsilon$ is an approximation of the identity. This notion of solution is refined in~\cite{Gubinelli2013} where the authors additionally impose a structural condition for the time-reversed process $(u_{T-t})_{t \in [0,T]}$, and they assume that $u$ is stationary. These two assumptions allow them to derive strong estimates for additive functionals $\int_0^\cdot F(u_s) \dd s$ of $u$ via the \emph{It\^o trick}. They obtain the existence of solutions in this stronger sense by Galerkin approximation. The uniqueness of the refined solutions is shown in~\cite{Gubinelli2018Energy}, leading to the first probabilistic well-posedness result for a truly singular SPDE. Extensions to non-stationary initial conditions that are absolutely continuous with respect to the invariant measure are given in~\cite{Goncalves2015, Gubinelli2018Probabilistic}, and in~\cite{Yang2018} some singular initial conditions are considered; see also~\cite{Goncalves2017} for Burgers equation with Dirichlet boundary condition.

The reason why the uniqueness proofs work is that we can linearize the equation via the \emph{Cole-Hopf transform}: By formally applying It\^o's formula, we get $u = \partial_x \log w$, where $w$ solves the stochastic heat equation $\partial_t w = \Delta w + \sqrt{2} w \xi$, a well posed equation which can be handled with classical SPDE approaches as in~\cite{Walsh1986, DaPrato2014, Liu2015}. The proof of uniqueness in~\cite{Gubinelli2018Energy} shows that the formal application of It\^o's formula is allowed for the refined energy solutions of~\cite{Gubinelli2013}, and it heavily uses the good control of additive functionals from the It\^o trick. Since the Cole-Hopf transform breaks down for essentially all other singular SPDEs, there is no hope of extending this approach to other equations.

The aim of the present paper is to provide a new and intrinsic (without transformation) martingale approach to some singular SPDEs. For simplicity we lead the main argumentation on the example of the Burgers equation, but later we also treat multi-component and fractional generalizations. The starting point is the observation that $u$ is a Markov process, and therefore it must have an infinitesimal generator. The problem is that typical test functions on the state space of $u$ (the space of Schwartz distributions) are not in the domain of the generator; this includes the test functions that are used in the energy solution approach, where the term
\[
	\lim_{\varepsilon \to 0} \int_0^t [\partial_x (u_s\ast \rho^\varepsilon)^2] (f) \dd s
\]
for a test function $f$ is not of finite variation, which means that for $\varphi(u) = u(f)$ the process $(\varphi(u_t))_t$ is not a semimartingale, and therefore $\varphi$ cannot be in the domain of the generator. This was already noted by Assing~\cite{Assing2002}, who defined the formal generator on \emph{cylinder test functions} but with image in the space of \emph{Hida distributions}. Our aim is to find a (more complicated) domain of functions that are mapped to functions and not distributions under a formal extension of Assing's operator.

For this purpose we take inspiration from recent developments in \emph{singular diffusions}, i.e. diffusions with distributional drift. Indeed, Assing's results show that we can interpret the Burgers drift as a distribution in an infinite-dimensional space, see also the discussion in~\cite{Gubinelli2018Probabilistic}. In finite-dimensions the papers~\cite{Flandoli2003, Flandoli2004, Delarue2016, Cannizzaro2018} all follow a similar strategy for solving $\dd X_t = b(X_t) \dd t + \dd W_t$ for distributional $b$: They identify a domain for the formal infinitesimal generator $\CL = \tfrac12 \Delta + b \cdot \nabla$ and then show existence and uniqueness of solutions for the corresponding martingale problem. So far this is very classical, but the key observation is that for distributional $b$ the domain does not contain any smooth functions and instead one has to identify a class of non-smooth test functions with a special structure, adapted to $b$. Roughly speaking they must be local perturbations of a linear functional constructed from $b$. This is very reminiscent of the rough path/regularity structure philosophy, and in fact~\cite{Delarue2016, Cannizzaro2018} even use tools from rough paths resp. paracontrolled distributions.

We would like to use the same strategy for the stochastic Burgers equation. But rough paths and controlled distributions are finite-dimensional theories, and here we are in an infinite-dimensional setting. To set up a theory of function spaces and distributions we need a reference measure (in finite dimensions this is typically Lebesgue measure), and we will work with the stationary measure of $u$, the law $\mu$ of the white noise. This is a Gaussian measure, and by the chaos decomposition we can identify $L^2(\mu)$ with the Fock space $\bigoplus_{n=0}^\infty L^2(\T^n)$, which has enough structure so that we can do analysis on it. In that way we construct a domain of \emph{controlled functions} which are mapped to $L^2(\mu)$ by the generator of $u$, and this allows us to define a martingale problem for $u$. By Galerkin approximation we easily obtain the existence of solutions to the martingale problem. To see uniqueness, we use the duality with the Kolmogorov backward equation: Existence for the backward equation yields uniqueness for the martingale problem, and existence for the martingale problem yields uniqueness for the backward equation. We construct solutions to the backward equation by a compactness argument, relying on energy estimates in spaces of controlled functions. In that way we obtain a self-contained probabilistic solution theory for Burgers equation and fractional and multi-component generalizations. As a simple application we obtain the exponential $L^2$-ergodicity of $u$. This program is somewhat related to the recent advances in regularization by noise for SPDEs~\cite{DaPrato2013, DaPrato2016}, where unique strong solutions for SPDEs with bounded measurable drift are constructed by solving infinite-dimensional resolvent type equations. Of course our drift is strongly unbounded (and not even a function).

Finally we study the connection of our new approach with the Gon\c{c}alves-Jara energy solutions. One of the main motivations for studying the martingale problem for singular SPDEs is that it is a convenient tool for deriving the equations as scaling limits: The \emph{weak KPZ universality conjecture}~\cite{Quastel2011, Corwin2012, Quastel2015} says that a wide range of interface growth models converge in the weakly asymmetric or the weak noise regime to the Kardar-Parisi-Zhang (KPZ) equation $h$, for which $u = \partial_x h$. Energy solutions are a powerful tool for proving this convergence, see e.g.~\cite{Goncalves2014, Goncalves2015, Franco2016, Diehl2017, Gubinelli2016Hairer}. For that purpose it is crucial to work with nice test functions, and since there seems to be no easy way of identifying the complicated functions in the domain of the generator of $u$ with test functions on the state space of a given particle system, our new martingale problem is probably not so useful for deriving convergence theorems. This motivates us to show that the notion of energy solution is in fact stronger than our martingale problem: Every energy solution solves the martingale problem for our generator, and thus it is unique in law.

All this also works for the fractional and multi-component Burgers equations. For the fractional Burgers equation we treat the entire \emph{locally subcritical} regime (in the language of Hairer~\cite{Hairer2014}), which in regularity structures would lead to very complicated expansions, while for us a first order expansion is sufficient. Although by now there are very sophisticated and powerful black box type tools available in regularity structures that should handle the complicated expansion automatically~\cite{Bruned2016, Chandra2016, Bruned2017Renormalising}.

The lynchpin of our approach is the Gaussian invariant measure $\mu$, and in principle our methods should extend to other equations with Gaussian invariant measures, like the singular stochastic Navier Stokes equations studied in~\cite{Gubinelli2013}. It would even suffice to have a Gaussian quasi-invariant measure, i.e. a process which stays absolutely continuous (or rather \emph{incompressible} in the sense of Definition~\ref{def:incompressible}) with respect to a Gaussian reference measure. But for general singular SPDEs we would have to work with more complicated measures like the $\Phi^4_3$ measure for which we cannot reduce the analysis to the Fock space. Currently it is not clear how to extend our methods to such problems, so while we provide a probabilistic theory of some singular SPDEs that actually tackles the problem at hand and does not shift the singularity away via the Cole-Hopf transform, it is still much less general than regularity structures and it remains an important and challenging open problem to find more general probabilistic methods for singular SPDEs.

\paragraph{Structure of the paper} Below we introduce some commonly used notation. In Section~\ref{sec:domain} we derive the explicit representation of the Burgers generator on Fock space and we introduce a space of controlled functions which are in the domain of the generator. In Section~\ref{sec:bw-eq} we study the Kolmogorov backward equation and show the existence of solutions with the help of energy estimates for the Galerkin approximation and a compactness principle in controlled spaces, while uniqueness is easy. Section~\ref{sec:mp} is devoted to the martingale problem: We show existence via tightness of the Galerkin approximations and uniqueness via duality with the backward equation. As an application of our results we give a short proof of exponential $L^2$-ergodicity. Finally we formulate a cylinder function martingale problem in the spirit of energy solutions, and we show that it is stronger than the martingale problem and therefore also has unique solutions. In Section~\ref{sec:extensions} we briefly discuss extensions to multi-component and fractional Burgers equations. We do all the analysis on the torus, but with minor changes it carries over to the real line, as we explain in Section~\ref{sec:full-space}. The appendix collects some auxiliary estimates.

\paragraph{Acknowledgments}
The authors would like to thank the Isaac Newton Institute for Mathematical Sciences
for support and hospitality during the programme SRQ: Scaling limits, Rough
paths, Quantum field theory when part of the work on this paper was
undertaken. 

\paragraph{Notation}

We work on the torus $\T = \R / \Z$ and the Fourier transform of $\varphi \in L^2(\T^n)$ is
\[
	\CF \varphi(k_1,\dots, k_n) = \hat \varphi (k_1,\dots, k_n) = \int_{\T^n} e^{-2\pi \iota k\cdot x} \varphi(x) \dd x,\qquad k \in \Z^n.
\]
To shorten the formulas we usually write
\[
	k_{1:n} \assign (k_1, \dots, k_n),\qquad x_{1:n} \assign (x_1, \dots, x_n)
\]
and
\[
	\int_x (\cdots) \assign \int (\cdots) \dd x
\]
Moreover, we set $\Z_0 \assign \Z \setminus\{0\}$ and we mostly restrict our attention to the subspace
\[
	L^2_0(\T^n) \assign \{\varphi \in L^2(\T^n): \hat{\varphi}(k_{1:n}) = 0 \ \forall k \in \Z^n \setminus \Z_0^n\}.
\]
The space $C^k_p (\R^n)$ consists of all $C^k$ functions whose partial derivatives of order up to $k$ have polynomial growth.

We write $a \lesssim b$ or $b \gtrsim a$ if there exists a constant $c > 0$, independent
of the variables under consideration, such that $a \leqslant c \cdot b$, and we write $a \simeq b$ if $a \lesssim b$ and $b \lesssim a$.

\section{A domain for the Burgers generator}\label{sec:domain}

\subsection{The generator of the Galerkin approximation}

Consider the solution $u^m \colon \R_+ \times \T \to \R$ to the Galerkin approximation of the conservative
stochastic Burgers equation
\begin{equation}
  \label{eq:burgers-galerkin} \partial_t u^m = \Delta u^m + B_m (u^m) +
  \sqrt{2} \partial_x \xi \assign \Delta u^m + \partial_x \Pi_m (\Pi_m u^m)^2
  + \sqrt{2} \partial_x \xi,
\end{equation}
where $\xi$ is a space-time white noise and
\[ \Pi_m u (x) = \sum_{| k | \leqslant m} e^{2 \pi \iota k x} \hat{u} (k) \]
is the projection onto the first $2 m + 1$ Fourier modes. Throughout the paper
we write $\mu$ for the law of the average zero white noise on $\T$,
i.e. the centered Gaussian measure on $H^{-1/2-}(\T) \assign \bigcup_{\varepsilon > 0} H^{- 1 / 2 - \varepsilon} (\T)$ with
covariance
\[ \int u (f) u (g) \mu (\dd u) = \langle f - \hat{f} (0), g - \hat{g} (0)
   \rangle_{L^2 (\T)} \]
for all $f, g \in \bigcup_{\varepsilon > 0} H^{1 / 2 + \varepsilon}
(\T)$.

\begin{lemma}
  Equation~(\ref{eq:burgers-galerkin}) has a unique strong solution $u \in C
  (\R_+, H^{- 1 / 2 -} (\T))$ for every deterministic
  initial condition in $H^{- 1 / 2 -} (\T)$. The solution is a strong
  Markov process and it is invariant under $\mu$. Moreover, for all $\alpha >
  1 / 2$ there exists $C = C (m, t, p, \alpha) > 0$ such that
  \[ \E [\sup_{s \in [0, t]} \| u^m_s \|_{H^{- \alpha}}^p] \leqslant
     C (1 + \| u_0^m \|^p_{H^{- \alpha}}) . \]
\end{lemma}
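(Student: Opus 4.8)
The claim has three parts — well-posedness (existence and uniqueness of a strong solution), the strong Markov property together with invariance of $\mu$, and the moment bound — and I would attack them in that order.

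First, for well-posedness I would observe that the Galerkin-truncated nonlinearity $B_m(v) = \partial_x \Pi_m (\Pi_m v)^2$ depends only on the finitely many Fourier modes $\hat v(k)$, $|k| \leqslant m$, and that $\Pi_m u^m$ evolves as a finite-dimensional SDE: writing $a_k(t) = \widehat{u^m_t}(k)$ for $|k| \leqslant m$, one gets $\dd a_k = -4\pi^2 k^2 a_k \,\dd t + 2\pi\iota k \sum_{k_1+k_2=k,\,|k_i|\leqslant m} a_{k_1}a_{k_2}\,\dd t + \sqrt 2 \cdot 2\pi\iota k\,\dd\beta_k$ for independent (complex) Brownian motions $\beta_k$, while the high modes $|k|>m$ solve the linear (Ornstein–Uhlenbeck) equation $\dd a_k = -4\pi^2 k^2 a_k\,\dd t + \sqrt 2\cdot 2\pi\iota k\,\dd\beta_k$, decoupled from the low modes. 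The low-mode system has a locally Lipschitz (polynomial) drift; the conservative structure — the Burgers nonlinearity is divergence-form and $\langle \partial_x \Pi_m (\Pi_m v)^2, \Pi_m v\rangle_{L^2} = 0$ — gives an a priori $L^2$ bound on $\Pi_m u^m$ that rules out blow-up, so there is a unique global strong solution for the low modes; the high modes are solved explicitly. Patching these together and using that the OU modes live in $C(\R_+, H^{-1/2-})$ (the usual Da Prato–Zabczyk regularity for the stochastic heat equation with space-time white noise) gives the unique strong solution $u^m \in C(\R_+, H^{-1/2-})$ for every deterministic initial datum.

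Second, the strong Markov property is immediate from strong well-posedness (Yamada–Watanabe / flow property of SDEs). For invariance of $\mu$, I would argue that $\mu$ is the product of the Gaussian laws of the individual modes $a_k \sim \NN_{\mathbb C}(0, 1)$ (suitably interpreted, with the average-zero constraint removing $k=0$). The high modes are OU processes whose invariant measure is exactly this Gaussian, so they are stationary. For the low modes one checks that the finite-dimensional Gaussian $\bigotimes_{0<|k|\leqslant m}\NN_{\mathbb C}(0,1)$ is invariant for the truncated SDE: the Laplacian part is the OU generator which annihilates the Gaussian density, and the Burgers drift $b = B_m$ is divergence-free with respect to $\mu$ in the sense that $\int \langle b(v), \nabla F(v)\rangle\,\mu(\dd v) = 0$ for cylinder $F$ — this is the standard antisymmetry computation (the drift is a sum of terms $a_{k_1}a_{k_2}\partial_{a_k}$ plus conjugates, and after integrating by parts against the Gaussian the contributions cancel by the relabelling symmetry $k\leftrightarrow k_1\leftrightarrow k_2$). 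Hence $\LL^*\mu = 0$ and $\mu$ is invariant; together with uniqueness this identifies $\mu$ as stationary measure.

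Third, for the moment bound I would start from the $L^2$-type identity for the low modes: by It\^o's formula applied to $\|\Pi_m u^m_t\|_{L^2}^2$, the Burgers drift contributes nothing (conservativity), the Laplacian gives a good negative term, and the noise contributes a martingale plus a bounded-in-$t$ quadratic-variation correction coming from the finitely many modes $|k|\leqslant m$ — so $\E\sup_{s\le t}\|\Pi_m u^m_s\|_{L^2}^{2p} \lesssim 1 + \|\Pi_m u_0^m\|_{L^2}^{2p}$ after Burkholder–Davis–Gundy and Gronwall (the constant depends on $m$ because the noise driving the $L^2$ norm has variance $\sim m$). For the high modes, the explicit OU representation gives $\E\sup_{s\le t}\|\Pi_m^\perp u^m_s\|_{H^{-\alpha}}^p \lesssim C(t,p,\alpha)(1 + \|u_0^m\|_{H^{-\alpha}}^p)$ by the standard factorization-method estimate for the stochastic heat equation, valid for $\alpha > 1/2$. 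Combining, and using $\|\cdot\|_{H^{-\alpha}} \lesssim \|\cdot\|_{L^2}$ on the low modes, yields the asserted bound $\E[\sup_{s\in[0,t]}\|u^m_s\|_{H^{-\alpha}}^p] \leqslant C(m,t,p,\alpha)(1 + \|u_0^m\|_{H^{-\alpha}}^p)$.

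The only genuinely delicate point is the control of the low-mode energy: one must be careful that, although the $L^2$ bound on $\Pi_m u^m$ is uniform in time in expectation, the constant is allowed to (and does) depend on $m$, and that the initial norm enters on the right-hand side in $H^{-\alpha}$ rather than $L^2$ — this is harmless because $\|\Pi_m u_0^m\|_{L^2}^2 \lesssim_m \|u_0^m\|_{H^{-\alpha}}^2$ for the truncated datum, so the $m$-dependent equivalence of norms on the finite-dimensional low-mode space closes the estimate. Everything else is routine finite-dimensional SDE theory plus the classical linear SPDE estimates for the Ornstein–Uhlenbeck part.
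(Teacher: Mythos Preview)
Your proposal is correct and follows essentially the same approach as the paper: decouple into low modes (finite-dimensional SDE with locally Lipschitz drift, global by the $L^2$-conservation $\langle u, B_m(u)\rangle_{L^2}=0$) and high modes (linear OU in $H^{-1/2-}$), deduce the strong Markov property from strong well-posedness, obtain invariance of $\mu$ from the divergence-free structure of $B_m$ in Fourier coordinates together with the OU invariance, and assemble the moment bound from an It\^o/BDG estimate on $\|\Pi_m u^m\|_{L^2}$ plus the standard stochastic-heat estimate on the high modes, using the $m$-dependent norm equivalence on the low-mode space. The paper's proof is terser and defers several of these points to \cite{Gubinelli2013}, but the argument is the same.
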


\begin{proof}
  Local existence and uniqueness and the strong Markov property follow from
  standard theory because written in Fourier coordinates we can decouple $u^m = v^m + Z^m \assign \Pi_m u^m + (1 - \Pi_m) u^m$, 
  where $v^m$ solves a finite-dimensional SDE with locally Lipschitz
  continuous coefficients and $Z^m$ solves an infinite-dimensional but linear
  SDE. Global existence and invariance of $\mu$ are shown in Section~4 of
  \cite{Gubinelli2013}. It is well known and easy to check that $Z^m$ has trajectories in $C(\R_+,H^{-1/2-}(\T))$, see
  e.g. \cite[Chapter~2.3]{Gubinelli2015EBP}, and $v^m$ has compact spectral support and
  therefore even $v^m \in C (\R_+, C^{\infty} (\T))$. Thus $u^m$ has trajectories in $C (\R_+, H^{- 1 / 2 -} (\T))$. The moment bound can be derived using similar
  arguments as in~\cite{Gubinelli2013}. The reason why $v^m$ behaves nicely is that
  $B_m$ leaves the $L^2 (\T)$ norm invariant since
  \[ \langle u, B_m (u) \rangle_{L^2 (\T)} = - \langle \partial_x
     \Pi_m u, (\Pi_m u)^2 \rangle_{L^2 (\T)} = - \frac{1}{3} \langle
     \partial_x (\Pi_m u)^3, 1 \rangle_{L^2 (\T)} = 0 \]
  by the periodic boundary conditions. To see the invariance of $\mu$ we also
  need that $B_m$ is divergence free when written in Fourier coordinates. See
  Section~4 of \cite{Gubinelli2013} or Lemma 5 of \cite{Gubinelli2016Hairer} for details.
\end{proof}

We define the semigroup of $u^m$ for all bounded and measurable $\varphi\colon
H^{- 1 / 2 -} \rightarrow \R$ as $T_t^m \varphi (u) \assign \E_u [\varphi (u^m_t)]$, where under $\P_u$ the process $u^m$ solves
(\ref{eq:burgers-galerkin}) with initial condition $u$.

\begin{lemma}
  \label{lem:semigroup}For all $p \in [1, \infty]$ the family of operators
  $(T^m_t)_{t \geqslant 0}$ can be uniquely extended to a contraction
  semigroup on $L^p (\mu)$, which is continuous for $p \in [1, \infty)$.
\end{lemma}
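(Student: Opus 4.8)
The plan is to first establish the contraction property on $L^1(\mu)$ and $L^\infty(\mu)$ separately and then interpolate. Since $\mu$ is invariant for $u^m$ and $T^m_t$ is positivity-preserving (being given by an expectation over paths), for nonnegative bounded measurable $\varphi$ we have $\int T^m_t\varphi \, \dd\mu = \int \varphi\,\dd\mu$ by invariance, which gives $\|T^m_t\varphi\|_{L^1(\mu)} = \|\varphi\|_{L^1(\mu)}$ on nonnegative functions and hence $\|T^m_t\varphi\|_{L^1(\mu)}\leqslant\|\varphi\|_{L^1(\mu)}$ in general by splitting into positive and negative parts. On the other hand $\|T^m_t\varphi\|_{L^\infty}\leqslant\|\varphi\|_{L^\infty}$ is immediate from $|\E_u[\varphi(u^m_t)]|\leqslant\|\varphi\|_{L^\infty}$, and this bound is even pointwise (and $\mu$-a.e.). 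By the Riesz--Thorin interpolation theorem applied to the sublinear... more precisely to the linear operator $T^m_t$, we obtain $\|T^m_t\varphi\|_{L^p(\mu)}\leqslant\|\varphi\|_{L^p(\mu)}$ for all bounded measurable $\varphi$ and all $p\in[1,\infty]$. Since bounded measurable functions are dense in $L^p(\mu)$ for $p\in[1,\infty)$, the operator $T^m_t$ extends uniquely by continuity to a contraction on $L^p(\mu)$; for $p=\infty$ one keeps the explicit definition (or notes the bound already holds on all of $L^\infty$).

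Next I would check the semigroup property of the extension. On bounded measurable functions $T^m_{s+t} = T^m_s T^m_t$ follows from the Markov property of $u^m$ established in the first lemma, and this identity passes to the $L^p$-closure because both sides are bounded operators agreeing on a dense set. For the strong continuity when $p\in[1,\infty)$, I would argue as follows: for $\varphi$ bounded and continuous (or bounded Lipschitz) on $H^{-\alpha}$ for some $\alpha>1/2$, the path continuity $u^m\in C(\R_+, H^{-1/2-})$ from the first lemma gives $\varphi(u^m_t)\to\varphi(u^m_0)=\varphi(u)$ as $t\downarrow 0$, $\P_u$-a.s., and by bounded convergence $T^m_t\varphi(u)\to\varphi(u)$ pointwise; then another application of dominated convergence in $u$ (using $\mu$ as a finite measure and the uniform bound $\|T^m_t\varphi\|_{L^\infty}\leqslant\|\varphi\|_\infty$) yields $T^m_t\varphi\to\varphi$ in $L^p(\mu)$. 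Combined with the uniform contraction bound and the density of bounded continuous functions in $L^p(\mu)$, a standard $3\varepsilon$-argument upgrades this to $\lim_{t\downarrow 0}\|T^m_t f - f\|_{L^p(\mu)}=0$ for every $f\in L^p(\mu)$, and strong continuity on all of $[0,\infty)$ then follows from the semigroup property and the contraction bound.

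The main obstacle, such as it is, is the strong continuity at $t=0$: one must make sure that the pointwise convergence $T^m_t\varphi\to\varphi$ is available for a rich enough class of $\varphi$ that is dense in $L^p(\mu)$, which is why I would phrase it in terms of bounded continuous functions on a space $H^{-\alpha}$ in which the trajectories of $u^m$ genuinely live and are continuous --- this is precisely what the trajectory statement and moment bound of the first lemma provide. A secondary technical point is to be careful that the $L^p$-extension of $T^m_t$ really is given by the same expectation formula on the larger class (i.e.\ that two bounded measurable representatives of the same $L^p$ class have the same image); this is automatic from the $L^1$-contraction bound, since if $\varphi = \psi$ $\mu$-a.e.\ then $\|T^m_t\varphi - T^m_t\psi\|_{L^1(\mu)}\leqslant\|\varphi-\psi\|_{L^1(\mu)}=0$. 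There are no serious analytic difficulties here; the statement is essentially a packaging of invariance plus the Markov property plus path regularity.
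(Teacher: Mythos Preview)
Your proof is correct and follows essentially the same approach as the paper's (two-sentence) sketch: use invariance of $\mu$ to get the contraction bound, extend by density of bounded measurable functions, and obtain strong continuity from the density of bounded continuous functions in $L^p(\mu)$ together with path continuity of $u^m$. The only minor variation is that you go through $L^1$--$L^\infty$ interpolation, whereas a slightly more direct route is Jensen's inequality $|T^m_t\varphi|^p \leqslant T^m_t(|\varphi|^p)$ combined with invariance; both are standard and equally valid.
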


\begin{proof}
  This uses the invariance of $\mu$ and follows by approximating $L^p$ functions with bounded measurable functions. To see the continuity for $p\in [1,\infty)$ we use that in this case continuous bounded functions are dense in $L^p(\mu)$.
\end{proof}

Our next aim is to derive the generator of the semigroup $T^m$ on $L^2 (\mu)$.
For that purpose let $f_1, \ldots, f_n \in C^{\infty} (\T)$, let
$\Phi \in C^2_p (\R^n, \R)$, the $C^2$ functions with polynomially growing partial derivatives of order up to $2$, and let $\varphi \in
\CC$ be a \emph{cylinder function} of the form $\varphi (u) = \Phi
(u (f_1), \ldots, u (f_n))$. Let us introduce the notation
\begin{gather*}
	\LL_0 \varphi (u) \assign \sum_{i = 1}^n \partial_i \Phi (u (f_1),
   \ldots, u (f_n)) u (\Delta f_i) + \sum_{i, j = 1}^n \partial_{i j}^2 \Phi
   (u (f_1), \ldots, u (f_n)) \langle \partial_x f_i, \partial_x f_j
   \rangle_{L^2 (\T)}, \\
	\GG^m \varphi (u) \assign \sum_{i = 1}^n \partial_i \Phi (u (f_1),
   \ldots, u (f_n)) \langle B_m (u), f_i \rangle_{L^2 (\T)} =
   \int_{\T} B_m (u) (x) D_x \Phi (u) \dd x,
\end{gather*}
where $D_x$ is the Malliavian derivative, and
\[ \LL^m \assign \LL_0 +\GG^m . \]
Then It{\^o}'s formula gives
\[ \dd \varphi (u^m_t) =\LL^m \varphi (u^m_t) \dd t + \sum_{i =
   1}^n \partial_i \Phi (u^m_t (f_1), \ldots, u^m_t (f_n)) \dd M_t (f_i),
\]
where $M (f_i)$ is a continuous martingale under $\P_u$, with
quadratic variation $\langle M (f_i) \rangle_t = 2 \| \partial_x f_i \|_{L^2
(\T)}^2 t$ and therefore $\int_0^{\cdot} \sum_{i = 1}^n \partial_i
\Phi (u^m_t (f_1), \ldots, u^m_t (f_n)) \dd M_t (f_i)$ is a martingale
under $\P_u$. Consequently, we have
\[ T^m_t \varphi (u) - \varphi (u) = \int_0^t T^m_s (\LL^m \varphi)
   (u) \dd s \]
for all $u \in H^{- 1 / 2 -}$.

To extend this to more general functions $\varphi$ and to obtain suitable
bounds for $\LL_0$ and $\GG^m$ we work with the chaos
expansion: Every function $\varphi \in L^2 (\mu)$ can be written uniquely as $\varphi = \sum_{n \geqslant 0} W_n (\varphi_n)$,
where $\varphi_n \in L^2_0 (\T^n)$ is symmetric in its $n$ arguments
and $W_n$ is an $n$-th order Wiener-It{\^o} integral; here $L^2_0
(\T^n) = \{ \varphi \in L^2 (\T^n) : \hat{\varphi} (k) = 0
\forall k \in \Z^n \setminus \Z^n_0 \}$. Moreover, we have
\[ \| \varphi \|_{L^2 (\mu)}^2 = \sum_{n \geqslant 0} n! \| \varphi_n \|_{L^2
   (\T^n)}^2, \]
see \cite{Nualart2006, Janson1997} for details. If $\varphi_n \in L^2_0 (\T^n)$ is
not necessarily symmetric, then we define $W_n (\varphi_n) \assign W_n
(\widetilde{\varphi}_n)$, where $\widetilde{\varphi}_n (x_1, \ldots, x_n) = \tfrac{1}{n!} \sum_{\sigma \in
   \Sigma_n} \varphi_n ( x_{\sigma(1)}, \ldots,  x_{\sigma(n)})$ for the symmetric group $\Sigma_n$ is the symmetrization of $\varphi_n$. By the triangle inequality we have $\| \widetilde{\varphi}_n \|_{L^2
(\T^n)} \leqslant \| \varphi_n \|_{L^2 (\T^n)}$.

\begin{convention} In the following a norm $\| \cdot \|$ without subscript always denotes the
  $L^2 (\mu)$ norm, and an inner product $\langle \cdot, \cdot \rangle$
  without subscript denotes the $L^2 (\mu)$ inner product.
\end{convention}

\begin{lemma}
  Let $\varphi \in \CC$ with chaos expansion $\varphi = \sum_{n
  \geqslant 0} W_n (\varphi_n)$. Then
  \[ \LL_0 \varphi = \sum_{n \geqslant 0} W_n (\Delta \varphi_n)
     \assign \sum_{n \geqslant 0} W_n ((\partial^2_{11} + \cdots +
     \partial^2_{n n}) \varphi_n) . \]
\end{lemma}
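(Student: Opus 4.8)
The plan is to verify the identity $\LL_0 \varphi = \sum_{n \geqslant 0} W_n(\Delta \varphi_n)$ by reducing to the case of a single monomial in Wiener chaos and then computing directly. First I would recall that $\LL_0$ acts on a cylinder function $\varphi(u) = \Phi(u(f_1), \dots, u(f_n))$ by the two-term formula in the definition, which is exactly the generator of the Ornstein--Uhlenbeck type dynamics $\partial_t v = \Delta v + \sqrt{2}\partial_x \xi$ (the linear part of Burgers); the key structural fact is that this is a self-adjoint second order operator on $L^2(\mu)$ whose action is diagonal on the Wiener chaos. Since both sides of the claimed identity are linear in $\varphi$, and since exponential (or polynomial) cylinder functions are dense, it suffices to check the formula on a convenient generating family.

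Concretely, I would take $\varphi(u) = \prod_{i=1}^n u(g_i)$ for $g_1, \dots, g_n \in C^\infty_0(\T)$ (mean-zero smooth functions), or even more simply the Wick-ordered product, whose chaos expansion is explicit: $\varphi = W_n(g_1 \otimes \cdots \otimes g_n) + \text{lower order terms}$, with the lower order terms built from contractions $\langle g_i, g_j\rangle_{L^2}$. Applying the definition of $\LL_0$ with $\Phi(y_1, \dots, y_n) = \prod_i y_i$, the first sum contributes $\sum_i \big(\prod_{j \neq i} u(g_j)\big) u(\Delta g_i)$ and the second sum contributes $\sum_{i \neq j} \big(\prod_{k \neq i,j} u(g_k)\big) \langle \partial_x g_i, \partial_x g_j\rangle_{L^2}$. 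The task is then to match this, chaos component by chaos component, with $\sum_m W_m(\Delta(\cdot)_m)$ applied to the chaos expansion of $\varphi$. On the top chaos $W_n$, the contraction terms produce no top-order contribution (they lower the order by two via the Wick formula), so one needs $W_n\big(\sum_i g_1 \otimes \cdots \otimes \Delta g_i \otimes \cdots \otimes g_n\big) = W_n(\Delta(g_1 \otimes \cdots \otimes g_n))$, which is immediate since $\Delta$ on $L^2(\T^n)$ is exactly $\partial^2_{11} + \cdots + \partial^2_{nn}$. The lower chaoses are handled by induction on $n$, using that $\langle \partial_x g_i, \partial_x g_j\rangle_{L^2} = -\langle \Delta g_i, g_j\rangle_{L^2}$ so that the second sum in $\LL_0$ precisely accounts for the cross terms generated when $\Delta$ hits a leg that has been contracted away; alternatively one can invoke the known fact (e.g.\ from~\cite{Nualart2006, Janson1997}) that the number operator generator $N = \sum W_n(\Delta \varphi_n)$ associated with the Gaussian structure agrees with the carr\'e-du-champ expression on cylinder functions, i.e.\ $\LL_0$ is the generator of the Dirichlet form $\mathcal{E}(\varphi, \psi) = \langle (-\Delta)^{1/2} D\varphi, (-\Delta)^{1/2} D\psi\rangle$ where $D$ is the Malliavin derivative, and then the chaos-diagonality of this form is standard.

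I expect the only genuine subtlety to be bookkeeping: ensuring that the combinatorial factors from the Wick formula (which relate ordinary products of $u(g_i)$ to multiple Wiener--It\^o integrals) match the factor in front of the second-derivative term in the definition of $\LL_0$, and that the restriction to $L^2_0(\T^n)$ is respected (the $\hat\varphi_n(k) = 0$ for $k$ with some $k_\ell = 0$ condition is preserved by $\Delta$ and by all the operations above since the $g_i$ are mean-zero). A clean way to avoid heavy combinatorics is to test against a second cylinder function $\psi$ and verify $\langle \LL_0\varphi, \psi\rangle = \sum_n n! \langle \Delta\varphi_n, \psi_n\rangle_{L^2(\T^n)} = -\sum_n n!\, n\, \langle \partial_x^{(1)}\varphi_n, \partial_x^{(1)}\psi_n\rangle$ using symmetry, and recognize the right-hand side as $-\langle (-\Delta)^{1/2}D\varphi, (-\Delta)^{1/2}D\psi\rangle$, which on cylinder functions equals the integration-by-parts expression that reproduces the two terms defining $\LL_0$. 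Either route is routine once the generating family is fixed; the main obstacle is purely organizational rather than conceptual.
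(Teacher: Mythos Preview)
Your proposal is correct and in fact supplies considerably more detail than the paper does: the paper's own ``proof'' consists of a single line citing \cite[Lemma~3.7]{Gubinelli2018Energy}. Both routes you sketch---the direct computation on tensor-product generators and the Dirichlet-form identification---are standard and sound; the referenced lemma in \cite{Gubinelli2018Energy} proceeds essentially via the first route (explicit action on Wick monomials).

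One minor terminological slip: you refer to ``the number operator generator $N = \sum W_n(\Delta\varphi_n)$'', but in this paper the number operator $\NN$ is multiplication by $n$ on the $n$-th chaos, whereas the operator $\varphi\mapsto\sum_n W_n(\Delta\varphi_n)$ is $\LL_0$ itself. This does not affect the mathematics, but be careful with the nomenclature when writing it up.
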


\begin{proof}
  The proof is the same as for \cite[Lemma 3.7]{Gubinelli2018Energy}.
\end{proof}

\begin{lemma}
  \label{lem:Gm}Let $\varphi \in \CC$ have the chaos expansion
  $\varphi = \sum_{n \geqslant 0} W_n (\varphi_n)$. Then $\GG^m
  =\GG^m_+ +\GG^m_-$, and writing $\rho^m$ for the inverse
  Fourier transform of $\1_{| \cdot | \leqslant m}$ and $f_x \assign f(x - \cdot)$ we have
  \begin{gather}
    \GG^m_+ W_n (\varphi_n) = n W_{n + 1} \left( \int_{x, s}
    \partial_x \rho^m_x (s) (\rho^m_s \otimes \rho^m_s) (\cdot) \varphi_n
    (x, \cdot) \right),\\
    \GG^m_- W_n (\varphi_n) = 2 n (n - 1) W_{n - 1} \left( \int_{x, y,
    s} \partial_x \rho^m_x (s) \rho^m_s (y) \rho^m_s (\cdot) \varphi_n (x,
    y, \cdot) \right),
  \end{gather}
  and moreover we have for all $\varphi_{n + 1} \in L^2_0 (\T^{n +
  1})$ and $\varphi_n \in L^2_0 (\T^n)$
  \[ \langle W_{n + 1} (\varphi_{n + 1}), \GG^m_+ W_n (\varphi_n)
     \rangle = - \langle \GG^m_- W_{n + 1} (\varphi_{n + 1}), W_n
     (\varphi_n) \rangle. \]
\end{lemma}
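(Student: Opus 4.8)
The plan is to reduce everything to the chaos expansion of the random field $x \mapsto B_m(u)(x)$ together with the product formula for multiple Wiener--It\^o integrals. Since $\Pi_m u(y) = u(\rho^m_y) = W_1(\rho^m_y)$ (the convolution representation of the Fourier projection, in the notation $f_x = f(x-\cdot)$), the first--order product formula $W_1(g)W_1(h) = W_2(g\otimes h) + \langle g,h\rangle_{L^2(\T)}$ gives $(\Pi_m u(y))^2 = W_2(\rho^m_y\otimes\rho^m_y) + c_m$ with a $y$--independent constant $c_m$. Applying $\partial_x\Pi_m$, the constant part becomes a derivative of a constant and drops out, so that
\[
  B_m(u)(x) = \int_y \partial_x\rho^m_x(y)\, W_2(\rho^m_y\otimes\rho^m_y),
\]
which for fixed $x$ lives in the second homogeneous chaos.

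Next I would insert this into $\GG^m\varphi(u) = \int_x B_m(u)(x)\, D_x\varphi(u)$ and use that for $\varphi = \sum_n W_n(\varphi_n)$ the Malliavin derivative is $D_x\varphi = \sum_n n\, W_{n-1}(\varphi_n(x,\cdot))$, so that
\[
  \GG^m\varphi = \sum_n n \int_{x,y} \partial_x\rho^m_x(y)\, W_2(\rho^m_y\otimes\rho^m_y)\, W_{n-1}(\varphi_n(x,\cdot)).
\]
It then remains to expand each product $W_2(\rho^m_y\otimes\rho^m_y)\, W_{n-1}(\varphi_n(x,\cdot))$ by the product formula $W_p(f)W_q(g) = \sum_{r=0}^{p\wedge q} r!\binom{p}{r}\binom{q}{r}W_{p+q-2r}(f\otimes_r g)$. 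The contraction $r=0$ lands in the $(n+1)$--th chaos; collecting the factor $n$ and renaming the internal variable $y$ as $s$ reproduces $\GG^m_+ W_n(\varphi_n)$ verbatim. The contraction $r=1$ lands in the $(n-1)$--th chaos; collecting the factor $n\cdot 1!\binom{2}{1}\binom{n-1}{1} = 2n(n-1)$ and renaming variables yields $\GG^m_- W_n(\varphi_n)$. The one delicate point is the contraction $r=2$, which would produce a term in the $(n-3)$--th chaos: after symmetrising the kernel factor $(\partial\rho^m)(x-y)\,\rho^m(y-\cdot)\,\rho^m(y-\cdot)$ over the three symmetric slots of $\varphi_n$ it occupies and using that $\rho^m$ is even, this factor equals $-\tfrac{1}{3}\partial_y\big[\rho^m(x-y)\rho^m(\cdot-y)\rho^m(\cdot-y)\big]$, whose integral over $y\in\T$ vanishes by periodicity; this is the chaos--level manifestation of the structural cancellations ($\langle u,B_m(u)\rangle_{L^2(\T)}=0$ and divergence--freeness of $B_m$) already exploited in the first lemma of this section. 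This establishes $\GG^m=\GG^m_++\GG^m_-$ with the asserted kernels.

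For the duality identity I would take the short route. Since $B_m$ is divergence--free in Fourier coordinates and leaves $\|\Pi_m u\|_{L^2(\T)}$ invariant, the flow of the ODE $\dot u=B_m(u)$ preserves the Gaussian measure $\mu$, and hence its generator $\GG^m$ is skew--adjoint on $L^2(\mu)$, i.e.\ $\langle\GG^m\psi,\chi\rangle=-\langle\psi,\GG^m\chi\rangle$ for cylinder functions $\psi,\chi$. Since $\GG^m_+$ raises and $\GG^m_-$ lowers the chaos degree by one, matching chaos degrees in this identity forces $(\GG^m_+)^*=-\GG^m_-$, which is exactly the claimed relation $\langle W_{n+1}(\varphi_{n+1}),\GG^m_+ W_n(\varphi_n)\rangle=-\langle\GG^m_- W_{n+1}(\varphi_{n+1}),W_n(\varphi_n)\rangle$. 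Should a self--contained verification be preferred, one expands both inner products via $\langle W_k(f),W_k(g)\rangle=k!\,\langle\widetilde f,\widetilde g\rangle_{L^2(\T^k)}$, symmetrises the kernels using the symmetry of $\varphi_n$ and $\varphi_{n+1}$, and transfers the $x$--derivative by a single integration by parts on $\T$ using evenness of $\rho^m$; the sign $-1$ and the matching of the prefactors $n(n+1)!$ and $2(n+1)n\cdot n!$ come out directly.

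The main obstacle is the bookkeeping of the middle step: faithfully tracking the three contractions of $W_2(\rho^m_y\otimes\rho^m_y)$ against $W_{n-1}(\varphi_n(x,\cdot))$, keeping the combinatorial constants straight, and relabelling dummy variables so as to land on the precise kernels of the statement --- and, above all, the clean verification that the $r=2$ contraction vanishes. Once the kernels are in hand, the duality is either immediate from skew--adjointness or a one--line integration by parts.
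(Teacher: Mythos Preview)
Your derivation of the decomposition $\GG^m = \GG^m_+ + \GG^m_-$ matches the paper's proof essentially step for step: express $B_m(u)(x)$ in the second chaos (the constant $c_m$ dropping out under $\partial_x\Pi_m$), apply the Wiener--It\^o product formula to $W_2 \cdot W_{n-1}$, and kill the $r=2$ contraction by symmetrising over the three slots of $\varphi_n$ and recognising a total derivative in the dummy variable, which integrates to zero by periodicity.

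For the duality identity the paper takes your ``self-contained'' route: it expands $\langle W_{n+1}(\varphi_{n+1}),\GG^m_+ W_n(\varphi_n)\rangle$ explicitly, performs one integration by parts in the dummy variable using $\partial_x\rho^m_x(s)=-\partial_s\rho^m_s(x)$ together with the symmetry of $\varphi_{n+1}$ in $r_1,r_2$, and then relabels variables to land on the $\GG^m_-$ kernel. Your ``short route'' is a genuine and clean alternative: measure-preservation of the flow $\dot u = B_m(u)$ (from divergence-freeness plus $\langle u,B_m(u)\rangle_{L^2(\T)}=0$, both recorded in the first lemma of the section) gives $(\GG^m)^*=-\GG^m$ on cylinder functions, and then orthogonality of the chaoses forces $(\GG^m_+)^*=-\GG^m_-$ by degree-matching. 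This avoids all kernel manipulation at the cost of invoking a structural fact, whereas the paper's hands-on computation is fully self-contained and makes the mechanism visible at the kernel level.
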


\begin{proof}
  Since $\| \rho^m_s \|_{L^2 (\T)}^2 = \| \rho^m \|_{L^2
  (\T)}^2$ does not depend on $s$ and thus vanishes under
  differentiation, we have
  \begin{align*}
    B_m (u) (x) & = W_2 \left( \int \partial_x \rho^m_x (s) \rho^m_s \otimes
    \rho^m_s \dd s \right) + \int \partial_x \rho^m_x (s) \| \rho^m_s
    \|_{L^2 (\T)}^2 \dd s\\
    & = W_2 \left( \int \partial_x \rho^m_x (s) \rho^m_s \otimes \rho^m_s
    \dd s \right)
  \end{align*}
  and then, since $D_x W_n(\varphi_n) = n W_{n-1}(\varphi_n(x,\cdot))$ \cite[Proposition~1.2.7]{Nualart2006} and by the contraction rules for Wiener-It\^o integrals~\cite[Proposition~1.1.3]{Nualart2006},
  \begin{align*}
    \int_x B_m (u) (x) D_x W_n (\varphi_n) & = n \int_x W_2 \left( \int_s
    \partial_x \rho^m_x (s) (\rho^m_s)^{\otimes 2} (\cdot) \right) W_{n - 1}
    (\varphi_n (x, \cdot))\\
    & = n W_{n + 1} \left( \int_{x, s} \partial_x \rho^m_x (s)
    (\rho^m_s)^{\otimes 2} (\cdot) \varphi_n (x, \cdot) \right)\\
    & \quad + 2 n (n - 1) W_{n - 1} \left( \int_{x, y, s} \partial_x \rho^m_x
    (s) \rho^m_s (y) \rho^m_s (\cdot) \varphi_n (x, y, \cdot) \right)\\
    & \quad + n (n - 1) (n - 2) W_{n - 3} \left( \int_{x, y, z, s} \partial_x
    \rho^m_x (s) \rho^m_s (y) \rho^m_s (z) \varphi_n (x, y, z, \cdot)
    \right) .
  \end{align*}
  Let us look more carefully at the last term on the right hand side. Note
  that $\partial_x \rho^m_x (s) = -\partial_s \rho^m_s (x)$ and $\varphi_n$ is
  symmetric under exchange of its arguments. Therefore, by symmetrisation,
  \begin{align*}
    & \int_{x, y, z, s} \partial_x \rho^m_x (s) \rho^m_s (y) \rho^m_s (z)
    \varphi_n (x, y, z, \cdot)\\
    & \hspace{40pt} =  \int_{x, y, z, s}  (-\partial_s \rho^m_s (x)) \rho^m_s (y) \rho^m_s (z) \varphi_n (x, y, z,
    \cdot)\\
    & \hspace{40pt} = -\frac13 \int_{x, y, z, s} \partial_s (\rho^m_s (x) \rho^m_s (y) \rho^m_s (z)) \varphi_n (x, y, z, \cdot) = 0
  \end{align*}
  since now $\partial_s$ can be integrated by parts. We deduce that the last
  term in the decomposition of $\int_x B_m (u) (x) D_x W_n (\varphi_n)$
  vanishes.
  
  It remains to show that $-\GG_m^+$ is the adjoint of
  $\GG_m^-$: Since $\varphi_{n + 1}$ is symmetric in its $(n + 1)$
  arguments, we have $\langle \varphi_{n + 1}, \psi \rangle_{L^2 (\T^{n + 1})} =
     \langle \varphi_{n + 1}, \tilde{\psi} \rangle_{L^2 (\T^{n + 1})}$ for all $\psi$, where $\tilde{\psi}$ is the symmetrization of $\psi$, and therefore we do not need to symmetrize the kernel of $\GG^m_+ W_n (\varphi_n)$ in the following computations:
  \begin{align*}
    & \langle W_{n + 1} (\varphi_{n + 1}), \GG^m_+ W_n (\varphi_n)
    \rangle\\
    & = (n + 1) ! \int_{r_{1 : n + 1}} \varphi_{n + 1} (r_{1 : n + 1}) n
    \int_{x, s} \partial_x \rho^m_x (s) \rho^m_s (r_1) \rho^m_s (r_2)
    \varphi_n (x, r_{3 : n + 1})\\
    & = (n + 1) ! \int_{r_{1 : n + 1}} \varphi_{n + 1} (r_{1 : n + 1}) n
    \int_{x, s} \rho^m_x (s) 2 \partial_s \rho^m_s (r_1) \rho^m_s (r_2)
    \varphi_n (x, r_{3 : n + 1})\\
    & = n! 2 (n + 1) n \int_{r_{1 : n + 1, x, s}} \varphi_{n + 1} (r_{1 : n +
    1}) \rho^m_x (s) \partial_s \rho^m_s (r_1) \rho^m_s (r_2) \varphi_n (x,
    r_{3 : n + 1})\\
    & = n! 2 (n + 1) n \int_{r_{1 : n, x, y, s}} \varphi_{n + 1} (x, y, r_{2
    : n}) \rho^m_{r_1} (s) \partial_s \rho^m_s (x) \rho^m_s (y) \varphi_n
    (r_{1 : n}),
  \end{align*}
  where in the last step we renamed the variables as follows: $r_1
  \leftrightarrow x$, $r_2 \rightarrow y$, $r_i \rightarrow r_{i - 1}$ for $i
  \geqslant 3$. The claim now follows by noting that $\rho^m_{r_1} (s) =
  \rho^m_s (r_1)$ and $\partial_s \rho^m_s (x) = - \partial_x \rho^m_x (s)$,
  and thus
  \begin{align*}
    \langle W_{n + 1} (\varphi_{n + 1}), \GG^m_+ W_n (\varphi_n)
    \rangle & = - n! 2 (n + 1) n \int_{r_{1 : n}} \int_{x, y, s} \partial_x
    \rho^m_x (s) \rho^m_s (y) \rho^m_s (r_1) \varphi_{n + 1} (x, y, r_{2 : n})
    \varphi_n (r_{1 : n})\\
    & = - \langle \GG^m_- W_{n + 1} (\varphi_{n + 1}), W_n
    (\varphi_n) \rangle .
  \end{align*}
\end{proof}

\begin{remark}
Note that the proof did not use the specific form of $\rho^m$ and the same arguments
  work as long as $\rho^m$ is an even function.
\end{remark}

For $m \rightarrow \infty$ the kernel for
$\GG_-^m W_n (\varphi_n)$ formally converges to
\[ \int_{x, y} \partial_x (\delta_x (y) \delta_x (r_1)) \varphi_n (x, y, r_{2
   : n - 1}) = - \int_{x, y} \delta_x (y) \delta_x (r_1) \partial_1 \varphi_n
   (x, y, r_{2 : n - 1}) = - \partial_1 \varphi_n (r_1, r_1, r_{2 : n - 1}),
\]
where $\delta$ denotes the Dirac delta. For sufficiently nice $\varphi_n$ this
kernel is in $L^2_0 (\T^{n - 1})$. On the other hand we get for the
formal limit $\GG_+ W_n (\varphi_n)$ the kernel
\[ \int_x \partial_x (\delta_x (r_1) \delta_x (r_2)) \varphi_n (x, r_{3 : n +
   1}) = - \int_x \delta_x (r_1) \delta_x (r_2) \partial_x \varphi_n (x, r_{3
   : n + 1}) = - \delta_{r_1} (r_2) \partial_1 \varphi_n (r_{2 : n + 1}), \]
which will never be in $L^2_0 (\T^{n + 1})$, no matter how nice
$\varphi_n$ is. The idea is therefore to construct (non-cylinder) functions
for which suitable cancellations happen between $\LL_0$ and
$\GG$ and whose image under the Burgers generator $\LL$
belongs to $L^2 (\mu)$.

It will be easier for us to work on the Fock space $\Gamma L^2 = \Gamma L^2
(\T) = \bigoplus_{n = 0}^{\infty} L^2_0 (\T^n)$ with norm
\[ \| \varphi \|_{\Gamma L^2}^2 = \sum_n n! \| \varphi_n \|_{L^2
   (\T^n)}^2 = \sum_n n! \sum_{k \in \Z^n}  |
   \hat{\varphi}_n (k) |^2, \]
where the functions $\varphi_n \in L^2_0 (\T^n)$ are symmetric, and
where we applied Parseval's identity. We also identify non-symmetric
$\varphi_n \in L^2 (\T^n)$ with their symmetrizations. As discussed
above, the space $\Gamma L^2$ is isomorphic to $L^2 (\mu)$, and in the
following we will often identify $\varphi \in \Gamma L^2$ with an element of
$L^2 (\mu)$ and vice versa, without explicitly mentioning it.

\begin{definition}
  The \emph{number operator (or Ornstein-Uhlenbeck operator) $\NN$} acts on Fock space as
  $(\NN \varphi)_n \assign n \varphi_n$. With a small abuse of notation, we denote with the same symbols $\LL,  \LL_0, \GG_+^m, \GG_-^m$ the Fock version of the operators introduced above in such a way that on smooth  cylinder
  functions we have:
  \begin{equation}
    \LL_0 \sum_{n \geqslant 0} W_n (\varphi_n) = \sum_{n \geqslant 0}
    W_n ((\LL_0 \varphi)_n), \quad \GG_{\pm}^m \sum_{n
    \geqslant 0} W_n (\varphi_n) = \sum_{n \geqslant 0} W_n
    ((\GG_{\pm}^m \varphi)_n) .
  \end{equation}
\end{definition}

\begin{lemma}
  In Fourier variables the operators $\LL_0, \GG^m_+,
  \GG^m_-$ are given by
  \begin{equation}
  \begin{aligned}
    \CF (\LL_0 \varphi)_n (k_{1 : n}) & = - (| 2 \pi k_1 |^2 +
    \cdots + | 2 \pi k_n |^2) \hat{\varphi}_n (k_{1 : n}), \\
    \CF (\GG^m_+ \varphi)_n (k_{1 : n}) & = - (n - 1)
    \1_{| k_1 |, | k_2 |, | k_1 + k_2 | \leqslant m} 2 \pi \iota (k_1
    + k_2) \hat{\varphi}_{n - 1} (k_1 + k_2, k_{3 : n}),\\
    \CF (\GG^m_- \varphi)_n (k_{1 : n}) & = - 2 \pi \iota k_1 n
    (n + 1) \sum_{p + q = k_1} \1_{| k_1 |, | p |, | q | \leqslant m}
    \hat{\varphi}_{n + 1} (p, q, k_{2 : n}),
  \end{aligned}
  \end{equation}
  respectively, where the functions on the right hand side may not be
  symmetric, so strictly speaking we still have to symmetrize them.
\end{lemma}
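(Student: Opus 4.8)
The plan is to read the Fock-space kernels off Lemma~\ref{lem:Gm} and Fourier-transform them using the explicit Fourier series of the Dirichlet kernel $\rho^m(w) = \sum_{|a| \leqslant m} e^{2\pi\iota a w}$ (recall $\widehat{\rho^m} = \1_{|\cdot| \leqslant m}$, so that $\rho^m$ is real and even, consistently with the hypotheses of Lemma~\ref{lem:Gm}). The case of $\LL_0$ is immediate: since $(\LL_0\varphi)_n = \Delta\varphi_n = \sum_{j=1}^n \partial^2_{jj}\varphi_n$ and $\CF(\partial_j\psi)(k) = 2\pi\iota k_j\,\hat\psi(k)$ on $\T^n$ by periodic integration by parts, applying this twice and summing over $j$ gives $\CF(\LL_0\varphi)_n(k_{1:n}) = \sum_{j=1}^n (2\pi\iota k_j)^2\,\hat\varphi_n(k_{1:n}) = -(|2\pi k_1|^2 + \cdots + |2\pi k_n|^2)\hat\varphi_n(k_{1:n})$.

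For $\GG^m_\pm$ I would first identify the Fock kernels. Comparing the defining relation $\GG^m_\pm \sum_n W_n(\varphi_n) = \sum_n W_n((\GG^m_\pm\varphi)_n)$ with Lemma~\ref{lem:Gm} and shifting the summation index, one reads off (before symmetrization)
\[
  (\GG^m_+\varphi)_n(r_{1:n}) = (n-1)\int_{x,s}\partial_x\rho^m_x(s)\,\rho^m_s(r_1)\,\rho^m_s(r_2)\,\varphi_{n-1}(x, r_{3:n}),
\]
\[
  (\GG^m_-\varphi)_n(r_{1:n}) = 2n(n+1)\int_{x,y,s}\partial_x\rho^m_x(s)\,\rho^m_s(y)\,\rho^m_s(r_1)\,\varphi_{n+1}(x, y, r_{2:n}).
\]
Now substitute $\partial_x\rho^m_x(s) = \partial_x\rho^m(x-s) = \sum_{|a|\leqslant m} 2\pi\iota a\, e^{2\pi\iota a(x-s)}$, $\rho^m_s(z) = \sum_{|b|\leqslant m} e^{2\pi\iota b(s-z)}$, and the Fourier series of $\varphi_{n\mp 1}$, and integrate out the internal variables $x,s$ (and $y$ in the $\GG^m_-$ case). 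Each integration contributes a Kronecker delta identifying one of the internal frequencies with a combination of the others, which collapses all the sums. Reading off the coefficient of $e^{2\pi\iota k\cdot r}$ then directly yields the stated expression for $\GG^m_+$ (with the constraint $p = k_1+k_2$ and the indicator $\1_{|k_1|,|k_2|,|k_1+k_2|\leqslant m}$ produced, respectively, by the $s$-integration and the three $\rho^m$-frequency cutoffs), while for $\GG^m_-$ it gives the preliminary form
\[
  \CF(\GG^m_-\varphi)_n(k_{1:n}) = -2n(n+1)\,2\pi\iota\sum_{p+q=k_1}\1_{|k_1|,|p|,|q|\leqslant m}\,p\,\hat\varphi_{n+1}(p, q, k_{2:n}).
\]

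The only genuine subtlety, and the last step, is the passage from this preliminary form to the symmetric-looking statement. Since $\varphi_{n+1}$ is symmetric (it is an element of $\Gamma L^2$, or replaced by its symmetrization), $\hat\varphi_{n+1}(p,q,k_{2:n}) = \hat\varphi_{n+1}(q,p,k_{2:n})$, and the cutoff $\1_{|k_1|,|p|,|q|\leqslant m}$ with $q = k_1-p$ is invariant under $p \leftrightarrow q$. Reindexing $p \mapsto k_1 - p$ in the sum therefore gives $\sum_{p+q=k_1}\1_{\cdots}\,p\,\hat\varphi_{n+1}(p,q,k_{2:n}) = \tfrac{k_1}{2}\sum_{p+q=k_1}\1_{\cdots}\,\hat\varphi_{n+1}(p,q,k_{2:n})$, so that the prefactor $-2n(n+1)\,2\pi\iota\,p$ turns into $-2\pi\iota k_1\,n(n+1)$, which is the claimed formula. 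I expect the main obstacle to be purely clerical: keeping track of the many summation indices in the multiple integrals and of the Fourier sign conventions (in particular $\widehat{f'}(k) = 2\pi\iota k\,\hat f(k)$ and $\widehat{f(\,\cdot - z)}(k) = e^{-2\pi\iota kz}\hat f(k)$, used together with $\rho^m$ being even), but no conceptual difficulty arises once the kernels of Lemma~\ref{lem:Gm} and the symmetry of $\varphi_{n+1}$ are in hand.
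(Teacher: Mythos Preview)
Your proposal is correct and follows essentially the same route as the paper: expand the kernels from Lemma~\ref{lem:Gm} in Fourier series of $\rho^m$, integrate out the internal variables to produce the cutoffs and frequency constraints, and for $\GG^m_-$ use the symmetry of $\hat\varphi_{n+1}$ in its first two arguments to replace the factor $p$ by $\tfrac12(p+q)=\tfrac{k_1}{2}$. The paper carries out exactly these computations (slightly more explicitly for $\GG^m_+$), so there is nothing to add.
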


\begin{proof}
  The Fourier representation for $\LL_0$ is obvious. In the following
  we often use without comment that $\rho^m$ is an even function, i.e.
  $\rho^m_s (x) = \rho_x^m (s)$. The kernel for $(\GG_+^m \varphi)_{n + 1}$ has the Fourier
  transform
   \begin{align*}
    & n \int_{r_{1 : n + 1}} e^{- 2 \pi \iota k \cdot r} \int_{x, s}
    \partial_x \rho^m_x (s) \rho^m_s (r_1) \rho^m_s (r_2) \varphi_n (x, r_{3 :
    n + 1})\\
    & = n\1_{| k_1 |, | k_2 | \leqslant m} \int_{r_{3 : n + 1}}
    \int_{x, s} \partial_x \rho^m_x (s) e^{- 2 \pi \iota (k_1 + k_2) s - 2 \pi
    \iota k_{3 : n + 1} \cdot r_{3 : n + 1}} \sum_{\ell} e^{2 \pi \iota
    (\ell_1 x + \ell_{2 : n} \cdot r_{3 : n + 1})} \hat{\varphi}_n (\ell)\\
    & = - n\1_{| k_1 |, | k_2 |, | k_1 + k_2 | \leqslant m} 2 \pi
    \iota (k_1 + k_2) \sum_{\ell_1} \int_x e^{- 2 \pi \iota (k_1 + k_2) x}
    e^{2 \pi \iota \ell_1 x} \hat{\varphi}_n (\ell_1, k_{3 : n + 1})\\
    & = - n\1_{| k_1 |, | k_2 |, | k_1 + k_2 | \leqslant m} 2 \pi
    \iota (k_1 + k_2) \hat{\varphi}_n (k_1 + k_2, k_{3 : n + 1}) .
  \end{align*}
  To derive $\CF (\GG_-^m \varphi)_{n - 1}$, note that
  \begin{align*}
    & \int_{r_{1 : n - 1}} e^{- 2 \pi \iota k_{1 : n - 1} \cdot r_{1 : n -
    1}} \int_{x, y, s} \partial_x \rho^m_x (s) \rho^m_s (y) \rho^m_s (r_1)
    \varphi_n (x, y, r_{2 : n - 1})\\
    & =\1_{| k_1 | \leqslant m} \int_{r_{2 : n - 1}} e^{- 2 \pi
    \iota k_{2 : n - 1} \cdot r_{2 : n - 1}} \int_{x, y, s} e^{- 2 \pi \iota
    k_1 s} \partial_x \rho^m_x (s) \rho^m_s (y) \varphi_n (x, y, r_{2 : n -
    1})\\
    & =\1_{| k_1 | \leqslant m} \int_{r_{2 : n - 1}} e^{- 2 \pi
    \iota k_{2 : n - 1} \cdot r_{2 : n - 1}} \int_{x, y} \sum_{p + q = k_1}
    \1_{| p |, | q | \leqslant m} (- 2 \pi \iota p) e^{- 2 \pi
    \iota (p x + q y)} \varphi_n (x, y, r_{2 : n - 1})\\
    & = - \sum_{p + q = k_1} \1_{| k_1 |, | p |, | q | \leqslant m}
    2 \pi \iota p \hat{\varphi}_n (p, q, k_{2 : n - 1})\\
    & = - \sum_{p + q = k_1} \1_{| k_1 |, | p |, | q | \leqslant m}
    \pi \iota (p + q) \hat{\varphi}_n (p, q, k_{2 : n - 1})\\
    & = - \sum_{p + q = k_1} \1_{| k_1 |, | p |, | q | \leqslant m}
    \pi \iota k_1 \hat{\varphi}_n (p, q, k_{2 : n - 1}),
  \end{align*}
  from where our representation for $\GG^m_-$ follows.
\end{proof}

\subsection{A priori estimates for the Burgers drift}

Here we derive some a priori estimates for the Burgers drift. We work with
weighted norms on the Fock space.

\begin{lemma}
  \label{lem:G-apriori}Let $w\colon \N_0 \rightarrow \R_+$. Then we have uniformly in $m$
  \begin{equation}\label{eq:Gminus-distributional}
    \| w (\NN) (-\LL_0)^{-\gamma} \GG_-^m \varphi \| \lesssim \| w (\NN-
    1) \NN (-\LL_0)^{3 / 4-\gamma} \varphi \|
  \end{equation}
  for all $\gamma \leqslant 1/4$, and
  \begin{equation}
    \label{eq:Gplus-distributional} \| w (\NN) (-\LL_0)^{-
    \gamma} \GG_+^m \varphi \| \lesssim \| w (\NN+ 1)
    (1 + \NN) (-\LL_0)^{3 / 4 - \gamma} \varphi \|
  \end{equation}
  for all $\gamma > 1/4$. Moreover, we have the following $m$-dependent bound:
  \begin{equation}\label{eq:G-m-dependent}
    \| w (\NN) \GG^m \varphi \| \lesssim m^{1/2} \| (w
    (\NN+ 1) + w(\NN - 1)) (1 + \NN) (-\LL_0)^{1 / 2} \varphi \|.
  \end{equation}
\end{lemma}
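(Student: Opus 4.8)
The plan is to prove all three bounds by the same reduction. Passing to Fourier coordinates on the Fock space, the operators $\NN$, $w(\NN)$ and $(-\LL_0)$ all act diagonally: on the $n$th chaos they multiply $\hat\varphi_n(k_{1:n})$ by $n$, $w(n)$ and $\lambda_n(k_{1:n}):=|2\pi k_1|^2+\dots+|2\pi k_n|^2$ respectively, and $\|\varphi\|_{\Gamma L^2}^2=\sum_n n!\sum_k|\hat\varphi_n(k)|^2$. Hence each inequality becomes a family of estimates for discrete operators on $\ell^2(\Z_0^n)$, one per chaos order $n$. Using the Fourier representation of $\GG^m_\pm$ from the previous lemma, and the facts that $(-\LL_0)^\alpha$ and $w(\NN)$ commute with symmetrisation while $\|\widetilde\varphi_n\|\le\|\varphi_n\|$, I would work with the explicit non‑symmetric kernels and invoke the symmetry of $\varphi$ only at one crucial step. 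The whole argument is then a book-keeping of (i) the combinatorial prefactors coming from the chaos expansion, (ii) comparisons of the quadratic forms $\lambda_n$, (iii) a discrete Cauchy--Schwarz over a lattice hyperplane $\{p+q=k_1\}$, and (iv) a symmetrisation argument that recovers powers of $\NN$.

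\textbf{The estimate \eqref{eq:Gminus-distributional}.} The $n$th chaos of $\GG^m_-\varphi$ is built from $\varphi_{n+1}$, carries a prefactor $\sim\NN^2$, and its kernel is $\sim k_1\sum_{p+q=k_1}\1_{|k_1|,|p|,|q|\le m}\hat\varphi_{n+1}(p,q,k_{2:n})$. I would apply Cauchy--Schwarz to the sum over $\{p+q=k_1\}$ against a weight, bounding the dual sum via $\lambda_{n+1}(p,q,k_{2:n})\simeq|2\pi(p+q)|^2+|2\pi p|^2+|2\pi q|^2+\lambda_{n-1}(k_{2:n})$ --- in particular $\sum_{p+q=a}(|2\pi p|^2+|2\pi q|^2)^{-\theta}\lesssim(1+|a|)^{1-2\theta}$ uniformly in $m$ as soon as $\theta>1/2$, the restriction $\gamma\le1/4$ being what makes the remaining exponents balance so that the weight surviving on $\varphi_{n+1}$ is exactly $\lambda_{n+1}^{3/2-2\gamma}$. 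After re-indexing $(p,q,k_{2:n})$ as an $(n+1)$-tuple, the leftover Fourier multiplier is not symmetric, but since $|\hat\varphi_{n+1}|^2$ is, it may be replaced by its symmetrisation; then the identities $\sum_{i=1}^{n+1}|2\pi k_i|^2=\lambda_{n+1}(k_{1:n+1})$ and $\sum_{1\le i<j\le n+1}|2\pi(k_i+k_j)|^2\lesssim n\,\lambda_{n+1}(k_{1:n+1})$ turn the single-pair derivative factor into its average, of size $\lesssim n^{-1}\lambda_{n+1}$, which supplies the extra power of $\NN$ on the right-hand side and reconciles the factors $(n+1)!$ and $n!$ in the Fock norms on the two sides.

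\textbf{The estimate \eqref{eq:Gplus-distributional}.} Now the $n$th chaos of $\GG^m_+\varphi$ is built from $\varphi_{n-1}$, with kernel $\sim(n-1)(k_1+k_2)\hat\varphi_{n-1}(k_1+k_2,k_{3:n})$, and the hyperplane sum appears when one expands the $L^2$-norm; the companion bound $\sum_{k_1+k_2=a}\lambda_n(k_1,k_2,k_{3:n})^{-2\gamma}\lesssim\lambda_{n-1}(a,k_{3:n})^{1/2-2\gamma}$ holds uniformly in $m$ precisely for $2\gamma>1/2$, i.e. $\gamma>1/4$, which is the stated range, and one power of $\NN$ is again recovered by symmetrising in the remaining variables. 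Alternatively, for $\gamma\ge1/2$ one can read \eqref{eq:Gplus-distributional} off from \eqref{eq:Gminus-distributional} by duality, using the adjoint identity $\langle W_{n+1}(\varphi_{n+1}),\GG^m_+W_n(\varphi_n)\rangle=-\langle\GG^m_-W_{n+1}(\varphi_{n+1}),W_n(\varphi_n)\rangle$ of Lemma~\ref{lem:Gm} together with the self-adjointness of $w(\NN)$ and $(-\LL_0)^\alpha$; the swap of the weights from $w(\NN-1)\NN$ to $w(\NN+1)(1+\NN)$ is exactly the corresponding dual book-keeping.

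\textbf{The $m$-dependent bound \eqref{eq:G-m-dependent} and the main obstacle.} Here nothing has to be optimised: instead of exploiting convergence of the hyperplane sums one bounds them by their cardinality, $\#\{(p,q):p+q=k_1,\ |p|,|q|\le m\}\lesssim m$ and likewise for the raising sum, at the cost of a factor $m$ inside the $\ell^2$-norm-squared --- hence the $m^{1/2}$ --- in exchange for which only $(-\LL_0)^{1/2}$ is needed on $\varphi$; the weight $(w(\NN+1)+w(\NN-1))(1+\NN)$ accounts for $\GG^m=\GG^m_++\GG^m_-$ raising and lowering the chaos, with $(1+\NN)$ playing the same role as before and the second power of $\NN$ again absorbed by symmetrisation. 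The main obstacle throughout is not the analysis of the hyperplane sums, which is elementary, but making the book-keeping of the combinatorial factors --- the $n!$ versus $(n\pm1)!$ in the Fock norm, and the $\NN$- and $\NN^2$-prefactors of $\GG^m_\pm$ --- consistent with the precise weights $w(\NN\pm1)$, $\NN$, $(1+\NN)$ in the statement; this is what forces the delicate balance between the convergence condition on the dual sum (the source of the thresholds at $\gamma=1/4$) and the symmetrisation step that produces the remaining power of $\NN$.
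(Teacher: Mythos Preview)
Your approach is correct and matches the paper's: Cauchy--Schwarz on the hyperplane $\{p+q=k_1\}$ (with Lemma~\ref{lem:sum-estimate} for the dual sum) for $\GG_-^m$, the change of variables $\ell=k_1+k_2$ and Lemma~\ref{lem:sum-estimate} again for $\GG_+^m$, a symmetrisation gain of one power of $n$, and the crude counting bound $\#\{p+q=k_1:|p|,|q|\le m\}\lesssim m$ for \eqref{eq:G-m-dependent}. One small clarification: the threshold $\gamma\le 1/4$ does not enter in the Cauchy--Schwarz step (which only needs $3/2-2\gamma>1/2$, i.e.\ $\gamma<1/2$) but in the symmetrisation, where the paper uses the superadditivity of $x\mapsto x^{3/2-2\gamma}$ on $[0,\infty)$ (valid precisely for $3/2-2\gamma\ge 1$) to pass from $(k_1^2+k_2^2)^{3/2-2\gamma}$ to $n^{-1}(k_1^2+\cdots+k_{n+1}^2)^{3/2-2\gamma}$; the identities you wrote down are the linear ($\alpha=1$) instance of this step.
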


\begin{proof}
  1. We start by estimating $\GG_-^m$ uniformly in $m$. Observe that, by the
  Cauchy--Schwartz inequality together with Lemma~\ref{lem:sum-estimate} (here we need $\gamma < 1/2$, which holds because $\gamma \leqslant 1/4$),
  \begin{align*}
    & \left| \sum_{p + q = k_1} \1_{| k_1 |, | p |, | q | \leqslant
    m} \hat{\varphi}_{n + 1} (p, q, k_{2 : n}) \right|^2\\
    & \leqslant \sum_{p + q = k_1} \frac{1}{(p^2 + q^2)^{3 / 2 - 2\gamma}}
    \sum_{p + q = k_1} (p^2 + q^2)^{3 / 2 - 2\gamma} | \hat{\varphi}_{n + 1} (p,
    q, k_{2 : n}) |^2\\
    & \lesssim \frac{|k_1^2|^{2\gamma}}{k_1^{2}} \sum_{p + q = k_1} (p^2 + q^2)^{3
    / 2 - 2\gamma} | \hat{\varphi}_{n + 1} (p, q, k_{2 : n}) |^2,
  \end{align*}
  and thus
  \begin{align*}
    \sum_{k_{1 : n}} (k_1^2+ \dots+ k_n^2)^{-2\gamma} | \CF (\GG_-^m \varphi)_n (k_{1 : n})
    |^2 & \lesssim \sum_{k_{1 : n}} \frac{k_1^2}{(k_1^2+\dots+k_n^2)^{2\gamma}} n^4 \left| \sum_{p + q = k_1}
    \1_{| k_1 |, | p |, | q | \leqslant m} \hat{\varphi}_{n + 1} (p,
    q, k_{2 : n}) \right|^2\\
    & \lesssim n^4 \sum_{k_{1 : n}} \sum_{p + q = k_1} (p^2 + q^2)^{3
    / 2 - 2\gamma} | \hat{\varphi}_{n + 1} (p, q, k_{2 : n}) |^2\\
    & = n^4 \sum_{k_{1 : n + 1}} (k_1^2 + k_2^2)^{3 / 2-2\gamma} |
    \hat{\varphi}_{n + 1} (k_{1 : n + 1}) |^2\\
    & \lesssim n^3 \sum_{k_{1 : n + 1}} (k_1^2 + \cdots + k_{n + 1}^2)^{3 / 2 - 2\gamma} | \hat{\varphi}_{n + 1} (k_{1 : n + 1}) |^2,
  \end{align*}
  where in the last step we used the symmetry of $\hat{\varphi}_{n + 1}$ in the variables $k_{1
  : n + 1}$ and that $3/2-2\gamma \geqslant 1$ (which is equivalent to $\gamma \leqslant 1/4$). Therefore, we have uniformly in $m$
  \begin{align*}
    \| w (\NN) (-\LL_0)^{-\gamma} \GG_-^m \varphi \| & \simeq \sum_{n \geqslant 0}
    n!w (n)^2 \sum_{k_{1 : n}} (k_1^2+ \dots+ k_n^2)^{-2\gamma} | \CF (\GG_-^m \varphi)_n
    (k_{1 : n}) |^2\\
    & \lesssim \sum_{n \geqslant 0} n!w (n)^2 n^3 \sum_{k_{1 : n + 1}} (k_1^2+ \dots+ k_{n+1}^2)^{3/2-2\gamma} | \hat{\varphi}_{n + 1} (k_{1 : n
    + 1}) |^2\\
    & \lesssim \sum_{n \geqslant 1} n!w (n - 1)^2 n^2 \sum_{k_{1 : n}} (k_1^2+ \dots+ k_n^2)^{3/2-2\gamma} | \hat{\varphi}_n (k_{1 : n}) |^2\\
    & = \| w (\NN- 1) \NN (-\LL_0)^{3 / 4 - \gamma}
    \varphi \| .
  \end{align*}
  
  \noindent 2. To derive the uniform-in-$m$ bound for $\GG_+^m$, we apply Lemma~\ref{lem:sum-estimate} in the fourth line below (using that $2\gamma > 1/2$):
    \begin{align*}
    & \sum_{k_{1 : n}} (k_1^2 + \cdots + k_n^2)^{- 2 \gamma} | \CF (\GG_+^m \varphi)_n (k_{1 : n})
    |^2 \\
    & \lesssim \sum_{k_{1 : n}} \1_{| k_1 |, | k_2 |, | k_1 + k_2 | \leqslant
    m} n^2 (k_1^2 + \cdots + k_n^2)^{- 2 \gamma} | k_1 + k_2 |^2 |
    \hat{\varphi}_{n - 1} (k_1 + k_2, k_{3 : n}) |^2\\
    & \lesssim n^2 \sum_{\ell, k_{3:n}} \sum_{k_1 + k_2 = \ell} (k_1^2 + \cdots + k_n^2)^{- 2 \gamma} \ell^2
    | \hat{\varphi}_{n - 1} (\ell, k_{3 : n}) |^2\\
    & \lesssim n^2 \sum_{\ell, k_{3:n}} (\ell^2 + k_3^2 + \cdots
    + k_n^2)^{- 2 \gamma + 1 / 2} \ell^2 | \hat{\varphi}_{n - 1} (\ell, k_{3 : n}) |^2\\
    & \lesssim n \sum_{k_{1 : n - 1}} (k_1^2 + \cdots + k_{n - 1}^2)^{3 / 2 - 2 \gamma} | \hat{\varphi}_{n - 1} (k_{1 : n - 1}) |^2
  \end{align*}
  from where we deduce that uniformly in $m$
  \[ \| w (\NN) (-\LL_0)^{-
    \gamma} \GG_+^m \varphi \| \lesssim \| w (\NN+ 1)
    (1 + \NN) (-\LL_0)^{3 / 4 - \gamma} \varphi \|.
  \]
    
  \noindent 3. If we do not estimate $\GG^m_+$ in a distributional space, we still have
  \begin{align*}
    \sum_{k_{1 : n}} | \CF (\GG_+^m \varphi)_n (k_{1 : n})
    |^2 & \lesssim n^2 \sum_{k_{1 : n}} \1_{| k_1 |, | k_2 |, | k_1 +
    k_2 | \leqslant m} | k_1 + k_2 |^2 | \hat{\varphi}_{n - 1} (k_1 + k_2,
    k_{3 : n}) |^2\\
    & \lesssim n^2 m \sum_{k_{1 : n - 1}}  | k_1 |^2 | \hat{\varphi}_{n - 1}
    (k_{1 : n - 1}) |^2\\
    & \lesssim n m \sum_{k_{1 : n - 1}}  (| k_1 |^2 + \cdots + | k_{n - 1}
    |^2) | \hat{\varphi}_{n - 1} (k_{1 : n - 1}) |^2,
  \end{align*}
  and thus as before $\| w (\NN) \GG_+^m \varphi \| \lesssim m^{1/2} \| w
     (\NN+ 1) (1 + \NN) (-\LL_0)^{1 / 2} \varphi \|$. By making similar use of the cutoff $\1_{|p|, |q| \leqslant m}$ we obtain also the bound $\| w (\NN) \GG_-^m \varphi \| \lesssim m^{1/2} \| w (\NN+ 1) (1 + \NN) (-\LL_0)^{1 / 2} \varphi \|$.
\end{proof}

\begin{remark}
	For later reference let us recall the following bound from the proof: For all $\beta < 1/2$ we have
	\begin{equation}\label{eq:Gminus-CS}
		\left| \sum_{p + q = k_1} \hat{\varphi}_{n + 1} (p, q, k_{2 : n}) \right|^2 \lesssim (k_1^2)^{2\beta-1} \sum_{p + q = k_1} (p^2 + q^2)^{3
    / 2 - 2\beta} | \hat{\varphi}_{n + 1} (p, q, k_{2 : n}) |^2.
	\end{equation}	
\end{remark}

\begin{remark}
	In the study of fluctuations of additive functionals of Markov processes the \emph{graded sector condition} is sometimes useful. This condition assumes that there exists a grading of orthogonal subspaces of $L^2(\mu)$, such that on each subspace the quadratic form of the full generator can be controlled in terms of the one of the symmetric part of the generator, see~\cite[Chapter~2.7.4]{Komorowski2012}. However, while at first glance this may seem tailor made to describe our situation, there is an important restriction: For the graded sector condition we would need
	\[
		|\langle \varphi_n, \GG_- \varphi_{n+1}\rangle| \lesssim (1+n)^\beta \|(-\LL_0)^{1/2} \varphi_n \| \|(-\LL_0)^{1/2} \varphi_{n+1}\|
	\]
	for some $\beta < 1$, see~\cite[eq.~(2.45)]{Komorowski2012} while by Lemma~\ref{lem:G-apriori} we can only take $\beta=1$ and therefore the graded sector condition just barely fails. On the other hand we can take $\|(-\LL_0)^{1/4} \varphi_n \|$ on the right hand side, and we will leverage this gain in regularity. And also for us it will be important that $\beta=1$, for $\beta > 1$ the computations in Section~\ref{sec:bw-apriori} would not work.
\end{remark}

\begin{corollary}
  \label{cor:bw-eq}Let $\varphi \in \Gamma L^2 = L^2 (\mu)$ be such that $\|
  (1 + \NN) (-\LL_0)^{1/2} \varphi \| + \| (-\LL_0)
  \varphi \| < \infty$. Then
  \[ T^m_t \varphi - \varphi = \int_0^t T_s^m (\LL^m \varphi) \dd
     s = \int_0^t \LL^m (T_s^m \varphi) \dd s, \]
  and therefore $t \mapsto T^m_t \varphi$ solves the Kolmogorov backward equation $\partial_t T^m_t \varphi =\LL^m T^m_t \varphi$ with initial condition $T^m_0 \varphi = \varphi$.
\end{corollary}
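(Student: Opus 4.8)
The plan is to combine the Itô-formula identity already established for cylinder functions with the a priori bounds of Lemma~\ref{lem:G-apriori} (in the $m$-dependent form \eqref{eq:G-m-dependent}) and the smoothing properties of the Galerkin semigroup $T^m$. First I would record that for the finite $m$ under consideration, the operator $\LL^m = \LL_0 + \GG^m_+ + \GG^m_-$ is, by \eqref{eq:G-m-dependent} together with the trivial bound $\|\LL_0 \varphi\| = \|(-\LL_0)\varphi\|$, a bounded perturbation of $\LL_0$ relative to the graph norm $\|(1+\NN)(-\LL_0)^{1/2}\varphi\| + \|(-\LL_0)\varphi\|$; more precisely, under the hypothesis $\|(1+\NN)(-\LL_0)^{1/2}\varphi\| + \|(-\LL_0)\varphi\| < \infty$ we have $\LL^m \varphi \in L^2(\mu)$, so the right-hand sides of the claimed identities make sense.

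The next step is to upgrade the cylinder-function identity $T^m_t\varphi - \varphi = \int_0^t T^m_s(\LL^m\varphi)\,\dd s$ to all $\varphi$ in the stated domain by approximation. Since smooth cylinder functions are dense in $L^2(\mu)$, and since on the Fock space the Galerkin generator $\LL^m$ only couples finitely many chaoses in a controlled way, I would choose a sequence $\varphi^{(j)} \in \CC$ (e.g. Fourier truncations of the chaos kernels of $\varphi$, which are again cylinder functions) such that $\varphi^{(j)} \to \varphi$ and $\LL^m\varphi^{(j)} \to \LL^m\varphi$ in $L^2(\mu)$ — the latter convergence being exactly what the bound \eqref{eq:G-m-dependent} (applied with $w \equiv 1$ and with $w(\NN) = \NN$, etc.) guarantees once $\varphi^{(j)} \to \varphi$ in the graph norm. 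Passing to the limit in the cylinder identity using the $L^2$-contractivity of $T^m_s$ (Lemma~\ref{lem:semigroup}) and dominated convergence in $s \in [0,t]$ then yields $T^m_t\varphi - \varphi = \int_0^t T^m_s(\LL^m\varphi)\,\dd s$ for all such $\varphi$.

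For the second identity $\int_0^t T^m_s(\LL^m\varphi)\,\dd s = \int_0^t \LL^m(T^m_s\varphi)\,\dd s$, i.e. that $\LL^m$ commutes with $T^m_s$ on the domain, the cleanest route is to observe that $T^m_s$ maps the domain into itself with uniformly (in $s$ on compacts) bounded graph norm: this follows because $T^m$ is generated by $\LL^m$ and, at finite $m$, the decomposition $u^m = v^m + Z^m$ from the first lemma shows $T^m_s$ has genuine smoothing on the low modes $v^m$ and is the Ornstein--Uhlenbeck semigroup on the high modes $Z^m$, so it preserves $\NN$-weighted and $(-\LL_0)$-weighted norms. Then $\LL^m T^m_s\varphi$ is well-defined, and applying the first identity to $T^m_s\varphi$ in place of $\varphi$ (for a.e.\ $s$), differentiating in $t$, and using continuity in $L^2(\mu)$ of both $s\mapsto T^m_s(\LL^m\varphi)$ and $s\mapsto \LL^m T^m_s\varphi$ gives the stated equality of integrands and hence the backward equation $\partial_t T^m_t\varphi = \LL^m T^m_t\varphi$ with $T^m_0\varphi = \varphi$.

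The main obstacle is the domain-invariance statement $T^m_s(\mathrm{Dom}) \subseteq \mathrm{Dom}$ with locally uniform graph-norm bounds: controlling $\|(1+\NN)(-\LL_0)^{1/2}T^m_s\varphi\| + \|(-\LL_0)T^m_s\varphi\|$ requires commutator estimates between $T^m_s$ and the unbounded weights $\NN$ and $(-\LL_0)$. The saving grace at finite $m$ is that $\GG^m$ changes the chaos number by exactly $\pm 1$ and carries a factor $m^{1/2}$, so a Gronwall argument on $\sum_n n!\,(1+n)^2 \|(-\LL_0)^{1/2}(T^m_s\varphi)_n\|^2$ closes with an $m$-dependent (but $s$-uniform on $[0,t]$) constant; the analogous argument for the $\|(-\LL_0)\varphi\|$ part uses that $\LL_0$ commutes with $T^m$ restricted to each fixed chaos up to the off-diagonal $\GG^m$ terms, which are again handled by \eqref{eq:G-m-dependent}. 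I would not belabor these Gronwall estimates, as they are routine given the a priori bounds already proved, but they are the technical heart of the corollary.
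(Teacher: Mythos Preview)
Your approximation argument for the first identity $T^m_t\varphi - \varphi = \int_0^t T^m_s(\LL^m\varphi)\,\dd s$ matches the paper's exactly. The divergence is in how you obtain the second identity and the backward equation.

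The paper's route is considerably shorter: from the first identity together with the strong continuity of $s\mapsto T^m_s(\LL^m\varphi)$ (Lemma~\ref{lem:semigroup}), one gets $t^{-1}(T^m_t\varphi-\varphi)\to\LL^m\varphi$ in $L^2(\mu)$ as $t\to 0$. This means $\varphi$ lies in the domain of the \emph{Hille--Yosida generator} of $(T^m_t)$, and that generator agrees with our explicitly defined $\LL^m$ on $\varphi$. At that point the commutation $T^m_s\LL^m\varphi=\LL^m T^m_s\varphi$ and the backward equation are immediate consequences of abstract semigroup theory (the paper cites Ethier--Kurtz, Proposition~1.1.5); no domain-invariance for the concrete norm $\|(1+\NN)(-\LL_0)^{1/2}\cdot\|+\|(-\LL_0)\cdot\|$ is ever needed.

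Your alternative---proving directly that $T^m_s$ preserves this graph norm via Gronwall on $\sum_n n!(1+n)^2\|(-\LL_0)^{1/2}(T^m_s\varphi)_n\|^2$---is plausible but not as routine as you suggest. The paper does establish bounds of precisely this type later (Lemma~\ref{lem:bw-apriori}), and they require the antisymmetry of $\GG^m$ together with a dyadic partition-of-unity trick in the $\NN$ variable to make the commutator terms close; a naive application of \eqref{eq:G-m-dependent} alone produces a loss in the $\NN$-weight that does not obviously Gronwall away. So while your route can be made to work, it front-loads the a priori estimates of Section~\ref{sec:bw-apriori}, whereas the paper's use of abstract semigroup theory sidesteps the issue entirely at this stage.
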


\begin{proof}
  Let $u_m$ be the solution of the martingale problem for the generator
  $\LL^m$, with initial condition $u$. If $\varphi \in \CC$ is a
  cylinder function, then
  \[ T^m_t \varphi (u) - \varphi (u) =\E_u \left[ \int_0^t \LL^m \varphi (u_m
     (s)) \dd s \right] = \int_0^t T^m_s (\LL^m \varphi) (u) \dd s,
  \]
  so we get the identity $T^m_t \varphi - \varphi = \int_0^t T^m_s
  (\LL^m \varphi) \dd s$ by approximation (with a Bochner integral
  in $L^2 (\mu)$ on the right hand side), where we used our a priori estimates
  for $\GG^m_{\pm}$ and the trivial identity $\| \LL_0 \psi
  \| = \| (-\LL_0) \psi \|$. By Lemma~\ref{lem:semigroup} the map $s
  \mapsto T^m_s \LL^m \varphi \in L^2 (\mu)$ is continuous, and thus $t^{-1}(T^m_t \varphi - \varphi) \to \LL^m
     \varphi$ as $t \to 0$, where the convergence is in $L^2 (\mu)$. From this it follows that $\varphi
  \in \tmop{dom} (\LL^m)$, where now we take $\LL^m$ as the
  infinitesimal generator of $(T^m_t)_{t \geqslant 0}$ (which is only a small
  abuse of notation, because both our definitions of $\LL^m$ agree on
  cylinder functions). Our claim now follows by standard results for
  semigroups in Banach spaces, see e.g. Proposition 1.1.5 in \cite{Ethier1986}.
\end{proof}

\subsection{Controlled functions}\label{sec:controlled}

Lemma~\ref{lem:G-apriori} gives bounds for $\GG^m \varphi$ that are either in distributional spaces, or they diverge
with $m$. To construct a domain that is mapped to $\Gamma L^2$ by the limiting generator $\LL$ we
need to consider functions $\varphi$ for which $\GG \varphi$ and $\LL_0 \varphi$ have some cancellations, so in particular also $\LL_0 \varphi$ should also be a distribution and $\varphi$ should be non-smooth. For finite-dimensional diffusions with distributional drift $b$ such functions can be constructed by solving the resolvent equation $(\lambda - \tfrac12 \Delta) u = b \cdot \nabla u + v$ for nice $v$.

\begin{remark}
	This remark addresses experts in pathwise approaches to singular SPDEs and can be skipped: If $b$ is in the Besov space $\CC^{-\alpha} \assign B_{\infty, \infty}^{-\alpha}$ for $\alpha > 0$, then $u \mapsto b \cdot \nabla u$ is well defined whenever $u \in \CC^{1+\alpha+\varepsilon}$ for some $\varepsilon > 0$, and in that case $b \cdot \nabla u \in \CC^{-\alpha}$. Since the Laplacian gains back $2$ degrees of regularity we are mapped back to $\CC^{2-\alpha}$, so we can close the estimates if $2-\alpha > 1+\alpha$, i.e. if $\alpha < 1/2$. This is the ``Young regime'', but the equation is subcritical for all $\alpha < 1$ and for $\alpha \in [1/2, 1)$ we need to assume that $u$ is not a generic element of the function space $\CC^{2-\alpha}$ but instead it has a special structure, adapted to the equation (it is modelled, or paracontrolled if $\alpha < 2/3$).
\end{remark}

In our case we could start with a nice
function $\psi \in \Gamma L^2$ and try to solve
\[ (\lambda -\LL_0) \varphi =\GG \varphi +
   \psi \qquad \Leftrightarrow \qquad \varphi = (\lambda - \LL_0)^{-1} \GG \varphi + (\lambda - \LL_0)^{-1} \psi \]
so that $\LL \varphi = \lambda \varphi - \psi$, and the right hand side is in $\Gamma L^2$ if
$\varphi, \psi \in \Gamma L^2$. Regarding regularity with respect to $\LL_0$, this is actually in the ``Young regime'': $\GG \varphi$ is well defined whenever $\varphi \in (-\LL_0)^{-1/4-\varepsilon} \Gamma L^2$, and then $\GG$ loses $(-\LL_0)^{3/4}$ ``derivatives'', while $(\lambda-\LL_0)^{-1}$ gains  enough regularity to map back to $(-\LL_0)^{-1/4-\varepsilon} \Gamma L^2$. But in this formal discussion we ignored the behavior with respect to $\NN$, and we are unable to solve the resolvent equation with such simple arguments because $\GG$ introduces some growth in $\NN$ which cannot
be cured by applying $(\lambda -\LL_0)^{- 1}$. So instead we introduce an approximation $\GG^{\succ}$ of $\GG$ which captures the singular part of the small scale behaviour of $\GG$ by letting
\[ \CF (\GG^{\succ} \varphi)_n (k_{1 : n}) \assign
   \1_{| k_{1 : n} |_{\infty} \geqslant N_n} \CF (\GG \varphi)_n (k_{1 : n}) \]
for a suitable ($\NN$-dependent) cutoff $N_n$ to be determined in order for this operator to be small enough in certain norms. Using $\GG^{\succ}$ we introduce a controlled Ansatz of the form
\begin{equation}\label{eq:controlled}
	 \varphi = (-\LL_0)^{- 1} \GG^{\succ} \varphi +
   \varphi^{\sharp}, 
\end{equation}
where $\varphi^{\sharp}$ will be chosen with sufficient regularity in $\Gamma
L^2$. 
 Note that this is essentially the resolvent equation for $\lambda = 0$ and $\psi = (-\LL_0) \varphi^\sharp$, except that we replaced $\GG$ with $\GG^\succ$. The motivation for this is that now we can trade in regularity in $(-\LL_0)$ for regularity in $\NN$, as will become clear from the the computations below. 
A useful intuition about the Ansatz~\eqref{eq:controlled} is that, starting from a given test function $\varphi^\sharp$, it "prepares" functions $\varphi$ which have the right small scale behaviour compatible with the operator $\LL$. 
 
 We start by showing that for an appropriate cutoff $N_n$ we can
solve equation~\eqref{eq:controlled} and express $\varphi$ as a function of $\varphi^{\sharp}$.

\begin{definition}
	A \emph{weight} is a map $w \colon \N_0 \rightarrow (0, \infty)$ such that there exists $C > 0$ with $w (n) \leqslant C w (n + i)$ for $i \in \{ - 1, 1 \}$. In that case we write $|w|$ for the smallest such constant $C$.
\end{definition}

\begin{lemma}\label{lem:controlled-fct}
	Let $w$ be a weight, let $\gamma \in (1/4, 1 / 2]$, and let $L \geqslant 1$. For $N_n = L (1 + n)^3$ we have
  \begin{equation}
    \label{eq:controlled-fct-1} \| w (\NN) (-\LL_0)^{\gamma}
    (-\LL_0)^{- 1} \GG^{\succ} \varphi \| \lesssim |w| L^{-
    1 / 2} \| w (\NN) (-\LL_0)^{\gamma} \varphi \|.
  \end{equation}
  Thus there exists $L_0 = L_0 (|w|)$ such that for all $L \geqslant
  L_0$ and all $\varphi^{\sharp}$ with $\| w (\NN)
  (-\LL_0)^{\gamma} \varphi^{\sharp} \| < \infty$ there is a unique
  controlled
  \[
  	\KK \varphi^{\sharp} \assign \varphi = (-\LL_0)^{- 1} \GG^{\succ} \varphi + \varphi^{\sharp}
  \]
  in $w (\NN)^{- 1} (-\LL_0)^{- \gamma} \Gamma L^2$, and
  $\KK \varphi^{\sharp}$ satisfies
  \begin{equation}
    \label{eq:controlled-fct-estimate} \| w (\NN)
    (-\LL_0)^{\gamma} \KK \varphi^{\sharp} \| + |w|^{- 1} L^{1
    / 2} \| w (\NN) (-\LL_0)^{\gamma} (\KK
    \varphi^{\sharp} - \varphi^{\sharp}) \| \lesssim \| w (\NN)
    (-\LL_0)^{\gamma} \varphi^{\sharp} \| .
  \end{equation}
  We also write $\varphi^{\succ} \assign \KK \varphi^{\sharp} -
  \varphi^{\sharp} = (-\LL_0)^{- 1} \GG^{\succ}
  \KK \varphi^{\sharp}$.
\end{lemma}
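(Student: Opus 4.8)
The plan is to reduce everything to the core bound \eqref{eq:controlled-fct-1} and then obtain the existence and uniqueness of $\KK\varphi^{\sharp}$, together with \eqref{eq:controlled-fct-estimate}, by a Neumann series argument. Concretely, work in the Banach space $E_w := w(\NN)^{-1}(-\LL_0)^{-\gamma}\Gamma L^2$ with norm $\|\varphi\|_{E_w} := \|w(\NN)(-\LL_0)^{\gamma}\varphi\|$. By \eqref{eq:controlled-fct-1} the linear operator $\mathcal{A} := (-\LL_0)^{-1}\GG^{\succ}$ is bounded on $E_w$ with $\|\mathcal{A}\|_{E_w \to E_w} \leqslant C |w| L^{-1/2}$; choosing $L_0(|w|) := (2C|w|)^2$ we get $\|\mathcal{A}\|_{E_w \to E_w} \leqslant \tfrac12$ for $L \geqslant L_0$, so $\mathrm{Id} - \mathcal{A}$ is invertible on $E_w$ and $\KK\varphi^{\sharp} := (\mathrm{Id} - \mathcal{A})^{-1}\varphi^{\sharp} = \sum_{j \geqslant 0}\mathcal{A}^j \varphi^{\sharp}$ is the unique element of $E_w$ solving $\varphi = (-\LL_0)^{-1}\GG^{\succ}\varphi + \varphi^{\sharp}$. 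The first half of \eqref{eq:controlled-fct-estimate} is then $\|\KK\varphi^{\sharp}\|_{E_w} \leqslant 2\|\varphi^{\sharp}\|_{E_w}$; for the second half, $\varphi^{\succ} = \KK\varphi^{\sharp} - \varphi^{\sharp} = \mathcal{A}\KK\varphi^{\sharp}$, so \eqref{eq:controlled-fct-1} gives $\|\varphi^{\succ}\|_{E_w} \leqslant C|w|L^{-1/2}\|\KK\varphi^{\sharp}\|_{E_w} \leqslant 2C|w|L^{-1/2}\|\varphi^{\sharp}\|_{E_w}$, which is the required bound for $\varphi^{\succ}$.

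So the only real work is \eqref{eq:controlled-fct-1}. I would split $\GG^{\succ} = \GG^{\succ}_+ + \GG^{\succ}_-$, where $\GG^{\succ}_{\pm}$ denotes $\GG_{\pm}$ followed by the multiplier $\1_{|k_{1:n}|_{\infty} \geqslant N_n}$, and estimate both pieces in Fourier variables. The common point is that on the cut-off set $k_1^2 + \dots + k_n^2 \geqslant N_n^2$ with $N_n = L(1+n)^3$, so a spare factor $(k_1^2 + \dots + k_n^2)^{-a}$ ($a > 0$) can be replaced by $N_n^{-2a} = L^{-2a}(1+n)^{-6a}$; the exponent $3$ in $N_n$ is calibrated so that this polynomial decay in $\NN$ beats the polynomial $\NN$-growth produced by $\GG_{\pm}$, and the $L$-power gives the small constant.

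For $\GG^{\succ}_+$ I would write $(-\LL_0)^{\gamma-1} = (-\LL_0)^{-1/4}(-\LL_0)^{-(3/4-\gamma)}$, bound the $(-\LL_0)^{-1/4}$ factor by $N_n^{-1/2}$ on the cut-off set, and apply the a priori estimate \eqref{eq:Gplus-distributional} to $(-\LL_0)^{-(3/4-\gamma)}\GG_+\varphi$ with exponent $\gamma' = 3/4 - \gamma \in (1/4, 1/2]$ (at the endpoint $\gamma = 1/2$, where $\gamma' = 1/4$ is borderline for Lemma~\ref{lem:sum-estimate}, one instead runs a short direct Fourier computation keeping the $\ell^2$ ``derivative'' under control via Lemma~\ref{lem:sum-estimate} and a case split according to whether the frequency $\geqslant N_n$ sits among $k_1, k_2$ or among $k_{3:n}$). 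This produces exactly $(-\LL_0)^{\gamma}$ on the right with weight $w(\NN+1)(1+\NN)$, and the factor $(1+n)^{-3/2}$ from $N_n^{-1/2}$ absorbs the $(1+n)$ while $w(n+1) \leqslant |w|w(n)$ yields the $|w|$. For $\GG^{\succ}_-$ the estimate \eqref{eq:Gminus-distributional} cannot be used directly (it needs $\gamma' \leqslant 1/4$ and always delivers $(-\LL_0)^{1/2}$, not $(-\LL_0)^{\gamma}$, on the input), so I would redo the computation: starting from $\CF(\GG_-\varphi)_n = -2\pi\iota k_1 n(n+1)\sum_{p+q=k_1}\1_{\cdots}\hat\varphi_{n+1}(p,q,k_{2:n})$, apply Cauchy--Schwarz with weight $(p^2+q^2)^{2\gamma}$ — summable over $\{p+q=k_1\}$ precisely because $2\gamma > 1/2$, which is where $\gamma > 1/4$ enters, by Lemma~\ref{lem:sum-estimate} — so that $|\CF(\GG_-\varphi)_n|^2 \lesssim n^4 (k_1^2)^{3/2-2\gamma}\sum_{p+q=k_1}(p^2+q^2)^{2\gamma}|\hat\varphi_{n+1}|^2$; then $(k_1^2 + \dots + k_n^2)^{2(\gamma-1)}(k_1^2)^{3/2-2\gamma} \leqslant (k_1^2 + \dots + k_n^2)^{-1/2}$ (legitimate since $3/2 - 2\gamma > 0$), which the cut-off turns into $N_n^{-1} = L^{-1}(1+n)^{-3}$; summing over $p + q = k_1$ recovers $\|(-\LL_0)^{\gamma}\varphi_{n+1}\|^2$, and reindexing the chaos sum by $n \mapsto n+1$ together with $w(n-1) \leqslant |w|w(n)$ and $(n-1)^4/n^4 \leqslant 1$ finishes it.

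The main obstacle is the bound \eqref{eq:controlled-fct-1} for $\GG^{\succ}_-$: one must simultaneously control the singular $3/4$-loss of $(-\LL_0)$-regularity and the polynomial $\NN$-loss of $\GG_-$, uniformly in $m$, with only $(-\LL_0)^{\gamma}$-regularity available on the input even when $\gamma$ is just above $1/4$; this is what forces the hand-tuned Cauchy--Schwarz exponent $2\gamma$ and the precise cut-off $N_n = L(1+n)^3$, and the endpoint $\gamma = 1/2$ in the $\GG^{\succ}_+$ estimate is a minor additional wrinkle.
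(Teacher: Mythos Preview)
Your argument is correct and largely matches the paper's. The Neumann-series conclusion from \eqref{eq:controlled-fct-1} and the $\GG^{\succ}_-$ estimate (Cauchy--Schwarz with weight $(p^2+q^2)^{2\gamma}$, which is precisely \eqref{eq:Gminus-CS} with $\beta=3/4-\gamma$) are exactly as in the paper. For $\GG^{\succ}_+$ there is a minor difference: the paper does \emph{not} factor through \eqref{eq:Gplus-distributional}, but runs the direct Fourier computation with the change of variables $\ell_1=k_1+k_2$, $p=k_2$ and the case split $|p|\geqslant N_n/2$ versus $|\ell_{1:n-1}|_\infty\geqslant N_n/2$ from the outset, which handles all $\gamma\in(1/4,1/2]$ uniformly. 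Your black-box route is tidier for $\gamma<1/2$ but, as you note, degenerates at $\gamma=1/2$ and then needs exactly this case split as a patch (small slip: $\gamma'=3/4-\gamma\in[1/4,1/2)$, not $(1/4,1/2]$). Either way the estimate is the same.
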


\begin{proof}
  1. We start by estimating $\GG_+^{\succ}$ (which is defined like $\GG^\succ$, only with $\GG_+$ in place of $\GG$):
  \begin{align*}
    & \sum_{k_{1 : n}}  | \CF ((-\LL_0)^{\gamma - 1}
    \GG_+^{\succ} \varphi)_n (k_{1 : n}) |^2 \lesssim n^2 \sum_{k_{1 : n}}
    \1_{| k_{1 : n} |_{\infty} \geqslant N_n} \frac{(k_1 +
    k_2)^2}{(k_1^2 + \cdots + k_n^2)^{2 - 2 \gamma}}  | \hat{\varphi}_{n - 1}
    (k_1 + k_2, k_{3 : n}) |^2\\
    &\hspace{40pt} \leqslant n^2 \sum_{\ell_{1 : n - 1}, p}
    \1_{| \ell_{1 : n - 1} |_{\infty} \vee | p | \geqslant N_n/2}
    \frac{\ell_1^2}{((\ell_1 - p)^2 + p^2 + \ell_2^2 + \cdots + \ell_{n -
    1}^2)^{2 - 2 \gamma}} | \hat{\varphi}_{n - 1} (\ell_{1 : n - 1}) |^2,
  \end{align*}
  where we used the change of variables $\ell_1 = k_1 + k_2$, $p = k_2$, and
  $\ell_{2 + i} = k_{1 + i}$ for $i \geqslant 0$, and we used that $|p| \vee |\ell_1 - p| \geqslant N_n$ implies $|p| \vee |\ell_1| \geqslant N_n/2$. Since $(\ell_1 - p)^2 + p^2 \simeq \ell_1^2 + p^2$ we can replace $((\ell_1 - p)^2 + p^2 + \ell_2^2 + \cdots + \ell_{n - 1}^2)^{- (2 - 2
    \gamma)}$ by $(p^2 + \ell_1^2 + \cdots + \ell_{n - 1}^2)^{- (2 - 2
    \gamma)}$. 
  And since $1 - 2 \gamma \geqslant 0$ we have
  \begin{equation}\label{eq:controlled-fct-pr1}
  	\ell_1^2 + \cdots + \ell_{n - 1}^2 \leqslant (\ell_1^2 + \cdots + \ell_{n
     - 1}^2)^{2 \gamma} (p^2 + \ell_1^2 + \cdots + \ell_{n - 1}^2)^{1 - 2
     \gamma}.
  \end{equation}
     We now use the symmetry of $\hat{\varphi}_{n - 1} (\ell_{1 : n - 1})$ in
  $\ell_{1 : n - 1}$ and then we apply~\eqref{eq:controlled-fct-pr1} and Lemma~\ref{lem:sum-estimate}, to derive the estimate
  \begingroup
	\allowdisplaybreaks
  \begin{align*}
    &  n^2 \sum_{\ell_{1 : n - 1}, p}
    \1_{| \ell_{1 : n - 1} |_{\infty} \vee | p | \geqslant N_n/2}
    \frac{\ell_1^2}{(p^2 + \ell_1^2 + \cdots + \ell_{n - 1}^2)^{2 - 2
    \gamma}} | \hat{\varphi}_{n - 1} (\ell_{1 : n - 1}) |^2\\
    & \hspace{30pt} \lesssim n \sum_{\ell_{1 :
    n - 1}, p} \1_{| \ell_{1 : n - 1} |_{\infty} \vee | p | \geqslant
    N_n/2} \frac{\ell_1^2 + \cdots + \ell_{n - 1}^2}{(p^2 + \ell_1^2 + \cdots +
    \ell_{n - 1}^2)^{2 - 2 \gamma}} | \hat{\varphi}_{n - 1} (\ell_{1 : n -
    1}) |^2\\
    & \hspace{30pt} \leqslant n \sum_{\ell_{1
    : n - 1}, p} (\1_{| p | \geqslant N_n/2} +\1_{| \ell_{1 :
    n - 1} |_{\infty} \geqslant N_n/2}) \frac{(\ell_1^2 + \cdots + \ell_{n -
    1}^2)^{2 \gamma}}{p^2 + \ell_1^2 + \cdots + \ell_{n - 1}^2} |
    \hat{\varphi}_{n - 1} (\ell_{1 : n - 1}) |^2\\
    & \hspace{30pt} \lesssim  n \sum_{\ell_{1 :
    n - 1}} \left( \sum_{| p | \geqslant N_n/2} \frac{1}{p^2} +
    \frac{\1_{| \ell_{1 : n - 1} |_{\infty} \geqslant N_n/2}}{(\ell_1^2
    + \cdots + \ell_{n - 1}^2)^{1 / 2}} \right) (\ell_1^2 + \cdots + \ell_{n -
    1}^2)^{2 \gamma} | \hat{\varphi}_{n - 1} (\ell_{1 : n - 1}) |^2\\
    & \hspace{30pt} \lesssim  n \sum_{\ell_{1 :
    n - 1}} N_n^{- 1} (\ell_1^2 + \cdots + \ell_{n - 1}^2)^{2 \gamma} |
    \hat{\varphi}_{n - 1} (\ell_{1 : n - 1}) |^2%\\
  \end{align*}
  \endgroup
  and thus with our choice of $N_n = L (1+n)^3$
  \begin{equation}\label{eq:controlled-fct-pr2}
  	 \| w(\NN) (-\LL_0)^{\gamma} (-\LL_0)^{-1} \GG_+^\succ \varphi \| \lesssim |w| L^{-1/2} \| w(\NN) (-\LL_0)^\gamma \varphi \|.
  \end{equation}
  
  \noindent 2. Next, let us bound $\GG_-^\succ$: %Inequality \eqref{eq:Gminus-CS} with $\beta = 0$ gives
  We apply \eqref{eq:Gminus-CS} with $\beta = 3/4 - \gamma < 1/2$ (here we need $\gamma > 1/4$) to estimate
  \begin{align*}
  	\sum_{k_{1 : n}}  | \CF ((-\LL_0)^{\gamma - 1} \GG_-^{\succ} \varphi)_n (k_{1 : n}) |^2 & \lesssim \sum_{k_{1:n}} \frac{ \1_{|k_{1:n}|_\infty \geqslant N_n} n^4 k_1^2}{(k_1^2 + \cdots + k_n^2)^{2 - 2\gamma}} \left| \sum_{p+q = k_1} \hat{\varphi}_{n + 1} (p, q, k_{2 : n}) \right|^2 \\
  	& \lesssim \sum_{k_{1:n}} \frac{ \1_{|k_{1:n}|_\infty \geqslant N_n} n^4 k_1^2 (k_1^2)^{3/2 - 2\gamma-1}}{(k_1^2 + \cdots + k_n^2)^{2 - 2\gamma}}  \sum_{p + q = k_1} (p^2 + q^2)^{2\gamma } | \hat{\varphi}_{n + 1} (p, q, k_{2 : n}) |^2 \\
  	& \lesssim \sum_{k_{1:n}} \frac{ \1_{|k_{1:n}|_\infty \geqslant N_n} n^4 (k_1^2)^{3/2 }}{(k_1^2 + \cdots + k_n^2)^{2}}  \sum_{p + q = k_1} (p^2 + q^2)^{2\gamma } | \hat{\varphi}_{n + 1} (p, q, k_{2 : n}) |^2 \\
  	& \leqslant N_n^{-1} n^4 \sum_{\ell_{1:n+1}} (\ell_1^2 + \cdots + \ell_{n+1}^2)^{2\gamma } | \hat{\varphi}_{n + 1} (\ell_{1:n+1}) |^2,
  \end{align*}
  which together with $N_n = L(1+n)^3$ leads to the bound
  \begin{equation}\label{eq:controlled-fct-pr3}
  	\| w(\NN) (-\LL_0)^\gamma (-\LL_0)^{-1} \GG_-^\succ \varphi \| \lesssim |w| L^{-1/2}	 \| w(\NN) (-\LL_0)^\gamma \varphi \|.
  \end{equation}
  The claimed inequality \eqref{eq:controlled-fct-1} now follows by combining \eqref{eq:controlled-fct-pr2} and \eqref{eq:controlled-fct-pr3}.

  \noindent 3. Consequently, for given $\varphi^{\sharp}
  \in w (\NN)^{- 1} (-\LL_0)^{- \gamma} \Gamma L^2$ the map
  \[ \Psi\colon w (\NN)^{- 1} (-\LL_0)^{- \gamma} \Gamma L^2 \ni
     \psi \mapsto (-\LL_0)^{- 1} \GG^{\succ} \psi +
     \varphi^{\sharp} \in w (\NN)^{- 1} (-\LL_0)^{- \gamma}
     \Gamma L^2 \]
  satisfies for some $K > 0$
  \begin{align*}
    \| w (\NN) (-\LL_0)^{\gamma} \Psi (\psi) \| & \leqslant
    \| w (\NN) (-\LL_0)^{\gamma} (-\LL_0)^{- 1}
    \GG^{\succ} \psi \| + \| w (\NN)
    (-\LL_0)^{\gamma} \varphi^{\sharp} \|\\
    & \leqslant K |w| L^{- 1 / 2} \| w (\NN) (-\LL_0)^{\gamma}
    \psi \| + \| w (\NN) (-\LL_0)^{\gamma} \varphi^{\sharp}
    \| < \infty,
  \end{align*}
  i.e. $\Psi$ is well defined. If $L$ is large enough so that $K |w| L^{- 1 / 2}
  \leqslant 1 / 2$, then $\Psi$ is a contraction and it leaves the ball with
  radius $2 \| w (\NN) (-\LL_0)^{\gamma} \varphi^{\sharp} \|$
  invariant. Therefore, it has a unique fixed point $\KK
  \varphi^{\sharp}$ which satisfies
  \[ \| w (\NN) (-\LL_0)^{\gamma} \KK
     \varphi^{\sharp} \| \leqslant 2 \| w (\NN)
     (-\LL_0)^{\gamma} \varphi^{\sharp} \|, \]
  and then also
  \[ \| w (\NN) (-\LL_0)^{\gamma} (\KK
     \varphi^{\sharp} - \varphi^{\sharp}) \| = \| w (\NN)
     (-\LL_0)^{\gamma} (-\LL_0)^{- 1} \GG^{\succ}
     \KK \varphi^{\sharp} \| \lesssim |w| L^{- 1 / 2} \| w
     (\NN) (-\LL_0)^{\gamma} \varphi^{\sharp} \|.
  \]
\end{proof}

\begin{remark}
  \label{rmk:general-remainder}The lemma shows that for all $\varphi \in
  w (\NN)^{- 1} (-\LL_0)^{-\gamma} \Gamma L^2$ we can define
  $\varphi^{\sharp} \assign \varphi - (-\LL_0)^{- 1}
  \GG^{\succ} \varphi$ and then
  \[ \| w (\NN) (-\LL_0)^{\gamma} \varphi^{\sharp} \|
     \lesssim \| w (\NN) (-\LL_0)^{\gamma} \varphi \| . \]
  However, this only works up to $\gamma = 1 / 2$, so no matter how regular
  $\varphi$ is, the (spatial) regularity of $\varphi^{\sharp}$ is limited in
  general. The key point of Lemma~\ref{lem:controlled-fct} is that it
  identifies a class of $\varphi$ for which $\varphi^{\sharp}$ has arbitrarily
  good regularity.
\end{remark}

\begin{remark}
   The cutoff $N_n$ for which we can construct $\KK \varphi^\sharp$ depends on the weight $w$ via $|w|$; we say that the cutoff is \emph{adapted to the weight $w$} if the construction of Lemma~\ref{lem:controlled-fct} works. If we consider weights $w(n) = (1+n)^\alpha$ with $|\alpha| \leqslant K$ for a fixed $K$, then $|w|$ is uniformly bounded and we can choose one cutoff which is adapted to all those weights. This is the situation that we are mostly interested in.
\end{remark}

\begin{remark}
  \label{rmk:controlled-fct-m}The bound~(\ref{eq:controlled-fct-1}) also holds
  for $\GG^{m, \succ}$, which is defined analogously to $\GG^\succ$. Therefore, we can also construct a map $\KK^m\colon w (\NN)^{-
  1} (-\LL_0)^{- \gamma} \Gamma L^2 \rightarrow w (\NN)^{- 1}
  (-\LL_0)^{- \gamma} \Gamma L^2$ that associates to every
  $\varphi^{\sharp} \in w (\NN)^{- 1} (-\LL_0)^{- \gamma}
  \Gamma L^2$ a unique $\KK^m \varphi^{\sharp} \in w (\NN)^{-
  1} (-\LL_0)^{- \gamma} \Gamma L^2$ with
  \[ \KK^m \varphi^{\sharp} = (-\LL_0)^{- 1}
     \GG^{m, \succ} \KK^m \varphi^{\sharp} +
     \varphi^{\sharp} . \]
\end{remark}

Let us write $\GG^\prec = \GG - \GG^\succ$. The following proposition controls $\LL\KK \varphi^{\sharp}$
in terms of $\varphi^{\sharp}$ and it is formulated in the limit $m
\rightarrow \infty$. But by Remark~\ref{rmk:controlled-fct-m} it is clear that
similar bounds hold for $\LL^m \KK^m \varphi^{\sharp}$,
uniformly in $m$.

\begin{proposition}\label{prop:domain}
   Let $w$ be a weight, let $\gamma \geqslant 0$, and let the cutoff $N_n$ be adapted to $w$ and $(w(n)(1+n)^{9/2+7\gamma})_n$, and let $\delta > 0$. Consider
  \[ \varphi^{\sharp} \in w (\NN)^{- 1} (-\LL_0)^{- 1}  \Gamma
     L^2 \cap w(\NN)^{- 1} (1 + \NN)^{- 9/2 - 7\gamma} (-\LL_0)^{- 1 / 4 - \delta}  \Gamma L^2 . \]
  We set $\varphi \assign \KK \varphi^{\sharp}$. Then $\LL
  \varphi \assign \LL_0 \varphi^{\sharp} +\GG^{\prec}
  \varphi$ is a well defined operator and we have
  \begin{equation} \label{eq:domain} \| w (\NN) (-\LL_0)^\gamma \GG^\prec \varphi \| \lesssim
      \| w (\NN) (1 + \NN)^{9/2+7\gamma}
     (-\LL_0)^{1 / 4 + \delta} \varphi^\sharp \|.
  \end{equation}
  Obviously we also have $\| w(\NN) \LL_0 \varphi^\sharp \| = \| w(\NN) (-\LL_0) \varphi^\sharp \|$.
\end{proposition}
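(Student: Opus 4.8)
The plan is to use the controlled structure $\varphi = \KK\varphi^\sharp = \varphi^\sharp + \varphi^\succ$, with $\varphi^\succ = (-\LL_0)^{-1}\GG^\succ\varphi$, to reduce the statement to a single quantitative bound for $\GG^\prec\varphi$ alone; to prove that bound directly in Fourier coordinates, exploiting that the cutoff $N_n = L(1+n)^3$ forces the ``output'' frequencies of $\GG^\prec_\pm$ to stay small; and finally to pass from $\varphi$ back to $\varphi^\sharp$ via Lemma~\ref{lem:controlled-fct}. First I would set up the reduction: since $\varphi^\sharp$ in particular lies in $w(\NN)^{-1}(-\LL_0)^{-1/2}\Gamma L^2$ and $N_n$ is adapted to $w$, the function $\varphi = \KK\varphi^\sharp$ is well defined, and from $\LL_0\varphi^\succ = \LL_0(-\LL_0)^{-1}\GG^\succ\varphi = -\GG^\succ\varphi$ one gets (at least formally) $\LL_0\varphi + \GG\varphi = \LL_0\varphi^\sharp + (\GG - \GG^\succ)\varphi = \LL_0\varphi^\sharp + \GG^\prec\varphi$, which is exactly the definition of $\LL\varphi$; this is unambiguous because $\varphi^\sharp = \varphi - (-\LL_0)^{-1}\GG^\succ\varphi$ is recovered from $\varphi$. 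The summand $\LL_0\varphi^\sharp$ is controlled by hypothesis, so everything reduces to~\eqref{eq:domain} for $\GG^\prec\varphi$, the case $\gamma = 0$ in addition showing $\GG^\prec\varphi \in \Gamma L^2$, i.e.\ that $\LL\varphi$ is well defined. I would also observe that we may assume $\delta \leqslant 1/4$: for larger $\delta$ the hypotheses are only stronger and $(-\LL_0)^{1/2} \leqslant (-\LL_0)^{1/4+\delta}$ as operators (the spectrum of $-\LL_0$ on $L^2_0$ is bounded below by $(2\pi)^2 > 1$), so the case $\delta = 1/4$ implies the rest.

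The core of the argument is to prove $\| w(\NN)(-\LL_0)^\gamma \GG^\prec\varphi\| \lesssim \| w(\NN)(1+\NN)^{9/2+7\gamma}(-\LL_0)^{1/4+\delta}\varphi\|$ for every $\varphi$ with finite right-hand side, with implicit constant depending on $|w|$ and $L$; I would work in Fourier variables with the (possibly non-symmetric) kernels, since symmetrising does not increase the $L^2(\T^n)$ norm, and split $\GG^\prec = \GG^\prec_+ + \GG^\prec_-$, where $\CF(\GG^\prec_\pm\varphi)_n(k_{1:n}) = \1_{|k_{1:n}|_\infty < N_n}\CF(\GG_\pm\varphi)_n(k_{1:n})$. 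In both cases $\CF(\GG^\prec_\pm\varphi)_n$ is supported on $|k_{1:n}|_\infty < N_n$, so $(k_1^2 + \dots + k_n^2)^{2\gamma} \leqslant (nN_n^2)^{2\gamma}$. For $\GG^\prec_+$ all arguments $(k_1 + k_2, k_3, \dots, k_n)$ of $\hat\varphi_{n-1}$ then have modulus $< 2N_n$, and I would factor $(k_1+k_2)^2 = |k_1+k_2|^{1-4\delta}\,|k_1+k_2|^{1+4\delta}$, bound the first factor by $\lesssim N_n^{1-4\delta}$ and the second by $\big((k_1+k_2)^2 + k_3^2 + \dots + k_n^2\big)^{1/2+2\delta}$, and sum over the at most $2N_n$ pairs $(k_1,k_2)$ with a fixed sum, which yields
\[
  \sum_{k_{1:n}} (k_1^2 + \dots + k_n^2)^{2\gamma} |\CF(\GG^\prec_+\varphi)_n(k_{1:n})|^2 \lesssim (n-1)^2 (nN_n^2)^{2\gamma} N_n^{2-4\delta}\, \|((-\LL_0)^{1/4+\delta}\varphi)_{n-1}\|_{L^2(\T^{n-1})}^2 .
\]
For $\GG^\prec_-$ the sum $\sum_{p+q = k_1}\hat\varphi_{n+1}(p,q,k_{2:n})$ ranges over unbounded $p,q$, so I would apply the Cauchy--Schwarz bound~\eqref{eq:Gminus-CS} with $\beta = 1/2 - \delta < 1/2$ (legitimate since $\delta > 0$), estimate $k_1^2(k_1^2)^{2\beta-1} = (k_1^2)^{1-2\delta} \leqslant N_n^{2-4\delta}$ using $|k_1| < N_n$, sum over $p+q = k_1$ and then over $k_1$ to recover all of $(p,q)$, and use the symmetry of $\hat\varphi_{n+1}$ to replace $(p^2+q^2)^{1/2+2\delta}$ by $(p^2+q^2+k_2^2+\dots+k_n^2)^{1/2+2\delta}$, obtaining
\[
  \sum_{k_{1:n}} (k_1^2 + \dots + k_n^2)^{2\gamma} |\CF(\GG^\prec_-\varphi)_n(k_{1:n})|^2 \lesssim n^2(n+1)^2 (nN_n^2)^{2\gamma} N_n^{2-4\delta}\, \|((-\LL_0)^{1/4+\delta}\varphi)_{n+1}\|_{L^2(\T^{n+1})}^2 .
\]
Multiplying by $n!\, w(n)^2$, summing over $n$, shifting the chaos index ($n \mapsto n+1$ for $\GG^\prec_+$, which multiplies $n!$ by a factor $(1+n)$, and $n \mapsto n-1$ for $\GG^\prec_-$, which divides it by $(1+n)$), and using $w(n\pm1) \leqslant |w|\, w(n)$ together with $N_{n\pm1} \simeq L(1+n)^3$, the powers of $(1+n)$ collect in both cases to exactly $(1+n)^{9 + 14\gamma - 12\delta} \leqslant (1+n)^{9+14\gamma}$ while the powers of $L$ stay bounded since $L \geqslant 1$; this proves the core bound.

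To finish I would combine the core bound (applied to $\varphi = \KK\varphi^\sharp$) with Lemma~\ref{lem:controlled-fct} applied to the weight $\tilde w(n) := w(n)(1+n)^{9/2+7\gamma}$ — which is again a weight and to which $N_n$ is adapted by hypothesis — with the Lemma's parameter equal to $1/4 + \delta \in (1/4, 1/2]$: by~\eqref{eq:controlled-fct-estimate}, $\|\tilde w(\NN)(-\LL_0)^{1/4+\delta}\KK\varphi^\sharp\| \lesssim \|\tilde w(\NN)(-\LL_0)^{1/4+\delta}\varphi^\sharp\|$, and the right-hand side is precisely the right-hand side of~\eqref{eq:domain}, finite by assumption; since the right-hand side of the core bound is $\|\tilde w(\NN)(-\LL_0)^{1/4+\delta}\varphi\|$, this gives~\eqref{eq:domain}. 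I expect the main obstacle to be the power counting in the core bound: the exponent $9/2 + 7\gamma$ comes out with essentially no slack (only the $12\delta$ gained from $\delta > 0$), which is why the cutoff must be taken precisely of cubic order in $n$ and why one must use the sharp Cauchy--Schwarz inequality~\eqref{eq:Gminus-CS} — the device that trades a $(-\LL_0)^{3/4}$ loss in $\GG_-$ for only $(-\LL_0)^{1/4+\delta}$ plus polynomial powers of $\NN$ — with $\beta$ as close to $1/2$ as $\delta$ allows; by comparison the algebraic cancellation and the fixed-point estimate of Lemma~\ref{lem:controlled-fct} are routine.
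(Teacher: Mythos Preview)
Your proof is correct and follows essentially the same approach as the paper: the reduction to $\delta\in(0,1/4]$, the separate Fourier-side treatment of $\GG^\prec_+$ and $\GG^\prec_-$ (the latter via the Cauchy--Schwarz bound~\eqref{eq:Gminus-CS} with $\beta=1/2-\delta$), the exploitation of the cutoff $|k_{1:n}|_\infty<N_n\simeq(1+n)^3$ to trade spatial regularity for polynomial growth in $\NN$, and the final passage from $\varphi$ to $\varphi^\sharp$ via Lemma~\ref{lem:controlled-fct} applied to the weight $w(n)(1+n)^{9/2+7\gamma}$ are exactly what the paper does. Your power counting is even slightly more careful (you keep track of the $-12\delta$ gain that the paper drops), and your remark that for $\GG^\prec_-$ one needs ``symmetry'' to replace $(p^2+q^2)^{1/2+2\delta}$ by the full sum is a harmless overstatement---monotonicity alone suffices there.
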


\begin{proof}
  We treat $\GG^\prec_+$ and $\GG^\prec_-$ separately (both with their obvious definition). We also assume that $\delta \in (0,1/4]$, but once we established the bound~\eqref{eq:domain} for such $\delta$ it holds of course also for $\delta > 1/4$.
  
  \noindent 1. To control $\GG^\prec_+ \varphi$, we bound
    \begin{align*}
    & \sum_{k_{1 : n}} (k_1^2 + \cdots + k_n^2)^{2 \gamma} | \CF
    (\GG_+^{\prec} \varphi)_n (k_{1 : n}) |^2\\
    & \lesssim n^2 \sum_{k_{1 : n}} \1_{| k_{1 : n} |_{\infty} <
    N_n} (k_1^2 + \cdots + k_n^2)^{2 \gamma} | k_1 + k_2 |^2 |
    \hat{\varphi}_{n - 1} (k_1 + k_2, k_{3 : n}) |^2\\
    & \lesssim n^2 \sum_{k_{1 : n}} \1_{| k_{1 : n} |_{\infty} < N_n} N_n^{4\gamma} n^{2\gamma} (|k_1+k_2|^2)^{1/2 + 2\delta} N_n^{1-4\delta} | \hat{\varphi}_{n - 1} (k_1+k_2, k_{2 : n - 1}) |^2\\
    & \lesssim n^{2+2\gamma} N_n^{2+4\gamma - 4\delta} \sum_{k_{1 : n - 1}} (k_1^2 + \cdots + k_{n - 1}^2)^{1/2+2\delta} |\hat{\varphi}_{n - 1} (k_{1 : n - 1}) |^2,
  \end{align*}
  and since $N_n \simeq (n + 1)^3$ we get $\| w (\NN) \GG_+^{\prec} \varphi \| \lesssim \| w(\NN) (1 + \NN)^{9/2+7\gamma} (-\LL_0)^{1 / 4 + \delta} \varphi \|$. With Lemma~\ref{lem:controlled-fct} we can estimate the right hand side by $\| w(\NN) (1 + \NN)^{9/2 + 7\gamma} (-\LL_0)^{1 / 4 + \delta} \varphi^\sharp \|$, because we assumed that $\delta \in (0,1/4]$.

\noindent 2. Next, let us estimate $\GG^\prec_- \varphi$. As usual we apply~\eqref{eq:Gminus-CS}, this time with $\beta = 1/2 - \delta$, to bound
\begin{align*}
	& \sum_{k_{1 : n}} (k_1^2 + \cdots + k_n^2)^{2 \gamma} | \CF
    (\GG_-^{\prec} \varphi)_n (k_{1 : n}) |^2 \\
    & \lesssim (n+1)^4 \sum_{k_{1 : n}} \1_{|k_{1:n}|_\infty < N_n} k_1^2 (k_1^2 + \cdots + k_n^2)^{2 \gamma} \bigg| \sum_{p+q=k_1} \hat{\varphi}_{n+1} (p,q, k_{2 : n}) \bigg|^2 \\
    & \lesssim (n+1)^4 \sum_{k_{1 : n}} \1_{|k_{1:n}|_\infty < N_n} k_1^2 (k_1^2 + \cdots + k_n^2)^{2 \gamma} |k_1|^{-4\delta} \sum_{p+q=k_1} (p^2+q^2)^{1/2+2\delta} |\hat{\varphi}_{n+1} (p,q, k_{2 : n})|^2 \\
    & \lesssim (n+1)^{4+2\gamma} N_n^{2+4\gamma} \sum_{\ell_{1:n+1}}  (\ell_1^2 + \dots + \ell_{n+1}^2)^{1/2+2\delta} |\hat{\varphi}_{n+1} (\ell_{1:n+1})|^2,
\end{align*}
from where we deduce as before that $\| w (\NN) \GG_-^{\prec} \varphi \| \lesssim \| w(\NN) (\NN + 1)^{9/2 + 7\gamma} (-\LL_0)^{1/4+\delta} \varphi^\sharp \|$.
\end{proof}

To simplify the notation we write from now for $\gamma \geqslant 0$
\begin{equation}
	\alpha(\gamma) := 9/2 + 7 \gamma.
\end{equation}

\begin{lemma}\label{lem:dom-def}
 For a given weight $w$ and a cutoff as in Proposition~\ref{prop:domain} (for $\gamma = 0$) we set
  \[ \CD_w (\LL) \assign \{ \KK \varphi^{\sharp} :
     \varphi^{\sharp} \in w (\NN)^{- 1} (-\LL_0)^{- 1} \Gamma
     L^2 \cap w (\NN)^{- 1} (1 + \NN)^{- 9/2}
     (-\LL_0)^{- 1 / 2} \Gamma L^2 \} . \]
  Then $\CD_w (\LL)$ is dense in $w (\NN)^{- 1}
  \Gamma L^2$. More precisely, for all
  \[
  		\psi \in w (\NN)^{- 1} (-\LL_0)^{- 1} \Gamma L^2 \cap w (\NN)^{- 1} (1 + \NN)^{- 9 / 2} (-\LL_0)^{- 1 / 2} \Gamma L^2
  \]
  and for all $M \geqslant 1$ there exists a $\varphi^M \in \CD_w(\LL)$ such that
  \begin{equation}\label{eq:quantitative-density} 
  \begin{aligned}
    \| w (\NN) (-\LL_0)^{1/2} (\varphi^M - \psi) \| & \lesssim M^{- 1 / 2} \| w
    (\NN) (-\LL_0)^{1/2} \psi \|,\\ 
    \| w (\NN) (-\LL_0)^{1 / 2} \varphi^M \| & \lesssim \| w
    (\NN) (-\LL_0)^{1 / 2} \psi \|, \\
    \| w (\NN) \LL \varphi^M \| & \lesssim M^{1 / 2} (\| w
    (\NN) (-\LL_0) \psi \| + \| w (\NN) (\NN+
    1)^{9 / 2} (-\LL_0)^{1 / 2} \psi \|).
  \end{aligned}
  \end{equation}
  If $w \equiv 1$ we simply write $\CD(\LL)$.
\end{lemma}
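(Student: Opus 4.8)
The plan is to produce $\varphi^M$ from the controlled ansatz~\eqref{eq:controlled} applied to a suitably truncated version of $\psi$. First I would record the reduction. Writing $\psi^{\sharp} := \psi - (-\LL_0)^{-1}\GG^{\succ}\psi$, Remark~\ref{rmk:general-remainder} gives $\psi^{\sharp}\in w(\NN)^{-1}(-\LL_0)^{-1/2}\Gamma L^2$ with $\|w(\NN)(-\LL_0)^{1/2}\psi^{\sharp}\|\lesssim\|w(\NN)(-\LL_0)^{1/2}\psi\|$, and one checks directly from the fixed point equation that $\KK\psi^{\sharp}=\psi$: indeed $\psi$ solves $\varphi=(-\LL_0)^{-1}\GG^{\succ}\varphi+\psi^{\sharp}$ in $w(\NN)^{-1}(-\LL_0)^{-1/2}\Gamma L^2$, on which $\KK$ is a bijection by Lemma~\ref{lem:controlled-fct}. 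The point is that $\psi^{\sharp}$ need not lie in $w(\NN)^{-1}(-\LL_0)^{-1}\Gamma L^2$: the spatial regularity of the paracontrolled piece $(-\LL_0)^{-1}\GG^{\succ}\psi$ is capped strictly below $1$ by the $\GG^{\succ}_+$ contribution (which introduces a free summation variable), whatever the regularity of $\psi$, so $\psi\notin\CD_w(\LL)$ in general and a genuine approximation is needed.

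For the approximation, for $M\geqslant 1$ I would set $\varphi^{M,\sharp} := \psi - \1_{-\LL_0\leqslant M}(-\LL_0)^{-1}\GG^{\succ}\psi$, i.e.\ keep $\psi$ and replace its correction by the spectral truncation at scale $M$. Then $(-\LL_0)\varphi^{M,\sharp} = (-\LL_0)\psi - \1_{-\LL_0\leqslant M}\GG^{\succ}\psi$, whose second term has finite spectral support because $\1_{-\LL_0\leqslant M}$ bounds the free summation variable of $\GG^{\succ}_+$; hence $\varphi^{M,\sharp}\in w(\NN)^{-1}(-\LL_0)^{-1}\Gamma L^2$ (with an $M$-dependent norm). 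Its $w(\NN)(1+\NN)^{9/2}(-\LL_0)^{1/2}$-norm is controlled by that of $\psi$ through~\eqref{eq:controlled-fct-1} applied with the weight $n\mapsto w(n)(1+n)^{9/2}$, to which the cutoff $N_n$ is adapted by the hypothesis of the lemma. So $\varphi^{M,\sharp}$ lies in the intersection defining $\CD_w(\LL)$ and $\varphi^M := \KK\varphi^{M,\sharp}\in\CD_w(\LL)$.

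For the three bounds I would argue as follows. Linearity of $\KK$ gives $\varphi^M-\psi=\KK(\varphi^{M,\sharp}-\psi^{\sharp})=\KK\bigl(\1_{-\LL_0>M}(-\LL_0)^{-1}\GG^{\succ}\psi\bigr)$, so by~\eqref{eq:controlled-fct-estimate} the first bound reduces to estimating $\|w(\NN)\,\1_{-\LL_0>M}(-\LL_0)^{-1/2}\GG^{\succ}\psi\|$ by $M^{-1/2}\|w(\NN)(-\LL_0)^{1/2}\psi\|$, which I would obtain by rerunning the computations behind Lemma~\ref{lem:controlled-fct}/Lemma~\ref{lem:G-apriori} while carrying the extra restriction $-\LL_0>M$ and reading off the power of $M$ it produces together with the internal cutoff $N_n$. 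The second bound is immediate from~\eqref{eq:controlled-fct-estimate} and~\eqref{eq:controlled-fct-1}: $\|w(\NN)(-\LL_0)^{1/2}\varphi^M\|\lesssim\|w(\NN)(-\LL_0)^{1/2}\varphi^{M,\sharp}\|\lesssim\|w(\NN)(-\LL_0)^{1/2}\psi\|$, since the truncation is a contraction. For the third, $\LL\varphi^M=\LL_0\varphi^{M,\sharp}+\GG^{\prec}\varphi^M$; here $\|w(\NN)(-\LL_0)\varphi^{M,\sharp}\|\leqslant\|w(\NN)(-\LL_0)\psi\|+\|w(\NN)\1_{-\LL_0\leqslant M}\GG^{\succ}\psi\|$, and on the slab $-\LL_0\leqslant M$ one trades a half-derivative for $M^{1/2}$, so the last term is $\lesssim M^{1/2}\|w(\NN)(-\LL_0)^{-1/2}\GG^{\succ}\psi\|\lesssim M^{1/2}\|w(\NN)(-\LL_0)^{1/2}\psi\|$ by~\eqref{eq:controlled-fct-1}; while Proposition~\ref{prop:domain} (with $\gamma=0$) bounds $\|w(\NN)\GG^{\prec}\varphi^M\|$ by $\|w(\NN)(1+\NN)^{9/2}(-\LL_0)^{1/4+\delta}\varphi^{M,\sharp}\|\lesssim\|w(\NN)(1+\NN)^{9/2}(-\LL_0)^{1/2}\psi\|$. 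Summing and using $M^{1/2}\geqslant 1$ gives the third estimate. Finally, density of $\CD_w(\LL)$ in $w(\NN)^{-1}\Gamma L^2$ follows, since the intersection space contains all finite-chaos, finitely supported functions and is therefore dense, and $\|w(\NN)(\varphi^M-\psi)\|\lesssim\|w(\NN)(-\LL_0)^{1/2}(\varphi^M-\psi)\|\to 0$.

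The main obstacle is the quantitative first estimate. Because $\psi^{\sharp}$, equivalently the correction $(-\LL_0)^{-1}\GG^{\succ}\psi$, has only sub-unit spatial regularity, a crude truncation loses the $M^{-1/2}$ rate; one has to extract it by combining the outer truncation $-\LL_0>M$ with the frequency cutoff $N_n$ built into $\GG^{\succ}$, and by using simultaneously the two regularity resources of $\psi$ — membership in $w(\NN)^{-1}(-\LL_0)^{-1}\Gamma L^2$ and in $w(\NN)^{-1}(1+\NN)^{-9/2}(-\LL_0)^{-1/2}\Gamma L^2$ — all while keeping the $\NN$-dependence uniform. The remaining steps are bookkeeping with the a priori bounds already established.
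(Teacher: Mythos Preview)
Your construction differs from the paper's, and the gap you flag as ``the main obstacle'' is real: with the global spectral truncation $\1_{-\LL_0\leqslant M}$ you cannot obtain the $M^{-1/2}$ rate in the first inequality. To see why, follow the proof of Lemma~\ref{lem:controlled-fct} for $\GG_+^\succ$ at $\gamma=1/2$ with the extra constraint $p^2+|\ell|^2\gtrsim M$. After the usual change of variables the quantity to be controlled (per fixed $\ell$) is
\[
n^2\sum_{p}\1_{|\ell|_\infty\vee|p|\geqslant N_n/2}\,\1_{p^2+|\ell|^2\gtrsim M}\,\frac{1}{p^2+|\ell|^2},
\]
and you would need this to be $\lesssim M^{-1}$ uniformly in $n$ and $\ell$. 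But in the regime $|p|\geqslant N_n/2$, $p^2\lesssim M$, $|\ell|^2\gtrsim M$, the sum over $p$ yields at best $\min(N_n^{-1},\,|\ell|^{-1})\lesssim M^{-1/2}$, leaving $n^2 M^{-1/2}$, which is not $\lesssim M^{-1}$ for large $n$. Invoking the extra regularity $\psi\in w(\NN)^{-1}(-\LL_0)^{-1}\Gamma L^2$ or Lemma~\ref{lem:controlled-fct-adapted} only recovers a weaker rate (roughly $M^{-1/4+}$) together with an unwanted polynomial weight in $\NN$ on the right-hand side. The structural issue is that an $n$-independent spectral threshold does not scale with the built-in cutoff $N_n\simeq(1+n)^3$ of $\GG^\succ$, so the two cutoffs cannot be combined to produce a clean power of $M$ uniformly in the chaos index.

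The paper avoids this by a different construction: it does \emph{not} define $\varphi^{M,\sharp}$ first and then set $\varphi^M=\KK\varphi^{M,\sharp}$. Instead it applies Lemma~\ref{lem:controlled-fct} directly with the scaled, $n$-dependent cutoff $MN_n$ in place of $N_n$ (equivalently $L\to ML$), so that $\varphi^M$ is the fixed point of
\[
\hat\varphi_n(k_{1:n})=\1_{|k|_\infty\geqslant MN_n}\,\CF\bigl((-\LL_0)^{-1}\GG\varphi\bigr)_n(k_{1:n})+\hat\psi_n(k_{1:n}).
\]
The first two inequalities of~\eqref{eq:quantitative-density} are then immediate from~\eqref{eq:controlled-fct-estimate}, with $L^{-1/2}$ replaced by $(ML)^{-1/2}\simeq M^{-1/2}$. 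Only afterwards does one read off $\varphi^{M,\sharp}=\psi-\1_{N_n\leqslant|k|_\infty<MN_n}(-\LL_0)^{-1}\GG\varphi^M$, and the annular support $N_n\leqslant|k|_\infty<MN_n$ is precisely what produces the $M^{1/2}$ in the third estimate via the $m$-dependent bound~\eqref{eq:G-m-dependent} (with effective cutoff $MN_n$, the factor $N_n^{1/2}\simeq(1+n)^{3/2}$ being absorbed into the weight). Your treatment of the second and third estimates is close in spirit and would go through once $\varphi^M$ is built this way; the decisive difference is in how $\varphi^M$ is constructed.
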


\begin{proof}
  Let $\psi$ be as in the statement of the lemma. Since such $\psi$ are dense in $w (\NN)^{- 1} \Gamma L^2$ it
  suffices to construct $\varphi^M$ such that the inequalities~\eqref{eq:quantitative-density} hold. For this purpose we apply Lemma~\ref{lem:controlled-fct} to find a unique function $\varphi^M
  \in w (\NN)^{- 1} \Gamma L^2$ that satisfies
  \[ \hat{\varphi}_n^M (k_{1 : n}) = \1_{| k |_{\infty} \geqslant M
     N_n} \CF((-\LL_0)^{-1} \GG \varphi^M)_n(k_{1:n}) + \hat{\psi}_n
     (k_{1 : n}), \]
  and for which the first two estimates in~\eqref{eq:quantitative-density} hold by Lemma~\ref{lem:controlled-fct}. To see that $\varphi^M \in \CD_w (\LL)$ note
  that
  \[ \hat{\varphi}_n^M (k_{1 : n}) = \CF((-\LL_0)^{-1} \GG^\succ \varphi^M)_n(k_{1:n}) +
     \hat{\varphi}_n^{M,\sharp} (k_{1 : n}), \]
  where
  \[ \hat{\varphi}_n^{M,\sharp} (k_{1 : n}) = \hat{\psi}_n (k_{1 : n})
     - \1_{N_n \leqslant | k |_{\infty} < M N_n} \CF( (-\LL_0)^{-1} \GG \varphi^M)_n(k_{1:n}). \]
  In particular we have $\LL \varphi^M = \GG^\prec \varphi^M + \LL_0 \varphi^{M,\sharp}$, and by Proposition~\ref{prop:domain} it suffices to estimate $\varphi^{M,\sharp}$ in $w (\NN)^{- 1} (-\LL_0)^{- 1}  \Gamma L^2 \cap w(\NN)^{- 1} (1 + \NN)^{- 9/2} (-\LL_0)^{- 1 / 2}  \Gamma L^2$. The first contribution $\psi$ satisfies the required bounds by assumption, so it suffices to show that the second contribution, denote it as $\psi^M$,
  satisfies
  \begin{equation}\label{eq:psiM-estimate}
  \begin{aligned}
  		\| w (\NN) (-\LL_0) \psi^M \| & \lesssim M^{1 / 2}  \| w (\NN) (1 + \NN)^{9 / 2} (-\LL_0)^{1 / 2} \psi \|,  \\
     	\| w (\NN) (\NN+1)^{9/2} (-\LL_0)^{1 / 2} \psi^M \| & \lesssim \| w (\NN) (1+\NN)^{9/2} (-\LL_0)^{1 / 2} \psi \|.
  \end{aligned}
  \end{equation}
  But
  \begin{align*}
  	\CF((-\LL_0) \psi^M)_n(k_{1:n}) = - \1_{N_n \leqslant | k |_{\infty} < M N_n} \CF( \GG \varphi^M)_n(k_{1:n}),
  \end{align*}
  so that we can estimate this term similarly as in~\eqref{eq:G-m-dependent}. If the cutoff $M N_n$ was independent of $n$, we would get $\| w(\NN) (-\LL_0) \psi^M \| \lesssim (M N_n)^{1/2} \| w(\NN) (1+\NN)(-\LL_0)^{1/2} \varphi^M \|$ from~\eqref{eq:G-m-dependent}, so after including the factor $N_n \simeq (1+n)^3$ into the weight we get
  \[
  	\| w(\NN) (-\LL_0) \psi^M \| \lesssim M^{1/2} \| w(\NN) (1+\NN)^{5/2} (-\LL_0)^{1/2} \varphi^M \|,
  \]
  and then the first estimate of~\eqref{eq:psiM-estimate} follows from~\eqref{eq:controlled-fct-estimate}. Similarly
  \begin{align*}
  	|\CF((-\LL_0)^{1/2} \psi^M)_n(k_{1:n})|^2 & \simeq (k_1^2+\dots+k_n^2)^{-1} \1_{N_n \leqslant | k |_{\infty} < M N_n} |\CF( \GG \varphi^M)_n(k_{1:n})|^2 \\
  	& \lesssim N_n^{-1} |\CF((-\LL_0)^{-1/4} \GG_- \varphi^M)_n(k_{1:n})|^2 \\
  	&\quad + N_n^{-2/3} |\CF((-\LL_0)^{-1/3} \GG_+ \varphi^M)_n(k_{1:n})|^2,
  \end{align*}
  and since $N_n \simeq (1+n)^3$ we get with~\eqref{eq:Gminus-distributional}, \eqref{eq:Gplus-distributional} that
  \[
  	\| w(\NN) (1+\NN)^{9/2} (-\LL_0)^{1/2} \psi^M \| \lesssim \| w(\NN) [(1+\NN)^{4} (-\LL_0)^{1/2} + (1 + \NN)^{9/2} (-\LL_0)^{-5/12}] \varphi^M \|,
  \]
  which together with (\ref{eq:controlled-fct-estimate}) yields~\eqref{eq:psiM-estimate} and then~\eqref{eq:quantitative-density}.
\end{proof}

\begin{remark}
As discussed before, the analysis above works also for $\LL^m$ and we define
\[ \CD_w (\LL^m) \assign \{ \KK^m \varphi^{\sharp} :
   \varphi^{\sharp} \in w (\NN)^{- 1} (-\LL_0)^{- 1} \Gamma
   L^2 \cap w (\NN)^{- 1} (1 + \NN)^{- 9 / 2}
   (-\LL_0)^{- 1 / 2} \Gamma L^2 \} . \]
\end{remark}

\begin{remark}
	The same construction works for the operator $\LL^{(\lambda)} = \LL_0 + \lambda \GG$ for $\lambda \in \R$. For $\lambda \neq 1$ the intersection of the resulting domain $\CD(\LL^{(\lambda)})$ with $\CD(\LL)$ consists only of constants.
\end{remark}

\begin{lemma}\label{lem:dissipative}
For any $\varphi\in\CD(\LL)$ we have
\[
\langle \varphi, \LL \varphi \rangle= - \| (-\LL_0)^{1/2} \varphi \|^2 \leqslant 0,
\]
in particular the operator $(\LL,\CD(\LL))$ is dissipative.
\end{lemma}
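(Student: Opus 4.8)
The plan is to show that on $\CD(\LL)$ the generator agrees with the formal sum $\LL_0 + \GG$, that the $\LL_0$-part produces exactly $-\|(-\LL_0)^{1/2}\varphi\|^2$, and that the drift part $\GG$ contributes nothing because it is antisymmetric in $L^2(\mu)$; dissipativity then follows at once.

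First I would unfold the definition of $\CD(\LL)$. Writing $\varphi = \KK\varphi^\sharp$ (so that $\|(-\LL_0)\varphi^\sharp\| + \|(1+\NN)^{9/2}(-\LL_0)^{1/2}\varphi^\sharp\| < \infty$) and $\varphi^\succ := \varphi - \varphi^\sharp = (-\LL_0)^{-1}\GG^\succ\varphi$, one has $\LL_0\varphi^\succ = -\GG^\succ\varphi$, hence $\LL_0\varphi^\sharp = \LL_0\varphi + \GG^\succ\varphi$, and therefore, by the definition of $\LL\varphi$ in Proposition~\ref{prop:domain},
\[
\LL\varphi = \LL_0\varphi^\sharp + \GG^\prec\varphi = \LL_0\varphi + \GG^\succ\varphi + \GG^\prec\varphi = \LL_0\varphi + \GG\varphi .
\]
Then I would record that all the pairings below are finite: since $(1+\NN)^{9/2}(-\LL_0)^{1/2}\varphi^\sharp \in \Gamma L^2$, Lemma~\ref{lem:controlled-fct} gives $(1+\NN)^{9/2}(-\LL_0)^{1/2}\varphi \in \Gamma L^2$ as well; in particular $\langle\varphi,\LL_0\varphi\rangle = -\|(-\LL_0)^{1/2}\varphi\|^2$ is finite, and by the a priori bounds of Lemma~\ref{lem:G-apriori} we have $\GG_+\varphi \in (-\LL_0)^{1/2}\Gamma L^2$ and $\GG_-\varphi \in (-\LL_0)^{1/4}\Gamma L^2$ with matching control in $\NN$, so that $\langle\varphi,\GG_+\varphi\rangle$, $\langle\varphi,\GG_-\varphi\rangle$ and $\langle\varphi,\GG\varphi\rangle$ are all finite by Cauchy--Schwarz. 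Hence $\langle\varphi,\LL\varphi\rangle = -\|(-\LL_0)^{1/2}\varphi\|^2 + \langle\varphi,\GG\varphi\rangle$, and it remains to prove that $\langle\varphi,\GG\varphi\rangle = 0$.

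For the vanishing I would expand $\varphi = \sum_n W_n(\varphi_n)$. Because $\GG_+$ raises and $\GG_-$ lowers the chaos degree by one, and the three series in question converge absolutely, orthogonality of the chaoses yields
\[
\langle\varphi,\GG\varphi\rangle = \sum_{n\geq 0}\langle W_{n+1}(\varphi_{n+1}),\GG_+ W_n(\varphi_n)\rangle + \sum_{n\geq 0}\langle W_{n-1}(\varphi_{n-1}),\GG_- W_n(\varphi_n)\rangle .
\]
Passing to the limit $m\to\infty$ in the adjointness identity of Lemma~\ref{lem:Gm} (the kernels converge pointwise in Fourier and are dominated, and for fixed chaos degree all quantities are finite) together with the symmetry of the real $L^2(\mu)$ inner product gives $\langle W_{n+1}(\varphi_{n+1}),\GG_+ W_n(\varphi_n)\rangle = -\langle W_n(\varphi_n),\GG_- W_{n+1}(\varphi_{n+1})\rangle$; reindexing the first sum by $n\mapsto n+1$ then shows that the two sums cancel, so $\langle\varphi,\GG\varphi\rangle = 0$. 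Combining, $\langle\varphi,\LL\varphi\rangle = -\|(-\LL_0)^{1/2}\varphi\|^2 \leq 0$, and dissipativity follows since then $\|(\lambda-\LL)\varphi\|^2 = \lambda^2\|\varphi\|^2 - 2\lambda\langle\varphi,\LL\varphi\rangle + \|\LL\varphi\|^2 \geq \lambda^2\|\varphi\|^2$ for every $\lambda > 0$.

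The main obstacle is the regularity bookkeeping: $\GG\varphi$ is not an $L^2(\mu)$ function but only an element of $(-\LL_0)^{1/2}\Gamma L^2$, so $\langle\varphi,\GG\varphi\rangle$ has to be read as a duality pairing and the splitting of the chaos series into $\GG_\pm$-contributions has to be justified term by term. This is precisely what the extra $\NN$-regularity built into $\CD(\LL)$ -- and propagated from $\varphi^\sharp$ to $\varphi = \KK\varphi^\sharp$ by Lemma~\ref{lem:controlled-fct} -- together with the a priori estimates of Lemma~\ref{lem:G-apriori} deliver. One also has to check that the adjointness of Lemma~\ref{lem:Gm}, proved there on cylinder functions and for the truncated operators $\GG^m_\pm$, survives the passage $m\to\infty$, which again uses the uniform-in-$m$ bounds. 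I expect this to be the only, and essentially routine, difficulty.
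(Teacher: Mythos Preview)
Your proof is correct and follows essentially the same route as the paper: write $\LL\varphi = \LL_0\varphi + \GG\varphi$ on $\CD(\LL)$, observe that the $\LL_0$-part gives $-\|(-\LL_0)^{1/2}\varphi\|^2$, and show that $\langle\varphi,\GG\varphi\rangle = 0$ by antisymmetry. The only stylistic difference is that the paper establishes the vanishing of $\langle\varphi,\GG\varphi\rangle$ more directly, by approximating with $\GG^m$ and using $\langle\varphi,\GG^m\varphi\rangle = -\langle\GG^m\varphi,\varphi\rangle$ globally (which follows from Lemma~\ref{lem:Gm} by summing over chaoses), rather than arguing chaos-by-chaos with the adjointness identity as you do; both arguments rest on the same regularity bookkeeping that you correctly identify.
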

\begin{proof}
Note that $\varphi\in\CD(\LL)$ implies $\LL_0 \varphi, \GG \varphi \in (-\LL_0)^{1/2} \Gamma L^2$ and $\varphi \in (-\LL_0)^{-1/2} (1 + \NN)^{-1} \Gamma L^2$ and therefore we can conclude by approximation in the chain of equalities
\[
\langle \varphi, \LL \varphi \rangle=- \langle \varphi, (-\LL_0) \varphi \rangle+\langle \varphi, \GG \varphi \rangle =- 
\langle \varphi, (-\LL_0) \varphi \rangle = - \| (-\LL_0)^{1/2} \varphi \|^2
\]
since all the inner products are well defined. In particular we used the antisymmetry of the form associated to $\GG$:
\[
\langle \varphi, \GG \varphi \rangle = \lim_{m\to\infty } \langle \varphi, \GG^m \varphi \rangle = - \lim_{m\to\infty } \langle \GG^m \varphi,  \varphi \rangle = - \langle \GG \varphi,  \varphi \rangle.
\]
\end{proof}
\begin{remark}
We can introduce another dissipative operator $\LL^-$ given by $\LL^-= \LL_0 - \GG = \LL^{(-1)}$ on the domain $\CD(\LL^-)$. Then  if
$\varphi \in \CD (\LL)$ and $\psi \in \CD
(\LL^-)$ we have
$ \LL_0 \varphi, \GG \varphi, \LL_0 \psi,
   \GG \psi \in (-\LL_0)^{1 / 2} \Gamma L^2$
and
$\varphi, \psi \in (\NN+ 1)^{- 1} (-\LL_0)^{- 1 / 2}
   \Gamma L^2 $
so the identities 
$ \LL \varphi =\LL_0 \varphi +\GG \varphi$,
   $\LL^- \psi =\LL_0 \psi -\GG \psi$ hold (as distributions)
and
\[ \langle \psi, \LL \varphi \rangle = \langle \psi, \LL_0
   \varphi \rangle + \langle \psi, \GG \varphi \rangle = \langle
   \psi, \LL_0 \varphi \rangle - \langle \GG \psi, \varphi
   \rangle = \langle \LL^- \psi, \varphi \rangle. \]
As a consequence $\LL^- \subseteq \LL^{\ast}$ and symmetrically $\LL \subseteq (\LL^-)^{\ast}$. The closed operators $\LL^{\ast}, (\LL^-)^{\ast}$ are dissipative and satisfy
$$
\LL^{\ast}, (\LL^-)^{\ast} \leqslant \LL_0
$$
in the sense of quadratic forms and on their respective domains.
\end{remark}

\section{The Kolmogorov backward equation}\label{sec:bw-eq}

So far we constructed a dense domain $\CD(\LL)$ for the operator $\LL$. In this section we will analyze the Kolmogorov backward equation $\partial_t \varphi = \LL \varphi$. More precisely we consider the backward equation for the Galerkin approximation~\eqref{eq:burgers-galerkin} with generator $\LL^m$, and we derive uniform estimates in controlled spaces for the solution. By compactness, this gives the existence of strong solutions to the backward equation after removing the cutoff. Uniqueness easily follows from the dissipativity of $\LL$.

\subsection{A priori bounds}\label{sec:bw-apriori}

Recall that $T^m$ is the semigroup generated by the Galerkin approximation $u^m$, the solution to~\eqref{eq:burgers-galerkin}. Here we consider $\varphi^m (t) = T^m_t \varphi^m_0$ for $\varphi^m_0\in \CD (\LL^m)$ and we derive some basic a priori estimates without using the controlled structure that we introduced above. Roughly speaking our aim is to gain some control of the growth in the chaos variable $n$ by making use of the antisymmetry of $\GG$. In the next section we then handle the regularity with respect to $(-\LL_0)$ by using the controlled structure.

  Recall from Corollary~\ref{cor:bw-eq} that $\partial_t \varphi^m =\LL^m
\varphi^m$, which yields
\[ \partial_t \frac{1}{2} \| \varphi^m (t) \|^2 = \langle \varphi^m (t),
   \LL_0 \varphi^m (t) +\GG^m \varphi^m (t) \rangle, \]
and since we saw in Lemma~\ref{lem:Gm} that $\langle \varphi^m, \GG^m \varphi^m \rangle = -\langle \GG^m \varphi^m,
\varphi^m \rangle$, we get $\langle  \varphi^m, \GG^m \varphi^m \rangle = 0$.
However, this
argument is only formal because $\GG^m$ introduces some growth in the
chaos variable $n$, and we do not control the decay of
$\varphi^m$ in $n$. Therefore, it is not clear that the ``integration by
parts'' $\langle \GG^m \varphi^m, \varphi^m \rangle = - \langle
\varphi^m, \GG^m \varphi^m \rangle$ is allowed. To overcome this
difficulty we fix a function $w \colon \N_0 \rightarrow \R_+$ of
compact support and note that
\begin{align*}
  \partial_t \frac{1}{2} \| w (\NN) \varphi^m (t) \|^2 & = \langle
  \varphi^m (t), w (\NN)^2 \LL^m \varphi^m (t) \rangle\\
  & = - \| w (\NN) (-\LL_0)^{1 / 2} \varphi^m (t) \|^2 +
  \langle \varphi^m (t), w (\NN)^2 \GG^m \varphi^m (t) \rangle,
\end{align*}
where we used that $\LL_0$ commutes with $w (\NN)$. Let us
focus on the second term on the right hand side, for which
\[ w (\NN)^2 \GG^m_+ \varphi^m =\GG^m_+ (w
   (1 + \NN)^2 \varphi^m), \qquad w (\NN)^2 \GG_-
   \varphi^m =\GG_-^m (w (\NN- 1)^2 \varphi^m), \]
and therefore Lemma~\ref{lem:Gm} gives
\begin{align*}
  \langle \varphi^m, w (\NN)^2 \GG^m \varphi^m \rangle & = \langle \varphi^m, \GG_+^m (w (\NN+
  1)^2 \varphi^m) \rangle + \langle \varphi^m, \GG_-^m (w
  (\NN- 1)^2 \varphi^m) \rangle\\
  & = - \langle \GG^m_- \varphi^m, w (1 + \NN)^2 \varphi^m
  \rangle - \langle \GG^m_+ \varphi^m, w (\NN- 1)^2 \varphi^m
  \rangle\\
  & = - \langle \GG^m \varphi^m, w
  (\NN)^2 \varphi^m \rangle\\
  &\quad  + \langle \GG^m_- \varphi^m, (w (\NN)^2 - w
  (1 + \NN)^2) \varphi^m \rangle + \langle \GG^m_+ \varphi^m, (w (\NN)^2 - w
  (\NN- 1)^2) \varphi^m \rangle,
\end{align*}
that is
\[ 2 \langle \varphi^m, w (\NN)^2 \GG^m \varphi^m
   \rangle = \langle \GG^m_- \varphi^m, (w (\NN)^2 - w
   (1 + \NN)^2) \varphi^m \rangle + \langle \GG_+^m \varphi^m,
   (w (\NN)^2 - w (\NN- 1)^2) \varphi^m \rangle. \]
Note that these computations
are rigorous since the compact support of $w$ ensures that the inner product
involves only a finite number of chaoses. Let us denote $h (n) = w (n)^2 - w
(n - 1)^2$, then we have for the $\GG^m_-$ term
\[ \langle \GG_-^m \varphi^m, - h (\NN+ 1) \varphi^m \rangle
   = \langle \varphi^m, \GG_+^m (h (\NN+ 1) \varphi^m)
   \rangle = \langle \varphi^m, h (\NN) \GG_+^m \varphi^m
   \rangle = \langle \GG_+^m \varphi^m, h (\NN) \varphi^m
   \rangle, \]
i.e.
\begin{equation}
  \langle \varphi^m, w (\NN)^2 \GG^m \varphi^m \rangle =
  \langle \GG_+^m \varphi^m, h (\NN) \varphi^m \rangle .
\end{equation}
Consider now a function $g \colon \N_0 \rightarrow [0,\infty)$ such that $g(n)=0$ only if $h(n) = h(n+1) = 0$; we will choose the precise form of $g$ later.
From the Cauchy-Schwarz inequality and estimate
(\ref{eq:Gplus-distributional}) we get
\begin{align*}
  | \langle \GG_+^m \varphi^m, h (\NN) \varphi^m \rangle | &
  \leqslant \left\| \frac{h (\NN)}{g (\NN)}
  (-\LL_0)^{- 1 / 2} \GG^m_+ \varphi^m \right\| \| g
  (\NN) (-\LL_0)^{1 / 2} \varphi^m \|\\
  & \lesssim \left\| \frac{h (\NN+ 1)}{g (\NN+ 1)}
  (\NN+ 1) (-\LL_0)^{1 / 4} \varphi^m \right\| \| g
  (\NN) (-\LL_0)^{1 / 2} \varphi^m \|,
\end{align*}
and then Young's inequality gives
\[ \left\| \frac{h (\NN+ 1)}{g (\NN+ 1)} (\NN+ 1)
   (-\LL_0)^{1 / 4} \varphi^m \right\| \lesssim \delta \| g
   (\NN) (-\LL_0)^{1 / 2} \varphi^m \| + \delta^{- 1} \left\|
   \left( \frac{h (\NN+ 1) (\NN+ 1)}{g (\NN+ 1) g
   (\NN)^{1 / 2}} \right)^2 \varphi^m \right\| \]
for all $\delta > 0$, which with another application of Young's inequality
yields
\[ | \langle \GG_+^m \varphi^m, h (\NN) \varphi^m \rangle |
   \leqslant \delta \| g (\NN) (-\LL_0)^{1 / 2} \varphi^m
   \|^2 + C (\delta) \left\| \left( \frac{h (\NN+ 1) (\NN+
   1)}{g (\NN+ 1) g (\NN)^{1 / 2}} \right)^2 \varphi^m
   \right\|^2 . \]
Recall that a \emph{dyadic partition of unity} consists of two functions
$\rho_{- 1}, \rho \in C^{\infty}_c (\R)$ such that with $\rho_i
\assign \rho (2^{- i} \cdot)$ for $i \geqslant 0$ we have $\tmop{supp}
(\rho_i) \cap \tmop{supp} (\rho_j)$ for $| i - j | > 1$ and such that $\sum_{i
\geqslant - 1} \rho_i (x) \equiv 1$; see \cite[Chapter~2.2]{Bahouri2011} for a
construction. In the following we write $i \sim j$ if $2^i \simeq 2^j$, i.e.
if $| i - j | \leqslant L$ for some fixed $L > 0$. Let us take $w (n) = \rho_i
(n)$ for a dyadic partition of unity, and $g = \sum_{j \sim i} \rho_j$. Then
we have for $n \simeq 2^i$
\begin{align*}
  \left| \frac{h (n + 1) (n + 1)}{g (n + 1) g (n)^{1 / 2}} \right| & = | h (n
  + 1) (n + 1) | = | (\rho_i (n + 1)^2 - \rho_i (n)^2) (n + 1) |\\
  & \leqslant (\rho_i (n) + \rho_i (n + 1)) | \rho_i (n + 1) - \rho_i (n) |
  (n + 1)\\
  & \lesssim \sum_{j \sim i} \rho_j (n) \max \{ \| \rho_{- 1}' \|_{\infty},
  \| \rho' \|_{\infty} \} 2^{- i} (n + 1) \lesssim \sum_{j \sim i} \rho_j (n),
\end{align*}
and $h (n + 1) (n + 1) / (g (n + 1) g (n)^{1 / 2}) = 0$ for $n \not\simeq
2^i$, and thus for all $\delta > 0$ there exists $C = C (\delta) > 0$,
independent of $i$, such that
\begin{align*}
  & \frac{1}{2} \| \rho_i (\NN) \varphi^m (t) \|^2 + \int_0^t \|
  \rho_i (\NN) (-\LL_0)^{1 / 2} \varphi^m (s) \|^2 \dd s\\
  & \hspace{20pt} \leqslant \frac{1}{2} \| \rho_i (\NN) \varphi^m_0
  \|^2 + \int_0^t \delta \sum_{j \sim i} \| \rho_j (\NN)
  (-\LL_0)^{1 / 2} \varphi^m (s) \|^2 \dd s + \int_0^t C \sum_{j
  \sim i} \| \rho_j (\NN) \varphi^m (s) \|^2 \dd s.
\end{align*}
From here we get for $\alpha \in \R$ and a new $C = C (\delta, \alpha) >
0$
\begin{align*}
  & \frac{1}{2} \sum_{i \geqslant - 1} 2^{2 i \alpha} \| \rho_i (\NN)
  \varphi^m (t) \|^2 + \int_0^t \sum_{i \geqslant - 1} 2^{2 i \alpha} \|
  \rho_i (\NN) (-\LL_0)^{1 / 2} \varphi^m (s) \|^2 \dd s\\
  & \hspace{30pt} \leqslant \frac{1}{2} \sum_{i \geqslant - 1} 2^{2 i \alpha}
  \| \rho_i (\NN) \varphi^m_0 \|^2 + \delta \int_0^t \sum_{i
  \geqslant - 1} 2^{2 i \alpha} \| \rho_i (\NN) (-\LL_0)^{1 /
  2} \varphi^m (s) \|^2 \dd s\\
  & \hspace{30pt} \quad + C \int_0^t \frac{1}{2} \sum_{i \geqslant - 1} 2^{2
  i \alpha} \| \rho_i (\NN) \varphi^m (s) \|^2 \dd s,
\end{align*}
and taking $\delta < 1$ we deduce the following bounds:

\begin{lemma}
  \label{lem:bw-apriori}For all $\alpha \in \R$ there exists $C = C
  (\alpha) > 0$ such that
  \begin{equation}
    \label{eq:bw-apriori-1} \sum_{i \geqslant - 1} 2^{2 i \alpha} \| \rho_i
    (\NN) \varphi^m (t) \|^2 \leqslant e^{t C} \sum_{i \geqslant - 1}
    2^{2 i \alpha} \| \rho_i (\NN) \varphi^m_0 \|^2,
  \end{equation}
  as well as
  \begin{equation}
    \label{eq:bw-apriori-2} \int_0^{\infty} e^{- t C} \sum_{i \geqslant - 1}
    2^{2 i \alpha} \| \rho_i (\NN) (-\LL_0)^{1 / 2} \varphi^m
    (t) \|^2 \dd t \leqslant \sum_i 2^{2 i \alpha} \| \rho_i (\NN)
    \varphi^m_0 \|^2 .
  \end{equation}
\end{lemma}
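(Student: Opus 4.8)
The plan is to read off both inequalities from the pointwise-in-$t$ energy estimate that the preceding computation has already produced block by block, using only Grönwall's lemma and an integrating factor; no new analytic input is needed.

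I would abbreviate $F(t) := \tfrac12\sum_{i\geqslant-1}2^{2i\alpha}\|\rho_i(\NN)\varphi^m(t)\|^2$ and $G(t) := \sum_{i\geqslant-1}2^{2i\alpha}\|\rho_i(\NN)(-\LL_0)^{1/2}\varphi^m(t)\|^2$. Since both claimed bounds are vacuous when $\sum_i 2^{2i\alpha}\|\rho_i(\NN)\varphi^m_0\|^2=\infty$, I may assume it is finite. Each single-block identity $\partial_t\tfrac12\|\rho_i(\NN)\varphi^m(t)\|^2 = -\|\rho_i(\NN)(-\LL_0)^{1/2}\varphi^m(t)\|^2 + \langle\varphi^m(t),\rho_i(\NN)^2\GG^m\varphi^m(t)\rangle$, combined with the bound derived above for the last term, involves only finitely many chaoses and is thus rigorous; summing these against the weights $2^{2i\alpha}$ is legitimate because the relation $j\sim i$ couples each index to only boundedly many others with $2^{2i\alpha}\simeq 2^{2j\alpha}$, so a weighted sum over $\{j\sim i\}$ is comparable to the full weighted sum. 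Fixing $\delta=\tfrac12$ (so that the constant $C$ below depends only on $\alpha$) and absorbing the resulting $\tfrac12 G$-term on the left, this yields
\[
\partial_t F(t) + \tfrac12 G(t)\ \leqslant\ C F(t)\qquad\text{for a.e. }t\geqslant 0,
\]
where the finiteness of $F$ and the local integrability of $G$ needed to make sense of this are themselves propagated from $t=0$ by this very inequality.

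Both conclusions are then immediate. Rewriting the inequality as $\partial_t\big(e^{-Ct}F(t)\big)\leqslant -\tfrac12 e^{-Ct}G(t)\leqslant 0$ shows that $t\mapsto e^{-Ct}F(t)$ is non-increasing, hence $F(t)\leqslant e^{Ct}F(0)$, which is \eqref{eq:bw-apriori-1} after multiplying by $2$. Integrating the inequality $\partial_t(e^{-Ct}F(t))\leqslant -\tfrac12 e^{-Ct}G(t)$ over $[0,T]$ gives $\tfrac12\int_0^T e^{-Ct}G(t)\,dt\leqslant F(0)-e^{-CT}F(T)\leqslant F(0)$, the final step using $F\geqslant 0$; letting $T\to\infty$ by monotone convergence gives $\int_0^\infty e^{-Ct}G(t)\,dt\leqslant 2F(0)=\sum_i 2^{2i\alpha}\|\rho_i(\NN)\varphi^m_0\|^2$, which is \eqref{eq:bw-apriori-2}. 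There is no real obstacle here, since the substantive work was the block-wise energy estimate above; the only point to be careful about is that for \eqref{eq:bw-apriori-2} one must retain the favorable sign of the $-\tfrac12 G$ term and discard $e^{-CT}F(T)\geqslant 0$ after integrating the integrating-factor identity, as a cruder passage through Grönwall would control $\int_0^t G$ only with exponential-in-$t$ growth and would not see the finiteness of $\int_0^\infty e^{-Ct}G$.
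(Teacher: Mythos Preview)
Your proof is correct and follows essentially the same route as the paper: both arguments take the block-wise differential inequality already established before the lemma, fix $\delta$ small (you take $\delta=\tfrac12$, the paper takes $\delta<1$) to absorb the $G$-term, apply Grönwall/the integrating factor $e^{-Ct}$ for \eqref{eq:bw-apriori-1}, and then integrate the integrating-factor identity and discard the nonnegative boundary term $e^{-CT}F(T)$ to obtain \eqref{eq:bw-apriori-2}. Your presentation is slightly more streamlined in that you derive both bounds from the single inequality $\partial_t(e^{-Ct}F)\leqslant -\tfrac12 e^{-Ct}G$, whereas the paper invokes Grönwall separately for the first bound, but the substance is identical.
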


\begin{proof}
  The first bound follows from our previous estimates and Gronwall's lemma.
  For the second bound, observe that
  \[ \partial_t \left( e^{- t C} \frac{1}{2} \| w (\NN) \varphi^m (t)
     \|^2 \right) = e^{- t C} \partial_t \frac{1}{2} \| w (\NN)
     \varphi^m (t) \|^2 - C e^{- t C} \frac{1}{2} \| w (\NN) \varphi^m
     (t) \|^2, \]
  and thus our estimates from above yield
  \begin{align*}
    & e^{- t C} \frac{1}{2} \sum_{i \geqslant - 1} 2^{2 i \alpha} \| \rho_i
    (\NN) \varphi^m (t) \|^2 + \int_0^t e^{- s C} \sum_{i \geqslant -
    1} 2^{2 i \alpha} \| \rho_i (\NN) (-\LL_0)^{1 / 2}
    \varphi^m (s) \|^2 \dd s\\
    & \hspace{30pt} \leqslant \frac{1}{2} \sum_{i \geqslant - 1} 2^{2 i
    \alpha} \| \rho_i (\NN) \varphi^m_0 \|^2 + \delta \int_0^t e^{-
    s C} \sum_{i \geqslant - 1} 2^{2 i \alpha} \| \rho_i (\NN)
    (-\LL_0)^{1 / 2} \varphi^m (s) \|^2 \dd s.
  \end{align*}
  Then we take $\delta = 1/2$, bring the integral from the right to the left, and send $t \rightarrow \infty$ to deduce
  (\ref{eq:bw-apriori-2}).
\end{proof}

\begin{remark}
The norms appearing in the previous lemma can be brought to a more familiar
``Sobolev'' form with the help of the following simple result: For all $\alpha
\in \R$ and $\varphi \in \Gamma L^2$ we have
\begin{align*}
  \sum_{i \geqslant - 1} 2^{2 i \alpha} \| \rho_i (\NN) \varphi \|^2 &
  = \sum_{i \geqslant - 1} 2^{2 i \alpha} \sum_{n \geqslant 0} n! \rho_i (n)^2
  \| \varphi_n \|_{L^2 (\T^n)}^2 \simeq \sum_{n \geqslant 0} n! (1 +
  n)^{2 \alpha} \sum_{i \geqslant - 1} \rho_i (n)^2 \| \varphi_n \|_{L^2
  (\T^n)}^2\\
  & \simeq \sum_{n \geqslant 0} n! (1 + n)^{2 \alpha} \| \varphi_n \|_{L^2
  (\T^n)}^2 = \| (1 + \NN)^{\alpha} \varphi \|^2,
\end{align*}
where we used that $\sum_i \rho_i^2 (n) \simeq 1$. The reason for not directly
working with this Sobolev type norm is that the dyadic partition of unity
allows us to localize in $n$ and therefore to rigorously justify the
operations on $\GG^m_+$ and $\GG^m_-$ above. Compared to a
hard cutoff, the smooth dyadic partition has the advantage that the transition
from the support of $\rho_i$ to its complement is well behaved, while for a
hard cutoff it gives a too large contribution and we cannot close our
estimates.
\end{remark}

\begin{corollary}
  \label{cor:bw-apriori-2}We have for $\varphi^m$, $\alpha$, and $C$ as
  in Lemma~\ref{lem:bw-apriori}
  \begin{equation}
    \label{eq:bw-apriori-3} \| (1 + \NN)^{\alpha} \partial_t \varphi^m
    (t) \|^2 = \| (1 + \NN)^{\alpha} \LL^m \varphi^m (t) \|^2
    \lesssim e^{t C} \| (1 + \NN)^{\alpha} \LL^m \varphi^m_0
    \|^2
  \end{equation}
  and
  \begin{equation}
    \label{eq:bw-apriori-4} \| (1 + \NN)^{\alpha} (-\LL_0)^{1
    / 2} \varphi^m (t) \|^2 \lesssim t e^{t C} \| (1 + \NN)^{\alpha}
    \LL^m \varphi^m_0 \|^2 + \| (1 + \NN)^{\alpha}
    (-\LL_0)^{1 / 2} \varphi^m_0 \|^2 .
  \end{equation}
\end{corollary}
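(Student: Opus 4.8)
Both estimates follow by feeding the a priori bounds of Lemma~\ref{lem:bw-apriori} into the time derivative $\partial_t\varphi^m$, which solves the same Kolmogorov backward equation. Since $\varphi^m_0\in\CD(\LL^m)$ lies in the domain of the generator of the contraction semigroup $(T^m_t)_{t\geqslant0}$ (this is seen as in Corollary~\ref{cor:bw-eq}, using Proposition~\ref{prop:domain} to guarantee $\LL^m\varphi^m_0\in\Gamma L^2$), the map $t\mapsto\varphi^m(t)=T^m_t\varphi^m_0$ is continuously differentiable in $\Gamma L^2$ with $\partial_t\varphi^m(t)=\LL^m\varphi^m(t)=T^m_t(\LL^m\varphi^m_0)$; hence $\psi^m:=\partial_t\varphi^m$ solves the backward equation with initial datum $\LL^m\varphi^m_0$. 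Next I would observe that, although Lemma~\ref{lem:bw-apriori} is stated for initial data in $\CD(\LL^m)$, the bounds \eqref{eq:bw-apriori-1} and \eqref{eq:bw-apriori-2} hold for $T^m_\cdot\chi$ with arbitrary $\chi\in\Gamma L^2$ and arbitrary $\alpha\in\R$: when the right-hand side is $+\infty$ there is nothing to prove, and otherwise one approximates $\chi$ by elements of the generator domain (which are dense and for which the estimates were derived) and passes to the limit, using that $T^m_t$ is an $L^2(\mu)$-contraction and that the left-hand sides of \eqref{eq:bw-apriori-1}, \eqref{eq:bw-apriori-2} are lower semicontinuous along $L^2(\mu)$-convergent sequences. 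In the ``Sobolev'' form of the Remark following Lemma~\ref{lem:bw-apriori} these read
\[
	\|(1+\NN)^\alpha T^m_t\chi\|^2\leqslant e^{tC}\|(1+\NN)^\alpha\chi\|^2,\qquad \int_0^\infty e^{-sC}\|(1+\NN)^\alpha(-\LL_0)^{1/2}T^m_s\chi\|^2\,\dd s\leqslant\|(1+\NN)^\alpha\chi\|^2.
\]

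Estimate \eqref{eq:bw-apriori-3} then follows at once by applying the first bound with $\chi=\LL^m\varphi^m_0$, since $\partial_t\varphi^m(t)=T^m_t\LL^m\varphi^m_0$ and $\partial_t\varphi^m(t)=\LL^m\varphi^m(t)$. For \eqref{eq:bw-apriori-4} I would integrate the backward equation: assuming its right-hand side is finite (otherwise nothing is to prove), $s\mapsto\partial_s\varphi^m(s)=T^m_s\LL^m\varphi^m_0$ is continuous and, by \eqref{eq:bw-apriori-3}, bounded on $[0,t]$ in $(1+\NN)^{-\alpha}\Gamma L^2$, so $\varphi^m(t)=\varphi^m_0+\int_0^t\partial_s\varphi^m(s)\,\dd s$ as a Bochner integral there. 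Applying the second bound above to $\partial_s\varphi^m$ gives $\int_0^t\|(1+\NN)^\alpha(-\LL_0)^{1/2}\partial_s\varphi^m(s)\|^2\,\dd s\leqslant e^{tC}\|(1+\NN)^\alpha\LL^m\varphi^m_0\|^2$, which is finite, whence also $\int_0^t\|(1+\NN)^\alpha(-\LL_0)^{1/2}\partial_s\varphi^m(s)\|\,\dd s<\infty$ by Cauchy--Schwarz in $s$, so that the closed operator $(-\LL_0)^{1/2}$ may be interchanged with the integral:
\[
	(-\LL_0)^{1/2}\varphi^m(t)=(-\LL_0)^{1/2}\varphi^m_0+\int_0^t(-\LL_0)^{1/2}\partial_s\varphi^m(s)\,\dd s .
\]
By Minkowski's inequality followed by Cauchy--Schwarz in $s$,
\[
	\|(1+\NN)^\alpha(-\LL_0)^{1/2}\varphi^m(t)\|\leqslant\|(1+\NN)^\alpha(-\LL_0)^{1/2}\varphi^m_0\|+\Big(t\int_0^t\|(1+\NN)^\alpha(-\LL_0)^{1/2}\partial_s\varphi^m(s)\|^2\,\dd s\Big)^{1/2},
\]
and squaring, together with the integral bound just obtained and $(a+b)^2\leqslant2a^2+2b^2$, yields \eqref{eq:bw-apriori-4}.

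The computation itself is short; I expect the only delicate point to be the functional-analytic bookkeeping of the first paragraph, namely the identity $\partial_t\varphi^m=T^m_\cdot\LL^m\varphi^m_0$ and, above all, the fact that Lemma~\ref{lem:bw-apriori} and the contraction property of $(T^m_t)$ survive the passage from the regular data in $\CD(\LL^m)$ to the in general much less regular datum $\LL^m\varphi^m_0\in\Gamma L^2$ (carried out by density of the generator domain and lower semicontinuity of the relevant norms), together with the routine interchange of the closed operator $(-\LL_0)^{1/2}$ with the time integral.
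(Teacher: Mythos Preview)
Your proof is correct and follows essentially the same route as the paper: apply the a priori bounds of Lemma~\ref{lem:bw-apriori} to $T^m_\cdot(\LL^m\varphi^m_0)=\partial_\cdot\varphi^m$, then recover~\eqref{eq:bw-apriori-4} by writing $\varphi^m(t)=\varphi^m_0+\int_0^t\partial_s\varphi^m(s)\,\dd s$ and using Cauchy--Schwarz in time together with the weighted $L^2$-in-time bound. You are actually more careful than the paper in justifying why Lemma~\ref{lem:bw-apriori} may be applied to the initial datum $\LL^m\varphi^m_0$ (which need not lie in $\CD(\LL^m)$) and in handling the interchange of $(-\LL_0)^{1/2}$ with the time integral; the paper simply writes these steps without comment.
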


\begin{proof}
  Recall that $T^m_t \varphi^m_0 = \varphi^m(t)$. We just showed
  \[ e^{- t C} \| (1 + \NN)^{\alpha} T^m_t \varphi^m_0 \| +
     \int_0^{\infty} e^{- t C} \| (1 + \NN)^{\alpha}
     (-\LL_0)^{1 / 2} T^m_t \varphi^m_0 \|^2 \dd t \lesssim \|
     (1 + \NN)^{\alpha} \varphi^m_0 \|, \]
  and therefore also
  \begin{align*}
    \int_0^{\infty} e^{- t C} \| (1 + \NN)^{\alpha}
    (-\LL_0)^{1 / 2} \partial_t T^m_t \varphi^m_0 \|^2 \dd t & =
    \int_0^{\infty} e^{- t C} \| (1 + \NN)^{\alpha}
    (-\LL_0)^{1 / 2} T_t^m \LL^m \varphi^m_0 \|^2 \dd
    t\\
    & \lesssim \| (1 + \NN)^{\alpha} \LL^m \varphi^m_0
    \|^2,
  \end{align*}
  and
  \begin{align*}
    & \| (1 + \NN)^{\alpha} (-\LL_0)^{1 / 2} T^m_t \varphi^m_0 \|^2  \\
    &\hspace{40pt} \lesssim \left\| \int_0^t (1 + \NN)^{\alpha}
    (-\LL_0)^{1 / 2} \partial_s T_s^m \varphi^m_0 \dd s
    \right\|^2 + \| (1 + \NN)^{\alpha} (-\LL_0)^{1 / 2}
    \varphi^m_0 \|^2\\
    &\hspace{40pt} \leqslant t \int_0^t \| (1 + \NN)^{\alpha} (-\LL_0)^{1
    / 2} \partial_s T_s^m \varphi^m_0 \|^2 \dd s + \| (\NN+
    1)^{\alpha} (-\LL_0)^{1 / 2} \varphi^m_0 \|^2\\
    &\hspace{40pt} \leqslant t e^{t C} \int_0^t e^{- s C} \| (1 + \NN)^{\alpha}
    (-\LL_0)^{1 / 2} \partial_s T^m_s \varphi^m_0 \|^2 \dd s + \| (1 + \NN)^{\alpha} (-\LL_0)^{1 / 2}
    \varphi^m_0 \|^2\\
    &\hspace{40pt} \lesssim t e^{t C} \| (1 + \NN)^{\alpha} \LL^m
    \varphi^m_0 \|^2 + \| (1 + \NN)^{\alpha} (-\LL_0)^{1 /
    2} \varphi^m_0 \|^2,
  \end{align*}
  which is the claimed estimate.
\end{proof}

\subsection{Controlled solutions}\label{sec:bw-controlled}

The a priori bounds (\ref{eq:bw-apriori-3}) and (\ref{eq:bw-apriori-4}) allow
us to control $\| \varphi^m (t) \|$, $\| \partial_t \varphi^m (t) \|$, and $\| \LL^m \varphi^m (t) \|$
uniformly in $m$ and locally uniformly in $t$. We want to use this to construct solutions of the limiting backward equation $\partial_t
\varphi =\LL \varphi$ that are in the domain $\CD
(\LL)$ from Section~\ref{sec:controlled}.
Therefore, let us set
\begin{equation}\label{eq:phi-m-sharp-def}
	\varphi^{m, \sharp} \assign \varphi^m - (-\LL_0)^{- 1}
   \GG^{m, \succ} \varphi^m,
\end{equation}
so that $\varphi^m =\KK^m \varphi^{m, \sharp}$.

\begin{convention}
	Throughout this section we consider a cutoff $N_n$ in Lemma~\ref{lem:controlled-fct} that is adapted to the weights $(1+\NN)^\beta$ for all $\beta$ that we encounter below.	
\end{convention}

\begin{lemma}
  \label{lem:phi-sharp-apriori}The a priori estimates from the previous
  section give
  \begin{equation}\label{eq:phi-sharp-apriori}
    \| (1 + \NN)^{\alpha} (-\LL_0)^{1 / 2} \varphi^{m, \sharp}
    (t) \| \lesssim (t e^{t C} + 1)^{1/2} (\| (1 + \NN)^{\alpha} (-\LL_0) \varphi^{m,\sharp}_0 \| + \| (1 + \NN)^{\alpha+9/2} (-\LL_0)^{1/2} \varphi^{m,\sharp}_0 \| ).
  \end{equation}
\end{lemma}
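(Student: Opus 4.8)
The plan is to combine the controlled estimates of Lemma~\ref{lem:controlled-fct} and Proposition~\ref{prop:domain} (in their $m$-dependent versions, cf. Remark~\ref{rmk:controlled-fct-m}) with the semigroup bound~\eqref{eq:bw-apriori-4} of Corollary~\ref{cor:bw-apriori-2}. The point is that $\varphi^{m,\sharp}(t)$ differs from $\varphi^m(t)$ only by $(-\LL_0)^{-1}\GG^{m,\succ}\varphi^m(t)$, so that spatial regularity up to order $(-\LL_0)^{1/2}$ transfers freely between the two; it therefore suffices to estimate $\varphi^m(t)$ in the norm $\|(1+\NN)^\alpha(-\LL_0)^{1/2}\cdot\|$, which is exactly what~\eqref{eq:bw-apriori-4} delivers in terms of $\|(1+\NN)^\alpha\LL^m\varphi^m_0\|$ and $\|(1+\NN)^\alpha(-\LL_0)^{1/2}\varphi^m_0\|$. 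The remaining work is then to re-express these two quantities in terms of $\varphi^{m,\sharp}_0$.

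First, since $\varphi^m=\KK^m\varphi^{m,\sharp}$ and $\gamma=1/2$ lies in the admissible range of Lemma~\ref{lem:controlled-fct}, the $m$-version of~\eqref{eq:controlled-fct-1} with the weight $w(n)=(1+n)^\alpha$ gives
\[
\|(1+\NN)^\alpha(-\LL_0)^{1/2}(-\LL_0)^{-1}\GG^{m,\succ}\varphi^m(t)\|\lesssim\|(1+\NN)^\alpha(-\LL_0)^{1/2}\varphi^m(t)\|,
\]
hence by~\eqref{eq:phi-m-sharp-def} and the triangle inequality $\|(1+\NN)^\alpha(-\LL_0)^{1/2}\varphi^{m,\sharp}(t)\|\lesssim\|(1+\NN)^\alpha(-\LL_0)^{1/2}\varphi^m(t)\|$, and, applied at $t=0$ together with~\eqref{eq:controlled-fct-estimate}, also the reverse bound $\|(1+\NN)^\alpha(-\LL_0)^{1/2}\varphi^m_0\|\lesssim\|(1+\NN)^\alpha(-\LL_0)^{1/2}\varphi^{m,\sharp}_0\|$. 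Second, I rewrite $\LL^m\varphi^m_0=\LL_0\varphi^{m,\sharp}_0+\GG^{m,\prec}\varphi^m_0$ as in Proposition~\ref{prop:domain}, use $\|(1+\NN)^\alpha\LL_0\varphi^{m,\sharp}_0\|=\|(1+\NN)^\alpha(-\LL_0)\varphi^{m,\sharp}_0\|$, and apply the $m$-version of~\eqref{eq:domain} with $\gamma=0$ and $\delta=1/4$ (so $(-\LL_0)^{1/4+\delta}=(-\LL_0)^{1/2}$) to obtain $\|(1+\NN)^\alpha\GG^{m,\prec}\varphi^m_0\|\lesssim\|(1+\NN)^{\alpha+9/2}(-\LL_0)^{1/2}\varphi^{m,\sharp}_0\|$, whence
\[
\|(1+\NN)^\alpha\LL^m\varphi^m_0\|\lesssim\|(1+\NN)^\alpha(-\LL_0)\varphi^{m,\sharp}_0\|+\|(1+\NN)^{\alpha+9/2}(-\LL_0)^{1/2}\varphi^{m,\sharp}_0\|.
\]

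Finally I plug both estimates into~\eqref{eq:bw-apriori-4}, combine with the first step, and absorb the lower-order term $\|(1+\NN)^\alpha(-\LL_0)^{1/2}\varphi^{m,\sharp}_0\|\lesssim\|(1+\NN)^{\alpha+9/2}(-\LL_0)^{1/2}\varphi^{m,\sharp}_0\|$ using $(te^{tC}+1)^{1/2}\geqslant 1$; this yields~\eqref{eq:phi-sharp-apriori}. The estimate is essentially a bookkeeping exercise: the only genuine input is the $(1+\NN)^{9/2}$ loss from Proposition~\ref{prop:domain}, which is the source of the weight $(1+\NN)^{\alpha+9/2}$ on the right-hand side. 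The main point to be careful about is that all the weights $(1+\NN)^\beta$ encountered along the way (with $\beta\in\{\alpha,\alpha+9/2\}$, together with the auxiliary exponents the controlled estimates introduce) have $|w|$ uniformly bounded, so a single cutoff $N_n$ serves for all of them — this is precisely what the Convention at the start of the section guarantees — and there is no uniformity-in-$m$ obstruction since every cited bound already holds uniformly in $m$.
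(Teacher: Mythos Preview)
Your proof is correct and follows essentially the same route as the paper's: bound $\|(1+\NN)^\alpha(-\LL_0)^{1/2}\varphi^{m,\sharp}(t)\|$ by $\|(1+\NN)^\alpha(-\LL_0)^{1/2}\varphi^m(t)\|$ via Lemma~\ref{lem:controlled-fct} with $\gamma=1/2$, apply~\eqref{eq:bw-apriori-4}, and then convert the resulting $\LL^m\varphi^m_0$ and $(-\LL_0)^{1/2}\varphi^m_0$ terms back to $\varphi^{m,\sharp}_0$ using Proposition~\ref{prop:domain} and~\eqref{eq:controlled-fct-estimate}. Your explicit choice $\delta=1/4$ and your remark on the Convention about the cutoff being adapted to all the weights involved are details the paper leaves implicit, but the argument is the same.
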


\begin{proof}
It follows from~\eqref{eq:bw-apriori-4} and Lemma~\ref{lem:controlled-fct} that
  \begin{align*}
  	\| (1 + \NN)^{\alpha} (-\LL_0)^{1 / 2} \varphi^{m, \sharp} (t) \|^2 & \lesssim \| (1 + \NN)^{\alpha} (-\LL_0)^{1 / 2} \varphi^{m} (t) \|^2 + \| (1 + \NN)^{\alpha} (-\LL_0)^{- 1/2} \GG^{m, \succ} \varphi^m (t) \|^2 \\
  	& \lesssim t e^{t C} \| (1 + \NN)^{\alpha} \LL^m \varphi^m_0 \|^2 + \| (1 + \NN)^{\alpha} (-\LL_0)^{1 / 2} \varphi^m_0 \|^2 \\
  	& \lesssim (t e^{t C} + 1) (\| (1 + \NN)^{\alpha} (-\LL_0) \varphi^{m,\sharp}_0 \|^2 + \| (1 + \NN)^{\alpha+9/2} (-\LL_0)^{1/2} \varphi^{m,\sharp}_0 \|^2 ),
  \end{align*}
  where in the last step we applied Proposition~\ref{prop:domain}.
\end{proof}

Unfortunately this estimate is not enough to show that $\varphi^m \in
\CD (\LL^m)$, which requires a bound on $\| (-\LL_0)
\varphi^{m, \sharp} \| + \| (1 + \NN)^{9 / 2} (-\LL_0)^{1 / 2}
\varphi^{m, \sharp} \|$. And in fact we will need even more regularity to deduce compactness in the right spaces. So let us analyze the equation for $\varphi^{m,
\sharp}$:
\begin{equation}\label{eq:phi-m-sharp-eq}
\begin{aligned}
  \partial_t \varphi^{m, \sharp} & =\LL^m \varphi^m -
  (-\LL_0)^{- 1} \GG^{m, \succ} \partial_t \varphi^m\\
  & =\LL_0 \varphi^{m, \sharp} + \GG^{m,\prec} \varphi^m - (-\LL_0)^{- 1}
  \GG^{m,\succ} \partial_t \varphi^m .
\end{aligned}
\end{equation}
The second term on the right hand side can be controlled with~\eqref{eq:domain}, which gives for $\gamma \geqslant 0$ and $\delta > 0$
\[
	\|(1+\NN)^\alpha (-\LL_0)^\gamma \GG^{m,\prec} \varphi^m \| \lesssim \| (1+\NN)^{\alpha + \alpha(\gamma)} (-\LL_0)^{1/4+\delta} \varphi^{m,\sharp} \|,
\]
so together with our a priori bound~\eqref{eq:phi-sharp-apriori} we get
\begin{equation}\label{eq:bw-controlled-1}
\begin{aligned}
	\sup_{t \in [0,T]} \|(1+\NN)^\alpha (-\LL_0)^\gamma \GG^{m,\prec} \varphi^m(t) \| \lesssim_T & \| (1 + \NN)^{\alpha + \alpha(\gamma)} (-\LL_0) \varphi^{m,\sharp}_0 \| \\
	& + \| (1 + \NN)^{\alpha+\alpha(\gamma)+9/2} (-\LL_0)^{1/2} \varphi^{m,\sharp}_0 \|.
\end{aligned}
\end{equation}
The remaining term $(-\LL_0)^{- 1}\GG^{m,\succ} \partial_t \varphi^m$ is more tricky. We can plug in the explicit form of the time derivative, $\partial_t \varphi^m = \GG^{m,\prec} \varphi^m + \LL_0 \varphi^{m,\sharp}$, but then we have a problem with the term $\LL_0 \varphi^{m,\sharp}$ because it is of the same order as the leading term of the equation for $\varphi^{m,\sharp}$. Therefore, we would like to gain a bit of regularity in $(-\LL_0)$ from $(-\LL_0)^{- 1}\GG^{m,\succ}$, and indeed this is possible by slightly adapting the proof of Lemma~\ref{lem:controlled-fct}; see Lemma~\ref{lem:controlled-fct-adapted} in the appendix for details. This gives for $\gamma \in (1/2, 3/4)$
\begin{align*}
	\| (1 + \NN)^\alpha (-\LL_0)^\gamma (-\LL_0)^{- 1} \GG^{m,\succ} \partial_t \varphi^m \| & \lesssim \| (1 + \NN)^{\alpha + 3/2} (-\LL_0)^{\gamma-1/4} (\GG^{m,\prec} \varphi^m + \LL_0 \varphi^{m,\sharp}) \| \\
	& \lesssim \| (1 + \NN)^{\alpha + 3/2 + \alpha(\gamma-1/4)} (-\LL_0)^{1/4+\delta}  \varphi^{m,\sharp} \| \\
	&\quad + \|(1+\NN)^{\alpha+3/2} (-\LL_0)^{\gamma + 3/4} \varphi^{m,\sharp}\|.
\end{align*}
Recall that $\alpha(\gamma) = 9/2 + 7\gamma$, and therefore $3/2 + \alpha(\gamma-1/4) \leqslant \alpha(\gamma)$ and the first term on the right hand side is bounded by the same expression as in~\eqref{eq:bw-controlled-1}. For the remaining term we apply Young's inequality for products: There exists $p>0$ such that for all $\varepsilon \in (0,1)$
\begin{equation}\label{eq:bw-controlled-2}
	 \|(1+\NN)^{\alpha+3/2} (-\LL_0)^{\gamma + 3/4} \varphi^{m,\sharp}\| \lesssim \varepsilon^{-p} \| (1+\NN)^p (-\LL_0)^{1/2} \varphi^{m,\sharp} \| + \varepsilon \|(1+\NN)^\alpha (-\LL_0)^{\gamma + 7/8} \varphi^{m,\sharp}\|.
\end{equation}
The first term on the right hand side is under control by our a priori estimates, and the second term on the right hand side can be estimated using the regularizing effect of the semigroup $(S_t)$ generated by $\LL_0$:

\begin{lemma}
	Let $\gamma \in (3/8, 5/8)$. There exists $p=p(\alpha,\gamma)$ such that for all $T>0$
	\begin{equation}
		\sup_{t \in [0,T]} \big(\|(1+\NN)^\alpha (-\LL_0)^{1+\gamma} \varphi^{m,\sharp}(t) \| + \|(1+\NN)^\alpha (-\LL_0)^{\gamma} \partial_t \varphi^{m,\sharp}(t) \|\big)\lesssim_T \|(1+\NN)^{p} (-\LL_0)^{1+\gamma} \varphi^{m,\sharp}_0\|.
	\end{equation}
\end{lemma}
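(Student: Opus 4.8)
The plan is to work with the mild (Duhamel) formulation of~\eqref{eq:phi-m-sharp-eq} with respect to the semigroup $S_t = e^{t\LL_0}$ and then to close the estimate by absorption. Recall that $S_t$ commutes with $\NN$ and with every power $(-\LL_0)^\theta$, is a contraction on $\Gamma L^2$, and satisfies the smoothing bound $\|(-\LL_0)^\theta S_t\|_{\Gamma L^2 \to \Gamma L^2} \lesssim t^{-\theta}$ for $\theta > 0$. Applying the variation of constants formula to~\eqref{eq:phi-m-sharp-eq} gives
\[
	\varphi^{m,\sharp}(t) = S_t \varphi^{m,\sharp}_0 + \int_0^t S_{t-s}\big(\GG^{m,\prec}\varphi^m(s) - (-\LL_0)^{-1}\GG^{m,\succ}\partial_s\varphi^m(s)\big)\,\dd s,
\]
which is legitimate because for fixed $m$ every term is smooth enough: $\GG^m$ is, for fixed $m$, only a lower-order perturbation of $\LL_0$ (it loses merely $(-\LL_0)^{1/2}$, see~\eqref{eq:G-m-dependent}), so $T^m$ preserves every regularity scale $(1+\NN)^{-\beta}(-\LL_0)^{-\theta}\Gamma L^2$ with $m$-dependent constants, and in particular $\sup_{t\le T}\|(1+\NN)^\alpha(-\LL_0)^{1+\gamma}\varphi^{m,\sharp}(t)\| < \infty$. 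I would then fix an auxiliary exponent $\gamma'$ with $\max\{\gamma,1/2\} < \gamma' < \min\{\gamma+1/8,\,3/4\}$; such a $\gamma'$ exists precisely because $\gamma \in (3/8, 5/8)$. Applying $(1+\NN)^\alpha(-\LL_0)^{1+\gamma}$ to the Duhamel formula, the free term is bounded by $\|(1+\NN)^\alpha(-\LL_0)^{1+\gamma}\varphi^{m,\sharp}_0\|$, and on the integral term I would split $(-\LL_0)^{1+\gamma} = (-\LL_0)^{(1+\gamma)-\gamma'}(-\LL_0)^{\gamma'}$, letting $S_{t-s}$ absorb $(-\LL_0)^{(1+\gamma)-\gamma'}$; since $(1+\gamma)-\gamma' \in (7/8,1)$ this produces the time-integrable kernel $(t-s)^{-((1+\gamma)-\gamma')}$, and it remains to estimate $(1+\NN)^\alpha(-\LL_0)^{\gamma'}$ applied to the Burgers drift.

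The first drift contribution is controlled by~\eqref{eq:bw-controlled-1} (with $\gamma$ there replaced by $\gamma'$), which bounds $\|(1+\NN)^\alpha(-\LL_0)^{\gamma'}\GG^{m,\prec}\varphi^m(s)\|$ by a combination of $\|(1+\NN)^{p_1}(-\LL_0)\varphi^{m,\sharp}_0\|$ and $\|(1+\NN)^{p_1}(-\LL_0)^{1/2}\varphi^{m,\sharp}_0\|$; since $\gamma > 0$ and $-\LL_0 \ge (2\pi)^2$ on each nonconstant chaos, both are $\lesssim \|(1+\NN)^{p_1}(-\LL_0)^{1+\gamma}\varphi^{m,\sharp}_0\|$. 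For the second, harder contribution I would use the sharpened version of Lemma~\ref{lem:controlled-fct} from the appendix (Lemma~\ref{lem:controlled-fct-adapted}), applicable because $\gamma' \in (1/2,3/4)$, together with $\partial_s\varphi^m = \GG^{m,\prec}\varphi^m + \LL_0\varphi^{m,\sharp}$ and~\eqref{eq:domain}, to reach — exactly as in the computation preceding the statement —
\[
	\|(1+\NN)^\alpha(-\LL_0)^{\gamma'-1}\GG^{m,\succ}\partial_s\varphi^m(s)\| \lesssim \|(1+\NN)^{p_2}(-\LL_0)^{1+\gamma}\varphi^{m,\sharp}_0\| + \|(1+\NN)^{\alpha+3/2}(-\LL_0)^{\gamma'+3/4}\varphi^{m,\sharp}(s)\|,
\]
where the first term on the right used $3/2 + \alpha(\gamma'-1/4) \le \alpha(\gamma')$ and the a priori bound~\eqref{eq:phi-sharp-apriori}. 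The remaining, dangerous term is split by Young's inequality~\eqref{eq:bw-controlled-2} (with $\gamma \to \gamma'$): for every $\varepsilon \in (0,1)$ it is $\lesssim \varepsilon^{-q}\|(1+\NN)^q(-\LL_0)^{1/2}\varphi^{m,\sharp}(s)\| + \varepsilon\|(1+\NN)^\alpha(-\LL_0)^{\gamma'+7/8}\varphi^{m,\sharp}(s)\|$. The first piece is under control by~\eqref{eq:phi-sharp-apriori}, hence $\lesssim_T \|(1+\NN)^{q'}(-\LL_0)^{1+\gamma}\varphi^{m,\sharp}_0\|$; in the second piece I would use $\gamma'+7/8 \le 1+\gamma$ to interpolate between $(-\LL_0)^{1/2}$ and $(-\LL_0)^{1+\gamma}$, so that it is bounded by $\varepsilon\|(1+\NN)^\alpha(-\LL_0)^{1+\gamma}\varphi^{m,\sharp}(s)\|$ plus a term again controlled by~\eqref{eq:phi-sharp-apriori}.

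Collecting everything and writing $F_m(t) := \sup_{s\le t}\|(1+\NN)^\alpha(-\LL_0)^{1+\gamma}\varphi^{m,\sharp}(s)\|$ — finite as noted above — one arrives at
\[
	F_m(t) \lesssim_T \|(1+\NN)^{p}(-\LL_0)^{1+\gamma}\varphi^{m,\sharp}_0\| + \varepsilon\int_0^t (t-s)^{-((1+\gamma)-\gamma')}F_m(s)\,\dd s
\]
uniformly in $m$ and for every $\varepsilon \in (0,1)$. Since $F_m$ is nondecreasing and $\theta := (1+\gamma)-\gamma' \in (7/8,1)$, the integral is at most $F_m(T)\,T^{1-\theta}/(1-\theta)$, so choosing $\varepsilon = \varepsilon(T)$ small enough that the prefactor of $F_m(T)$ on the right is at most $1/2$ and absorbing it to the left yields the asserted bound on $\|(1+\NN)^\alpha(-\LL_0)^{1+\gamma}\varphi^{m,\sharp}(t)\|$. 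For the time-derivative part I would simply read off from~\eqref{eq:phi-m-sharp-eq} that $(-\LL_0)^\gamma\partial_t\varphi^{m,\sharp} = -(-\LL_0)^{1+\gamma}\varphi^{m,\sharp} + (-\LL_0)^\gamma\GG^{m,\prec}\varphi^m - (-\LL_0)^{\gamma-1}\GG^{m,\succ}\partial_t\varphi^m$ and estimate the three summands: the first by the bound just proved, the second by~\eqref{eq:bw-controlled-1}, and the third by the appendix lemma applied at a suitable exponent in $(1/2,3/4)$ together with the bound just proved and $\partial_t\varphi^m = \GG^{m,\prec}\varphi^m + \LL_0\varphi^{m,\sharp}$ (here one again uses $\gamma > 3/8$ to keep the resulting $(-\LL_0)$-power at most $1+\gamma$), each step only enlarging $p$.

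The step I expect to be the main obstacle is the treatment of the term $(-\LL_0)^{-1}\GG^{m,\succ}\partial_t\varphi^m$: once $\partial_t\varphi^m$ is expanded it contains $\LL_0\varphi^{m,\sharp}$, which is of the same order as the leading term of the equation for $\varphi^{m,\sharp}$, so a naive estimate simply loops back on itself. Breaking this loop requires three gains to be available \emph{at once} — the extra sliver of $(-\LL_0)$-regularity extracted from $(-\LL_0)^{-1}\GG^{m,\succ}$ in Lemma~\ref{lem:controlled-fct-adapted}, the trade of regularity for an integrable-in-time singularity coming from the smoothing of $S_t$, and the room in Young's inequality needed for the absorption — and it is exactly the simultaneous availability of all three margins that is encoded in the restriction $\gamma \in (3/8,5/8)$; outside this range one of the inequalities fails to close.
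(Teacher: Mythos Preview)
Your argument is correct and follows essentially the same route as the paper: Duhamel with respect to $S_t=e^{t\LL_0}$, the smoothing $\|(-\LL_0)^{\theta}S_t\|\lesssim t^{-\theta}$, the estimate for $(-\LL_0)^{-1}\GG^{m,\succ}$ from Lemma~\ref{lem:controlled-fct-adapted}, the Young splitting~\eqref{eq:bw-controlled-2}, and absorption of the resulting small term into the supremum. The paper simply makes the specific choice $\gamma'=\gamma+1/8$ (so the smoothing kernel is $(t-s)^{-7/8}$ and the dangerous piece lands exactly at $(-\LL_0)^{1+\gamma}$), whereas you allow any $\gamma'\in(\max\{\gamma,1/2\},\min\{\gamma+1/8,3/4\})$ and then use $\gamma'+7/8<1+\gamma$ to pass to the target regularity; this costs you one extra but harmless interpolation.
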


\begin{proof}
	The variation of constants formula gives $\varphi^{m,\sharp}(t) = S_t \varphi^{m,\sharp}_0 + \int_0^t S_{t-s} (\partial_s - \LL_0) \varphi^{m,\sharp}(s) \dd s$, and by writing the explicit representation of $S_t$ and $\LL_0$ in Fourier variables we easily see that
	\begin{equation*}
		\| (1+\NN)^\alpha (-\LL_0)^\beta S_t \psi \| \lesssim t^{-\beta} \|(1+\NN)^\alpha \psi \|	
	\end{equation*}
	for all $\beta \geqslant 0$. Since $\gamma + 1/8 \in (1/2, 3/4)$ we can combine this with our previous estimates, and in that way we obtain for some $K, K_T > 0$ and for $t \in [0,T]$
	\begin{align*}
		\|(1+\NN)^\alpha (-\LL_0)^{1+\gamma} \varphi^{m,\sharp}(t) \| & \lesssim \|(1+\NN)^\alpha (-\LL_0)^{1+\gamma} \varphi^{m,\sharp}_0 \| \\
		&\quad + \int_0^t (t-s)^{-1+1/8} \|(1+\NN)^\alpha (-\LL_0)^{\gamma + 1/8} (\partial_s - \LL_0) \varphi^{m,\sharp}(s) \| \dd s \\
		& \leqslant K \|(1+\NN)^\alpha (-\LL_0)^{1+\gamma} \varphi^{m,\sharp}_0 \|  + K_T (1+\varepsilon^{-p}) \| (1 + \NN)^p (-\LL_0) \varphi^{m,\sharp}_0\| \\
		& \quad + K T^{1/8} \varepsilon \sup_{s \in [0,T]} \|(1 + \NN)^\alpha (-\LL_0)^{1+\gamma} \varphi^{m,\sharp}(s) \|.
	\end{align*}
	The right hand side does not depend on $t$, and therefore we can take the supremum over $t \in [0,T]$, and then we choose $\varepsilon > 0$ small enough so that $K T^{1/8} \varepsilon \leqslant 1/2$ and we bring the last term on the right hand side to the left and thus we obtain the claimed bound for the spatial regularity. For the temporal regularity, i.e. for $\partial_t \varphi^{m,\sharp}$, we simply use that
	\[
		\partial_t \varphi^{m,\sharp} = \LL_0 \varphi^{m,\sharp} + (\partial_t - \LL_0) \varphi^{m,\sharp}
	\]
	and apply the previous bounds to the two terms on the right hand side.
\end{proof}

For $s, t \in [0, T]$ we now interpolate the
two estimates
\[ \| (1 + \NN)^{\alpha} (-\LL_0)^\gamma (\varphi^{m, \sharp} (t) - \varphi^{m, \sharp}
   (s)) \| \lesssim_T | t - s | \times \| (1 + \NN)^{p}
   (-\LL_0)^{1 + \gamma} \varphi^{m, \sharp}_0 \| \]
and
\[ \| (1 + \NN)^{\alpha} (-\LL_0)^{1 + \gamma} (\varphi^{m,
   \sharp} (t) - \varphi^{m, \sharp} (s)) \| \lesssim_T \| (1 + \NN)^{p} (-\LL_0)^{1 + \gamma} \varphi^{m, \sharp}_0
   \| \]
to obtain some $\kappa \in (0,1)$ such
that
\[ \| (1 + \NN)^{\alpha} (-\LL_0)^{1 + \gamma/2 }
   (\varphi^{m, \sharp} (t) - \varphi^{m, \sharp} (s)) \| \lesssim | t - s
   |^{\kappa} \times \| (1 + \NN)^{p} (-\LL_0)^{1 +
   \gamma} \varphi^{m, \sharp}_0 \| . \]
   
In conclusion, if for $\alpha>0$ we introduce the set
\begin{equation}
\label{eq:kol-set}
\CK_\alpha := \bigcup_{\gamma \in (3/8, 5 / 8)} \KK (1 + \NN)^{- p(\alpha,\gamma)}
  (-\LL_0)^{- 1 - \gamma} \Gamma L^2 \subseteq \Gamma L^2
\end{equation}   
where $p(\alpha,\gamma)$ is as above, then we can state the existence of strong solutions to the
Kolmogorov backward equation for initial conditions in $\CK := \CK_{9/2+} :=\bigcup_{\alpha>9/2}\CK_{\alpha}$:

\begin{theorem}
  \label{thm:bw-eq}Let $\alpha \geqslant 0$ and $\varphi_0 \in \CK_\alpha$. Then there exists a solution 
  \[
  		\varphi \in \bigcup_{\delta > 0} C (\R_+, (1 + \NN)^{- \alpha + \delta} (-\LL_0)^{- 1 } \Gamma L^2)
  \]
  of the backward equation
  \begin{equation}\label{eq:bw-eq}
  	\partial_t \varphi =\LL \varphi
  \end{equation}
  with initial condition
  $\varphi (0) =\varphi_0$. For $\alpha > 9/2$ we have $\varphi \in C(\R_+, \CD(\LL)) \cap C^1(\R, \Gamma L^2)$ and by dissipativity of $\LL$ this solution is unique. 
  \end{theorem}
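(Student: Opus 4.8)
The plan is to construct the solution as a limit of the Galerkin backward flows $\varphi^m(t) = T^m_t \varphi^m_0$, exploiting the uniform-in-$m$ a priori bounds of Section~\ref{sec:bw-controlled}, and then to read off uniqueness in the class $C(\R_+,\CD(\LL)) \cap C^1(\R_+,\Gamma L^2)$ directly from the dissipativity of $\LL$ (Lemma~\ref{lem:dissipative}). Concretely, given $\varphi_0 \in \CK_\alpha$, by~\eqref{eq:kol-set} I would pick $\gamma \in (3/8,5/8)$ and $\varphi_0^\sharp$ with $\|(1+\NN)^{p(\alpha,\gamma)}(-\LL_0)^{1+\gamma}\varphi_0^\sharp\| < \infty$ and $\varphi_0 = \KK\varphi_0^\sharp$, where $p(\alpha,\gamma)$ is the exponent from the regularity lemma at the end of Section~\ref{sec:bw-controlled}. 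For the approximations I keep the \emph{same} remainder: set $\varphi^m_0 \assign \KK^m \varphi_0^\sharp$, which by Remark~\ref{rmk:controlled-fct-m} is well defined and lies in $\CD(\LL^m)$, and which converges to $\varphi_0$ in the relevant controlled norms as $m \to \infty$ (subtract the two fixed-point identities, use that $I - (-\LL_0)^{-1}\GG^{m,\succ}$ is uniformly invertible by~\eqref{eq:controlled-fct-1}, and use $\GG^{m,\succ} \to \GG^\succ$ chaos by chaos together with the uniform tail control of~\eqref{eq:controlled-fct-1}).

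Next I would set $\varphi^m(t) \assign T^m_t\varphi^m_0$, which by Corollary~\ref{cor:bw-eq} lies in $C^1(\R_+,\Gamma L^2)$ and solves $\partial_t \varphi^m = \LL^m \varphi^m$, and define $\varphi^{m,\sharp} \assign \varphi^m - (-\LL_0)^{-1}\GG^{m,\succ}\varphi^m$, so that $\varphi^m = \KK^m\varphi^{m,\sharp}$ and $\varphi^{m,\sharp}(0) = \varphi_0^\sharp$. The analysis of Section~\ref{sec:bw-controlled} --- Corollary~\ref{cor:bw-apriori-2}, the bound~\eqref{eq:bw-controlled-1}, the regularity lemma at the end of the section and the interpolation following it --- then provides, uniformly in $m$ and for every $T>0$, a constant $\kappa \in (0,1)$ with
\[
  \sup_{t\in[0,T]} \|(1+\NN)^{\alpha}(-\LL_0)^{1+\gamma}\varphi^{m,\sharp}(t)\| + \sup_{\substack{s,t\in[0,T]\\ s\neq t}} \frac{\|(1+\NN)^{\alpha}(-\LL_0)^{1+\gamma/2}(\varphi^{m,\sharp}(t)-\varphi^{m,\sharp}(s))\|}{|t-s|^{\kappa}} \lesssim_T \|(1+\NN)^{p(\alpha,\gamma)}(-\LL_0)^{1+\gamma}\varphi_0^\sharp\|,
\]
together with uniform bounds on $\|(1+\NN)^{\alpha}\LL^m\varphi^m(t)\|$ and on $\varphi^m(t)$ in $(1+\NN)^{-\alpha}(-\LL_0)^{-1}\Gamma L^2$, locally uniformly in $t$.

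The functional-analytic input I need is that $(1+\NN)^{-\delta}(-\LL_0)^{-\varepsilon}$ is a compact operator on $\Gamma L^2$ for all $\delta,\varepsilon>0$: it is the operator-norm limit of $\1_{\NN\leqslant M}(1+\NN)^{-\delta}(-\LL_0)^{-\varepsilon}$, which is compact because $(-\LL_0)^{-\varepsilon}$ is compact on each fixed chaos $L^2_0(\T^n)$, while the tail has norm $\sup_{n>M}(1+n)^{-\delta}(4\pi^2 n)^{-\varepsilon}\to 0$. Hence for small $\delta>0$ the embedding of $X_0 \assign (1+\NN)^{-\alpha}(-\LL_0)^{-1-\gamma}\Gamma L^2$ into $Y \assign (1+\NN)^{-\alpha+\delta}(-\LL_0)^{-1-\gamma/2}\Gamma L^2$ is compact, and $Y$ is weaker than both $X_0$ and the space carrying the Hölder bound, so by Arzelà--Ascoli (equicontinuity from the Hölder estimate, pointwise relative compactness from the compact embedding) plus a diagonal extraction in $T$, a subsequence satisfies $\varphi^{m_j,\sharp}\to\psi$ in $C(\R_+;Y)$, with $\psi$ bounded in $X_0$ locally uniformly in time. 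Setting $\varphi \assign \KK\psi$, the stability estimate~\eqref{eq:controlled-fct-estimate} for $\KK,\KK^m$ and $\GG^{m,\succ}\to\GG^\succ$ give $\varphi^{m_j}\to\varphi$ and $(-\LL_0)^{-1}\GG^{m_j,\succ}\varphi^{m_j}\to(-\LL_0)^{-1}\GG^\succ\varphi$, hence $\psi = \varphi - (-\LL_0)^{-1}\GG^\succ\varphi = \varphi^\sharp$. I would then pass to the limit in the mild formulation
\[
  \varphi^{m,\sharp}(t) = S_t\varphi_0^\sharp + \int_0^t S_{t-s}\big(\GG^{m,\prec}\varphi^m(s) - (-\LL_0)^{-1}\GG^{m,\succ}\partial_s\varphi^m(s)\big)\dd s
\]
(with $S_t$ the semigroup generated by $\LL_0$ and $\partial_s\varphi^m = \LL_0\varphi^{m,\sharp} + \GG^{m,\prec}\varphi^m$), using the uniform bounds~\eqref{eq:bw-controlled-1}, the convergences $\GG^{m_j,\prec}\to\GG^\prec$, $\GG^{m_j,\succ}\to\GG^\succ$ on fixed chaoses, and dominated convergence in the chaos index; this shows $\psi$ solves the limiting mild equation, i.e. $\partial_t\varphi = \LL_0\varphi^\sharp + \GG^\prec\varphi = \LL\varphi$ with $\varphi(0)=\varphi_0$, where $\LL\varphi$ is the well-defined element of a weighted distribution space guaranteed by Proposition~\ref{prop:domain}. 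Since $1+\gamma/2>1$, this already gives $\varphi = (-\LL_0)^{-1}\GG^\succ\varphi + \psi \in C(\R_+;(1+\NN)^{-\alpha+\delta}(-\LL_0)^{-1}\Gamma L^2)$, the first assertion.

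For $\alpha>9/2$ and $\delta$ small, both $X_0$ and $Y$ embed into the domain-defining space $(1+\NN)^{-1}(-\LL_0)^{-1}\Gamma L^2 \cap (1+\NN)^{-9/2}(-\LL_0)^{-1/2}\Gamma L^2$, with $X_0$ embedding via strictly better exponents; combining continuity of $t\mapsto\varphi^\sharp(t)$ in $Y$ with boundedness in $X_0$, and approximating uniformly by the finite-rank projections $\1_{\NN\leqslant M}\1_{(-\LL_0)\leqslant R}$, yields continuity of $t\mapsto\varphi^\sharp(t)$ in the domain-defining space, so $\varphi=\KK\varphi^\sharp \in C(\R_+;\CD(\LL))$; moreover $\LL\varphi = \LL_0\varphi^\sharp + \GG^\prec\varphi \in C(\R_+;\Gamma L^2)$ by Proposition~\ref{prop:domain} (with $\gamma=0$) and continuity of $\varphi^\sharp$, so $\varphi\in C^1(\R_+;\Gamma L^2)$ and the equation holds in $\Gamma L^2$. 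For uniqueness I would take two such solutions $\varphi^{(1)},\varphi^{(2)}$ with the same datum; since $\CD(\LL)$ is linear, $\eta\assign\varphi^{(1)}-\varphi^{(2)} \in C(\R_+;\CD(\LL))$ solves $\partial_t\eta=\LL\eta$, $\eta(0)=0$, and Lemma~\ref{lem:dissipative} gives $\tfrac12\partial_t\|\eta(t)\|^2 = \langle\eta(t),\LL\eta(t)\rangle = -\|(-\LL_0)^{1/2}\eta(t)\|^2 \leqslant 0$, so $\|\eta(t)\|\leqslant\|\eta(0)\|=0$. The heavy analytic work --- the uniform-in-$m$ a priori estimates for $\varphi^{m,\sharp}$ --- is already done in Sections~\ref{sec:bw-apriori}--\ref{sec:bw-controlled}; the part I expect to be most delicate is organizational: choosing the scale of weighted Fock spaces so that the embeddings are compact in exactly the (small) $\NN$- and $(-\LL_0)$-directions in which the Hölder-in-time bound is weaker than the uniform-in-time bound, and then tracking all weights carefully when passing to the limit in the non-local operators $\GG^{m,\prec}$, $\GG^{m,\succ}$ and in the time derivative $\partial_s\varphi^m$.
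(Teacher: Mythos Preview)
Your proposal is correct and follows essentially the same approach as the paper: the same Galerkin setup $\varphi^m_0=\KK^m\varphi_0^\sharp$, the same compactness of $(\varphi^{m,\sharp})_m$ via Arzel\`a--Ascoli and a compact embedding of weighted Fock spaces, and the same uniqueness via dissipativity. The one simplification in the paper is that it passes to the limit directly in the integrated form $\varphi^m(t)-\varphi^m(0)=\int_0^t(\LL_0\varphi^{m,\sharp}+\GG^{m,\prec}\KK^m\varphi^{m,\sharp})\,\dd s$ rather than in the mild equation for $\varphi^{m,\sharp}$, which avoids your detour through the term $(-\LL_0)^{-1}\GG^{m,\succ}\partial_s\varphi^m$; your finite-rank projection argument for the $\alpha>9/2$ continuity claim is also not needed, since the target space of the Arzel\`a--Ascoli step already embeds continuously into the domain-defining space.
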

  
\begin{proof} Take $\varphi_0 \in \CK_\alpha$ and
denote $\varphi^{\sharp}_0 = \KK^{-1}\varphi_0 \in (1 + \NN)^{- p}
  (-\LL_0)^{- 1 - \gamma} \Gamma L^2$  for some 
$\gamma \in (3/8, 5 / 8)$ and $p = p(\alpha,\gamma)$. 
Consider for $m \in \N$ the solution $\varphi^m$ to $\partial_t \varphi^m
  =\LL^m \varphi^m$ with initial condition $\varphi^m (0)
  =\KK^m \varphi^{\sharp}_0$. It follows from a diagonal sequence argument that bounded sets in $(1 + \NN)^{-\alpha} (-\LL_0)^{-1-\gamma/2} \Gamma L^2$ are relatively compact in $(1 + \NN)^{-\alpha+\delta} (-\LL_0)^{-1} \Gamma L^2$ for $\delta >0$, and thus $(\varphi^{m,\sharp})_m$ is relatively compact in $C (\R_+, (1 + \NN)^{- \alpha + \delta} (-\LL_0)^{- 1 } \Gamma L^2)$ (equipped with the topology of uniform convergence on compacts) by the infinite-dimensional version of the Arzel\`a-Ascoli theorem. If $\varphi^{\sharp}$ is a limit point we let
  $\varphi =\KK \varphi^{\sharp}$.
  To see that $\partial_t \varphi =\LL
  \varphi$, note that (along the convergent subsequence, which we omit from
  the notation for simplicity)
  \begin{align*}
    \varphi(t) - \varphi(0) & = \lim_{m \rightarrow \infty} (\varphi^m(t) - \varphi^m(0)) =
    \lim_{m \rightarrow \infty} \int_0^t \LL^m \varphi^m(s) \dd s \\
    & = \lim_{m \rightarrow
    \infty} \int_0^t (\LL_0 \varphi^{m, \sharp}(s) +\GG^{m,\prec} \KK^m \varphi^{m,\sharp}(s)) \dd s \\
    & = \lim_{m \rightarrow
    \infty} \int_0^t (\LL_0 \varphi^{\sharp}(s) +\GG^{m,\prec} \KK^m \varphi^{\sharp}(s)) \dd s \\
    & = \int_0^t (\LL_0 \varphi^{\sharp}(s) +\GG^{\prec} \KK \varphi^{\sharp}(s)) \dd s,
  \end{align*}
  where the second-to-last step follows from our uniform bounds on
  $\LL_0, \GG^{m,\prec}, \KK^m$ and the convergence of $\varphi^{m, \sharp}$ to $\varphi^{\sharp}$, and the last
  step follows from our bounds for $\GG^{\prec}, \KK$ together with the dominated convergence theorem. If $\alpha > 9/2$, then $\varphi \in \CD(\LL)$ by definition, see Lemma~\ref{lem:dom-def}. Moreover, in that case $\LL \varphi \in C(\R_+, \Gamma L^2)$ and since $\varphi(t) - \varphi(s) = \int_s^t \LL \varphi(r) \dd r$ we get $\varphi \in C^1(\R_+, \Gamma L^2)$. In this last case we can compute as follows 
  $$
  \partial_t \|\varphi(t)\|^2 =2  \langle\varphi(t),\LL \varphi(t)  \rangle \leqslant 0,
  $$
  using the dissipativity of $\LL$ (Lemma~\ref{lem:dissipative}). Therefore we conclude that for any solution we have $\|\varphi(t)\|\leqslant \|\varphi_0\|$ which together with the linearity of the equation gives uniqueness.
\end{proof}

\begin{remark}
	We focused on the backward equation, but by similar (and actually slightly easier) arguments we can also solve the resolvent equation $(\lambda - \LL) \varphi = \psi$ for $\lambda > C/2$ and $\psi \in \CK$, where $C>0$ is the constant from Corollary~\ref{cor:bw-apriori-2}. Since $\CK$ and $\CD(\LL)$ are dense and $\LL$ is dissipative by Lemma~\ref{lem:dissipative}, it  follows from Theorem~1.2.12 of \cite{Ethier1986} that $\LL$ generates a strongly continuous contraction semigroup on $L^2(\mu)$. Then we can apply Kolmogorov's extension theorem to construct, for all initial distributions with $L^2$ density with respect to $\mu$, a Markov process corresponding to this semigroup. However, it seems a bit subtle how to get the continuity of trajectories or the link with the martingale problem in this way. To be in the setting of~\cite{Ethier1986} we would need a semigroup on $C_b(E)$ for a locally compact and separable state space $E$, but since we are in infinite dimensions our state space cannot be locally compact. A canonical state space would be $H^{-1/2-}(\T)$, but it also seems difficult to show that $T_t$ maps $C_b(H^{-1/2-})$ to itself, let alone that it defines a semigroup on that space. So instead we will construct the process directly by a tightness argument based on the martingale problem.
\end{remark}

\section{The martingale problem}\label{sec:mp}

\begin{definition}\label{def:mart-prob}
We say that a process $(u_t)_{t \geqslant 0}$ with trajectories in $C
(\R_+, \CS')$, where $\CS'$ are the Schwartz
distributions on $\T$, \emph{solves the martingale problem for
$\LL$ with initial distribution $\nu$} if $u_0 \sim \nu$, if
$\tmop{law} (u_t) \ll \eta$ for all $t \geqslant 0$, and if for all $\varphi
\in \CD (\LL)$ and $t \geqslant 0$ we have $\int_0^t | \LL \varphi (u_s)
| \dd s < \infty$ almost surely and the process
\[ \varphi (u_t) - \varphi (u_0) - \int_0^t \LL \varphi (u_s) \dd
   s, \qquad t \geqslant 0, \]
is a martingale in the filtration generated by $(u_t)$. Note that since
$\varphi$ and $\LL \varphi$ are not cylinder functions we need the
condition $\tmop{law} (u_t) \ll \eta$ in order for $\varphi (u_t)$ and
$\LL \varphi (u_t)$ to be well defined.
\end{definition}

Due to our lack of  control for $\LL\varphi$ outside of $\Gamma L^2$, the following class of processes will play a major role in our study of the martingale problem.
\begin{definition}\label{def:incompressible}
We say that a process $(u_t)_{t\geqslant 0}$ with values in $\CS'$ is  \emph{incompressible} if for all $T>0$ there exists $C(T) > 0$ such that for all $\varphi \in \Gamma L^2$
\[
\sup_{t \le T} \E[|\varphi(u_t)|] \leqslant C(T) \| \varphi \|.
\]
\end{definition}

We will establish the existence of incompressible solutions to the martingale problem  by a compactness argument. The duality of martingale problem and backward equation gives uniqueness of incompressible solutions to the martingale problem. Since the domain of $\LL$ is rather complicated, we then study a ``cylinder function martingale problem'', a generalization of the energy solutions of~\cite{Goncalves2014,Gubinelli2013,Gubinelli2018Energy}, and we show that every solution to the cylinder function martingale problem solves the martingale problem for $\LL$ and in particular its law is unique.

\subsection{Existence of solutions}\label{sec:mp-ex}

In the following we show that under ``near-stationary'' initial
conditions the Galerkin approximations $(u^m)_m$ solving~\eqref{eq:burgers-galerkin} are tight in $C
(\R_+, \CS')$, and that any weak limit is an incompressible solution to the martingale
problem for the generator $\LL$ in the sense of Definitions~\ref{def:mart-prob} and~\ref{def:incompressible}. 
The following elementary inequality
will be used throughout this section:

\begin{lemma}
  \label{lem:switch-measure}Let $u^m$ be a solution
  to~(\ref{eq:burgers-galerkin}) with $\tmop{law} (u^m_0) \ll \mu$ with
  density $\eta \in L^2 (\mu)$. Then we have for any $\Psi\colon C (\R_+,
  \CS') \rightarrow \R$
  \[ \E [\Psi (u^m)] \leqslant \| \eta \| \E_{\mu} [\Psi
     (u^m)^2]^{1 / 2}, \]
  where $\P_{\mu}$ denotes the distribution of $u^m$ under the
  stationary initial condition $u^m_0 \sim \mu$. In particular $u^m$ is incompressible.
\end{lemma}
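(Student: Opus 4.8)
The plan is to disintegrate the law of the trajectory $u^m$ over its initial condition and then exploit the invariance of $\mu$ through a Cauchy--Schwarz estimate. First I would reduce to nonnegative $\Psi$, which is harmless since $\E[\Psi(u^m)] \leqslant \E[|\Psi(u^m)|]$ while $\E_\mu[|\Psi(u^m)|^2] = \E_\mu[\Psi(u^m)^2]$.

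Since the law of $u^m$ is the mixture $\int \P_u\,\eta(u)\,\mu(\dd u)$, I would write $\E[\Psi(u^m)] = \int \eta(u)\,\E_u[\Psi(u^m)]\,\mu(\dd u) = \langle \eta, \Phi \rangle$ with $\Phi(u) \assign \E_u[\Psi(u^m)]$, and bound this by $\|\eta\|\,\|\Phi\|$ via Cauchy--Schwarz in $L^2(\mu)$. It then remains to control $\|\Phi\|$: Jensen's inequality gives $\Phi(u)^2 = (\E_u[\Psi(u^m)])^2 \leqslant \E_u[\Psi(u^m)^2]$ pointwise in $u$, and integrating against $\mu$ and using the invariance of $\mu$ for $u^m$ yields $\|\Phi\|^2 \leqslant \int \E_u[\Psi(u^m)^2]\,\mu(\dd u) = \E_\mu[\Psi(u^m)^2]$. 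Combining the two estimates gives the claimed inequality.

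For the incompressibility statement I would fix $T > 0$, $t \in [0,T]$ and $\varphi \in \Gamma L^2 = L^2(\mu)$ and apply the inequality to the functional $\Psi(w) \assign |\varphi(w_t)|$ on $C(\R_+,\CS')$. Using that $u^m_t \sim \mu$ under $\P_\mu$, the right-hand side becomes $\|\eta\|\,\E_\mu[|\varphi(u^m_t)|^2]^{1/2} = \|\eta\|\,\|\varphi\|$, so $\sup_{t \leqslant T} \E[|\varphi(u^m_t)|] \leqslant \|\eta\|\,\|\varphi\|$ and Definition~\ref{def:incompressible} holds with $C(T) = \|\eta\|$ (in fact independent of $T$).

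The argument is soft and I do not expect a genuine obstacle. The one point that deserves a line of care is that $\varphi(u^m_t)$ be well defined under the non-stationary law, i.e.\ that $\tmop{law}(u^m_t) \ll \mu$: invariance of $\mu$ gives $\int \tmop{law}_u(u^m_t)\,\mu(\dd u) = \mu$, so for $\mu$-a.e.\ $u$ the measure $\tmop{law}_u(u^m_t)$ charges no $\mu$-null set, and since $\eta$ is $\mu$-a.e.\ finite the same persists after integrating against $\eta\,\mu$; one then fixes an arbitrary measurable representative of $\varphi$.
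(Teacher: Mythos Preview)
Your proof is correct and follows essentially the same route as the paper: disintegrate the law over the initial condition, apply Cauchy--Schwarz in $L^2(\mu)$, and then Jensen's inequality to pass from $\E_u[\Psi(u^m)]^2$ to $\E_u[\Psi(u^m)^2]$. You spell out the incompressibility conclusion and the absolute continuity of $\tmop{law}(u^m_t)$ more explicitly than the paper does, but the core argument is identical.
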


\begin{proof}
  The Cauchy-Schwarz inequality and Jensen's inequality yield
  \[ \E [\Psi (u^m)] = \int \E_u [\Psi (u^m)] \eta (u) \mu
     (\dd u) \leqslant \| \eta \| \left( \int \E_u [\Psi (u^m)]^2
     \mu (\dd u) \right)^{1 / 2} \leqslant \| \eta \| \E_{\mu}
     [\Psi (u^m)^2]^{1 / 2} . \]
  
\end{proof}

Recall that $D_x$ denotes the Malliavin derivative.

\begin{lemma}
  Let $u^m$ be a solution to~(\ref{eq:burgers-galerkin}) with $\tmop{law}
  (u^m_0) \ll \mu$ with density $\eta \in L^2 (\mu)$. Let $\varphi \in \CD (\LL^m)$ and consider $M^{m, \varphi}_t \assign \varphi (u^m_t) - \varphi (u^m_0) - \int_0^t
     \LL^m \varphi (u^m_s) \dd s$. Then $M^{m, \varphi}$ is a continuous martingale with quadratic variation
  \begin{equation}
    \langle M^{m, \varphi} \rangle_t = \int_0^t \CE \varphi (u_m (s))
    \dd s, \qquad \tmop{where} \qquad \CE \varphi = 2
    \int_{\T} | \partial_x D_x \varphi |^2 \dd x.
  \end{equation}
  Moreover, for $w \colon \N_0 \rightarrow \R_+$ we have
  \begin{equation}\label{eq:martingale-qv-bound}
    \| w (\NN) (\CE \varphi)^{1 / 2} \| = \sqrt{2} \| w
    (\NN- 1) (-\LL_0)^{1 / 2} \varphi \|.
  \end{equation}
\end{lemma}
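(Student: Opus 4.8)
The plan is to prove the three structural claims — continuity of $M^{m,\varphi}$, the martingale property, and the stated form of the quadratic variation — first for smooth cylinder functions, where they are a direct consequence of the It\^o formula recorded in Section~\ref{sec:domain}, and then to pass to general $\varphi\in\CD(\LL^m)$ by approximation; the norm identity~\eqref{eq:martingale-qv-bound} will be handled separately by a bare-hands chaos computation. I would work throughout under the stationary law $\P_\mu$: since the initial law of $u^m$ under $\P$ is absolutely continuous with respect to $\mu$ with density $\eta\in L^2(\mu)$, the path law under $\P$ is absolutely continuous with respect to that under $\P_\mu$ with an $\CF_0$-measurable density; hence continuity and the quadratic variation transfer automatically (being pathwise), and the martingale property transfers because that density depends only on the initial datum.

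For a cylinder function $\varphi(u)=\Phi(u(f_1),\dots,u(f_n))$ the expression for $\dd\varphi(u^m_t)$ derived before Corollary~\ref{cor:bw-eq} already shows that $M^{m,\varphi}$ is a continuous martingale; by polarization of the identity $\langle M(f_i)\rangle_t=2\|\partial_x f_i\|_{L^2(\T)}^2 t$ its quadratic variation is $\int_0^t 2\sum_{i,j}(\partial_i\Phi\,\partial_j\Phi)(u^m_s(f_1),\dots,u^m_s(f_n))\,\langle\partial_x f_i,\partial_x f_j\rangle_{L^2(\T)}\dd s$. Since $D_x\varphi(u)=\sum_i\partial_i\Phi(u(f_1),\dots,u(f_n))f_i(x)$, this equals $\int_0^t 2\int_\T|\partial_x D_x\varphi(u^m_s)|^2\dd x\,\dd s=\int_0^t\CE\varphi(u^m_s)\dd s$, so the cylinder case is done.

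For general $\varphi=\KK^m\varphi^\sharp\in\CD(\LL^m)$ I would first note the crucial point that, for fixed $m$, $\GG^m\varphi$ and $(-\LL_0)\varphi$ lie separately in $L^2(\mu)$, so that $\LL^m\varphi=\LL_0\varphi+\GG^m\varphi\in L^2(\mu)$ (which under $\P_\mu$ makes $M^{m,\varphi}_t$ square-integrable): indeed, by the definition of $\CD(\LL^m)$ and Lemma~\ref{lem:controlled-fct} one has $\|(1+\NN)^{9/2}(-\LL_0)^{1/2}\varphi\|<\infty$, so the $m$-dependent bound~\eqref{eq:G-m-dependent} gives $\GG^m\varphi\in L^2(\mu)$, and then $(-\LL_0)\varphi=(-\LL_0)\varphi^\sharp-\GG^{m,\succ}\varphi\in L^2(\mu)$ because $\varphi^\sharp\in(-\LL_0)^{-1}\Gamma L^2$. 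To obtain the three claims I would take $\varphi_k$ the projection of $\varphi$ onto chaoses of degree $\leqslant k$ with Fourier modes of modulus $\leqslant k$ in each variable; these are cylinder functions, $\varphi_k\to\varphi$, $(-\LL_0)^{1/2}\varphi_k\to(-\LL_0)^{1/2}\varphi$ and $(-\LL_0)\varphi_k\to(-\LL_0)\varphi$ in $L^2(\mu)$, while $\GG^m\varphi_k\to\GG^m\varphi$ in $L^2(\mu)$ because by~\eqref{eq:G-m-dependent} the map $\GG^m$ is bounded from $(1+\NN)^{-1}(-\LL_0)^{-1/2}\Gamma L^2$ to $\Gamma L^2$ and $\varphi_k\to\varphi$ in that space; hence $\LL^m\varphi_k\to\LL^m\varphi$ in $L^2(\mu)$. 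Consequently $M^{m,\varphi_k}\to M^{m,\varphi}$ in $L^2(\P_\mu)$, uniformly on compact time intervals by Doob's inequality, so $M^{m,\varphi}$ is a continuous square-integrable martingale; and from $\CE\varphi_k-\CE\varphi=2\int_\T\partial_x D_x(\varphi_k-\varphi)\,\partial_x D_x(\varphi_k+\varphi)\dd x$, Cauchy--Schwarz in $\mu\otimes\dd x$, and the $w\equiv1$ case of~\eqref{eq:martingale-qv-bound}, one gets $\E_\mu|\CE\varphi_k-\CE\varphi|\lesssim\|(-\LL_0)^{1/2}(\varphi_k-\varphi)\|\,\|(-\LL_0)^{1/2}(\varphi_k+\varphi)\|\to0$, so uniqueness of the quadratic variation yields $\langle M^{m,\varphi}\rangle_t=\int_0^t\CE\varphi(u^m_s)\dd s$; the statement then transfers to $\P$ via Lemma~\ref{lem:switch-measure} as explained above.

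For the identity~\eqref{eq:martingale-qv-bound} I would expand $\varphi=\sum_n W_n(\varphi_n)$ with symmetric kernels and use $D_x W_n(\varphi_n)=n W_{n-1}(\varphi_n(x,\cdot))$ to get $\partial_x D_x\varphi=\sum_{n\geqslant1}n W_{n-1}(\partial_1\varphi_n(x,\cdot))$, with $\partial_1$ the derivative in the first argument. Since the Malliavin derivative lowers the chaos degree by one, the contribution of $\varphi_n$ to $\CE\varphi$ sits in chaos degree $n-1$ — this is exactly why $w(\NN)$ on the left becomes $w(\NN-1)$ on the right — and by orthogonality of distinct chaoses $\|w(\NN)(\CE\varphi)^{1/2}\|^2=2\sum_{n\geqslant1}w(n-1)^2\,n^2(n-1)!\,\|\partial_1\varphi_n\|_{L^2(\T^n)}^2$; using the symmetry of $\varphi_n$ to write $\|\partial_1\varphi_n\|_{L^2(\T^n)}^2=\tfrac1n\sum_{i=1}^n\|\partial_i\varphi_n\|_{L^2(\T^n)}^2=\tfrac1n\|(-\LL_0)^{1/2}\varphi_n\|_{L^2(\T^n)}^2$ turns $n^2(n-1)!\|\partial_1\varphi_n\|^2$ into $n!\,\|(-\LL_0)^{1/2}\varphi_n\|^2$, and summing gives $\sqrt2\,\|w(\NN-1)(-\LL_0)^{1/2}\varphi\|$. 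I expect the only genuine obstacle to be the approximation step in the third paragraph: it relies on the fact that for fixed $m$ the \emph{entire} drift $\GG^m\varphi$ — not merely its low-frequency part $\GG^{m,\prec}\varphi$ controlled in Proposition~\ref{prop:domain} — belongs to $L^2(\mu)$, which is precisely what the $m$-dependent estimate~\eqref{eq:G-m-dependent} supplies.
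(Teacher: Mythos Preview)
Your proof is correct and follows essentially the same route as the paper: It\^o's formula for cylinder functions, approximation of $\varphi\in\CD(\LL^m)$ by cylinder functions converging in $(1+\NN)^{-1}(-\LL_0)^{-1/2}\Gamma L^2$, and the chaos computation for the energy identity. The only cosmetic differences are that the paper works directly under $\P$ (using Lemma~\ref{lem:switch-measure} and BDG to control $\sup_t|M^{m,\varphi^M}_t|$ via the energy, rather than Doob in $L^2(\P_\mu)$), and it does not spell out as explicitly as you do that $\LL_0\varphi$ and $\GG^m\varphi$ lie separately in $\Gamma L^2$ for $\varphi\in\CD(\LL^m)$ --- your use of~\eqref{eq:G-m-dependent} here is exactly the right observation.
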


\begin{proof}
  For cylinder functions $\varphi$ the claim follows from It{\^o}'s formula,
  and in that case the Burkholder-Davis-Gundy inequality gives for all $T > 0$
  \[ \E [\sup_{t \leqslant T} | M^{m, \varphi}_t |] \lesssim
     \E [\langle M^{m, \varphi} \rangle_T^{1 / 2}] \leqslant \| \eta
     \| \E_{\mu} [\langle M^{m, \varphi} \rangle_T]^{1/2} = \| \eta \| T^{1/2} \|
     (\CE \varphi)^{1 / 2} \|. \]
  The ``energy'' on the right hand side can be computed as
  \begin{align*}
    \| w (\NN) (\CE \varphi)^{1 / 2} \|^2 & = 2
    \int_{\T} \| w (\NN) \partial_x D_x \varphi \|^2 \dd
    x\\
    & = 2 \int_{\T} \left( \sum_{n = 1}^{\infty} (n - 1) !w (n -
    1)^2 n^2 \| \partial_x \varphi_n (x, r_{2 : n}) \|_{L^2_r (\T^{n
    - 1})}^2 \right) \dd x\\
    & = 2 \sum_{n = 1}^{\infty} n!w (n - 1)^2 n \sum_{k_{1 : n}} | 2 \pi k_1
    |^2 | \hat{\varphi}_n (k_{1 : n}) |^2\\
    & = 2 \sum_{n = 1}^{\infty} n!w (n - 1)^2 \sum_{k_{1 : n}} (| 2 \pi k_1
    |^2 + \cdots + | 2 \pi k_n |^2) | \hat{\varphi}_n (k_{1 : n}) |^2\\
    & = 2 \| w (\NN- 1) (-\LL_0)^{1 / 2} \varphi \|^2,
  \end{align*}
  so since $\CD (\LL^m) \subset (-\LL_0)^{- 1/2}(1 + \NN)^{-1}
  \Gamma L^2$ and cylinder functions are dense in $(-\LL_0)^{- 1/2} (1 + \NN)^{-1}
  \Gamma L^2$ by Proposition~\ref{prop:domain}, we deduce that if $(\varphi^M)_M \subset \CC$ converges
  in $(-\LL_0)^{- 1/2} (1 + \NN)^{-1} \Gamma L^2$ to $\varphi \in \CD
  (\LL^m)$, then $M^{m, \varphi^M}$ converges to a continuous
  martingale $M^{m, \varphi}$ with quadratic variation $\langle M^{m, \varphi}
  \rangle_t = \int_0^t \CE \varphi (u_m (s)) \dd s$. On the other
  hand it follows from the bounds in Lemma~\ref{lem:G-apriori} that $\varphi^M
  (u^m_t) - \varphi^M (u^m_0) - \int_0^t \LL^m \varphi^M (u^m_s)
  \dd s$ converges to $\varphi (u^m_t) - \varphi (u^m_0) - \int_0^t
  \LL^m \varphi (u^m_s) \dd s$, and therefore
  \[ M^{m, \varphi}_t = \varphi (u^m_t) - \varphi (u^m_0) - \int_0^t
     \LL^m \varphi (u^m_s) \dd s. \]
\end{proof}

To prove tightness we need to control higher moments, and for this purpose the following classical result is useful.

\begin{remark}
  \label{rmk:higher-moments}Let $p \geqslant 2$ and define $c_p \assign \sqrt{p - 1}$.
  It follows from the hypercontractivity of the Ornstein-Uhlenbeck semigroup,
  see \cite[Theorem 1.4.1]{Nualart2006}, that $\| | \varphi |^{p / 2} \|^2 \leqslant \|
  c_p^{\NN} \varphi \|^p$ for all $\varphi \in \Gamma L^2$.
\end{remark}

In Lemma~\ref{lem:dom-def} we defined a domain $\CD_w(\LL)$ of functions that are mapped to $w(\NN)^{-1} \Gamma L^2$ by $\LL$. If $w (n) = c_p^n$ for the constant $c_p > 0$ of Remark~\ref{rmk:higher-moments}, we write $\CD_p (\LL) := \CD_w(\LL)$ from now on.

\begin{theorem}\label{thm:mp-ex}
  Let $\eta \in L^2 (\mu)$ and let $u^m$ be the solution
  to~(\ref{eq:burgers-galerkin}) with $\tmop{law} (u^m_0) \sim \eta \dd
  \mu$. Then $(u^m)_{m \in \N}$ is tight in $C (\R_+,
  \CS')$ and any weak limit $u$ is incompressible and solves the martingale problem for
  $\LL$ with initial distribution $\eta \dd \mu$.
\end{theorem}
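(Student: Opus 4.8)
The plan is to establish tightness of the Galerkin approximations via Mitoma's criterion and then to identify any weak limit as an incompressible solution of the martingale problem.

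\textbf{Tightness.} Since $C^\infty(\T)$ is a nuclear Fréchet space, by Mitoma's theorem it suffices to show that for each fixed $f \in C^\infty(\T)$ the real-valued processes $(u^m(f))_m$ are tight in $C(\R_+, \R)$. For the cylinder function $\varphi(u) = u(f)$ — which lies in $\CD(\LL^m)$ for every finite $m$ — It\^o's formula gives
\[
	u^m_t(f) = u^m_0(f) + \int_0^t \big( u^m_s(\Delta f) + \GG^m\varphi(u^m_s)\big)\,\dd s + M^{m,\varphi}_t,
\]
with $M^{m,\varphi}$ a continuous martingale of deterministic quadratic variation $2\|\partial_x f\|_{L^2(\T)}^2\, t$. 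By Lemma~\ref{lem:switch-measure} it is enough to bound the increments of each term in $L^p(\P_\mu)$ for some $p > 2$, together with a moment of the initial value. Under $\P_\mu$, the initial value $u^m_0(f)$ and the linear drift $\int_s^t u^m_r(\Delta f)\,\dd r$ are controlled uniformly in $m$ by the Gaussianity of $\mu$; the martingale part is a fixed multiple of a Brownian motion; and for the nonlinear drift $\int_s^t \GG^m\varphi(u^m_r)\,\dd r$ I would invoke the It\^o trick (see e.g.\ \cite{Gubinelli2013, Komorowski2012}): its $L^p(\P_\mu)$-increments are bounded by $|t-s|^{1/2}$ times $\|(-\LL_0)^{-1/2}\GG^m\varphi\|$ (up to $c_p^\NN$ weights, which only contribute constants since $\GG^m\varphi$ sits in the second chaos), and by~\eqref{eq:Gplus-distributional} with $\gamma = 1/2$ this is $\lesssim \|f\|_{H^{1/2}}$, uniformly in $m$. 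Choosing $p = 4$, Kolmogorov's tightness criterion gives tightness of $(u^m(f))_m$, hence of $(u^m)_m$ in $C(\R_+, \CS')$; since the $u^m$ take values in $H^{-1/2-}(\T)$ with uniform moments there under $\P_\mu$, one also gets tightness in $C(\R_+, H^{-\alpha})$ for $\alpha > 1/2$.

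\textbf{Incompressibility of the limit.} Fix a weak limit $u$ along a subsequence. Lemma~\ref{lem:switch-measure} together with the stationarity of $u^m$ under $\P_\mu$ shows that $\sup_{t \le T}\E[|\psi(u^m_t)|] \le \|\eta\|\,\|\psi\|$ for all $\psi \in \Gamma L^2$, uniformly in $m$ and $T$. For $\psi$ a bounded continuous cylinder function this passes to the limit ($u \mapsto u_t$ is continuous on $C(\R_+,\CS')$), giving $\E[|\psi(u_t)|] \le \|\eta\|\,\|\psi\|$; since such $\psi$ are measure-determining and dense in $L^2(\mu) = \Gamma L^2$, the law of $u_t$ has an $L^2(\mu)$-density of norm $\le \|\eta\|$, so $\psi(u_t)$ is well defined for all $\psi \in \Gamma L^2$ and $\sup_{t \le T}\E[|\psi(u_t)|] \le \|\eta\|\,\|\psi\|$, i.e.\ $u$ is incompressible. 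Moreover $u_0 = \lim u^m_0 \sim \eta\,\dd\mu$, and $u$ has continuous trajectories in $\CS'$.

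\textbf{The martingale property.} It remains to check that for every $\varphi \in \CD(\LL)$ the process $M^\varphi_t := \varphi(u_t) - \varphi(u_0) - \int_0^t \LL\varphi(u_s)\,\dd s$ is a martingale in the filtration of $u$ (it is integrable, and $\int_0^t|\LL\varphi(u_s)|\,\dd s < \infty$ a.s., both by incompressibility). By incompressibility of $u$, if $\varphi^j \to \varphi$ and $\LL\varphi^j \to \LL\varphi$ in $\Gamma L^2$ then $M^{\varphi^j}_t \to M^\varphi_t$ in $L^1$, so it is enough to prove the martingale property for a dense class of $\varphi$; approximating $\varphi^\sharp = \KK^{-1}\varphi$ by truncations (in Fourier modes and chaos) in the two defining norms of $\CD(\LL)$ and using Lemma~\ref{lem:controlled-fct} and Proposition~\ref{prop:domain}, we may assume $\varphi = \KK\varphi^\sharp$ with $\varphi^\sharp$ as regular as we like. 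For such $\varphi$, set $\varphi^m := \KK^m\varphi^\sharp \in \CD(\LL^m)$; then $M^{m,\varphi^m}_t = \varphi^m(u^m_t) - \varphi^m(u^m_0) - \int_0^t \LL^m\varphi^m(u^m_s)\,\dd s$ is a martingale for $u^m$ by the preceding lemma, and the uniform controlled estimates together with the Fourier-space convergence $\GG^m \to \GG$ and dominated convergence (exactly as in the proof of Theorem~\ref{thm:bw-eq}) give $\varphi^m \to \varphi$ and $\LL^m\varphi^m \to \LL\varphi$ in $\Gamma L^2$. Fixing $0 \le s_1 < \dots < s_k \le s < t$ and $G\colon (\CS')^k \to \R$ bounded continuous, I would show $\E[(M^\varphi_t - M^\varphi_s)G(u_{s_1},\dots,u_{s_k})] = 0$ by an $\varepsilon$-argument: (i) replacing $\varphi^m, \LL^m\varphi^m$ by $\varphi, \LL\varphi$ inside $M^{m,\varphi^m}$ changes its expectation against $G$ by at most $C\|G\|_\infty(\|\varphi^m - \varphi\| + t\|\LL^m\varphi^m - \LL\varphi\|) \to 0$, using incompressibility of $u^m$ uniformly in $m$; (ii) replacing $\varphi, \LL\varphi$ by bounded continuous cylinder functions $\varphi^{(\ell)} \to \varphi$, $\psi^{(\ell)} \to \LL\varphi$ in $\Gamma L^2$ produces functionals that are bounded and continuous on $C(\R_+,\CS')$, so their expectations against $G$ converge as $m \to \infty$; (iii) the errors in (ii) are controlled by incompressibility (of $u^m$ uniformly, and of $u$) by $C\|G\|_\infty(\|\varphi - \varphi^{(\ell)}\| + t\|\LL\varphi - \psi^{(\ell)}\|) \to 0$. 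Chaining these with the martingale property of $M^{m,\varphi^m}$ gives $\E[(M^\varphi_t - M^\varphi_s)G(u_{s_1},\dots,u_{s_k})] = 0$, and since such $G(u_{s_1},\dots,u_{s_k})$ generate the filtration of $u$, $M^\varphi$ is a martingale there. Hence $u$ solves the martingale problem for $\LL$ with initial distribution $\eta\,\dd\mu$.

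\textbf{Main obstacle.} The delicate point is the limit transition in the last part: $\varphi$ and $\LL\varphi$ are merely elements of $L^2(\mu)$, hence discontinuous as functionals on the path space $C(\R_+,\CS')$, so weak convergence $u^m \Rightarrow u$ is by itself insufficient. The resolution rests on the two ingredients prepared earlier — the controlled-function calculus, which simultaneously yields $\varphi^m \to \varphi$, $\LL^m\varphi^m \to \LL\varphi$ in $\Gamma L^2$ while keeping $\varphi^m$ in the approximate domains $\CD(\LL^m)$, and the uniform incompressibility bound, which quantifies the error of replacing $L^2(\mu)$-functions by bounded continuous cylinder functions in terms of $\Gamma L^2$-distances. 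On the tightness side the analogous subtle point is the nonlinear drift $\int_0^t \GG^m\varphi(u^m_s)\,\dd s$: it is only the antisymmetry of $\GG$ (Lemma~\ref{lem:Gm}) built into the It\^o trick, combined with the distributional bound~\eqref{eq:Gplus-distributional}, that makes this term tight uniformly in $m$.
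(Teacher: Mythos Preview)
Your argument is correct and the identification of the limit as an incompressible solution of the martingale problem follows the paper almost verbatim (the $\varepsilon$-argument with the $\KK^m\varphi^\sharp$ approximation and bounded cylinder functions is exactly what the paper does).

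The genuine difference is in the tightness step. You control the nonlinear drift $\int_s^t \GG^m\varphi(u^m_r)\,\dd r$ for the linear test function $\varphi(u)=u(f)$ directly via the It\^o trick (under $\P_\mu$, then transferred by Lemma~\ref{lem:switch-measure}), together with the bound~\eqref{eq:Gplus-distributional} for $\gamma=1/2$. The paper instead avoids the It\^o trick entirely and uses the controlled-domain machinery: it invokes the quantitative density estimate~\eqref{eq:quantitative-density} to approximate $\varphi(u)=u(f)$ by $\varphi^M\in\CD_{2p}(\LL^m)$, bounds $\E[|\varphi^M(u^m_t)-\varphi^M(u^m_s)|^p]$ by splitting into $\int_s^t\LL^m\varphi^M$ and the martingale, and then optimises over $M=|t-s|^{-1}$ to obtain the $|t-s|^{p/2}$ increment bound. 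Your route is shorter and more direct for linear $\varphi$, but it imports the It\^o trick for the Galerkin approximation as an external ingredient (standard, but not proved in this paper); the paper's route is fully self-contained and, as a byproduct, shows that the domain $\CD_w(\LL^m)$ already suffices to produce Kolmogorov-type increment bounds for a much larger class of observables than the linear ones needed for Mitoma's criterion.
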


\begin{proof}
  1. We first consider $p \geqslant 2$ and $\varphi \in \CD_{2 p}
  (\LL^m)$ and derive an estimate for $\E [| \varphi (u^m_t)
  - \varphi (u^m_s) |^p]$. For that purpose we split $\varphi (u^m_t) -
  \varphi (u^m_s) = \int_s^t \LL^m \varphi (u^m_r) \dd r + M^{m,
  \varphi}_t - M^{m, \varphi}_s$, and observe that by
  Lemma~\ref{lem:switch-measure} and Remark~\ref{rmk:higher-moments}
  \[ \E \left[ \left| \int_s^t \LL^m \varphi (u^m_r) \dd
     r \right|^p \right] \lesssim \E_{\mu} \left[ \left| \int_s^t
     \LL^m \varphi (u^m_r) \dd r \right|^{2 p} \right]^{1 / 2}
     \leqslant | t - s |^p \| | \LL^m \varphi |^p \| \leqslant | t -
     s |^p \| c_{2 p}^{\NN} \LL^m \varphi \|^p . \]
  The martingale term can be bounded with the help of the
  Burkholder-Davis-Gundy inequality and~\eqref{eq:martingale-qv-bound}:
  \begin{align*}
    \E [| M^{m, \varphi}_t - M^{m, \varphi}_s |^p] & \lesssim
    \E \left[ \left( \int_s^t \CE \varphi (u^m_s) \dd s
    \right)^{p / 2} \right] \lesssim \E_{\mu} \left[ \left( \int_s^t
    \CE \varphi (u^m_s) \dd s \right)^p \right]^{1 / 2} \\
    & \lesssim |  t - s |^{p / 2} \| (\CE \varphi)^{p / 2} \| \leqslant | t - s |^{p / 2} \| c_{2 p}^{\NN} (\CE
    \varphi)^{1 / 2} \|^p \\
    & \lesssim | t - s |^{p / 2} \| c_{2 p}^{\NN} (-\LL_0)^{1 / 2} \varphi \|^p .
  \end{align*}
\noindent  2. Let now $\varphi \in c_{2p}^{-\NN} (-\LL_0)^{- 1} \Gamma L^2 \cap c_{2p}^{-\NN} (1 + \NN)^{- 9 / 2} (-\LL_0)^{- 1 / 2} \Gamma L^2$. We apply Step 1 and~\eqref{eq:quantitative-density} to find for all $M \geqslant 1$ a $\varphi^M \in
  \CD_{2 p} (\LL^m)$ with
  \begin{align*}
    \E [| \varphi (u^m_t) - \varphi (u^m_s) |^p] & \lesssim
    \E [| \varphi (u^m_t) - \varphi^M (u^m_t) |^p] +\E [|
    \varphi (u^m_s) - \varphi^M (u^m_s) |^p] \\
    & \quad +\E [| \varphi^M (u^m_t)    - \varphi^M (u^m_s) |^p]\\
    & \lesssim \| | \varphi - \varphi^M |^p \| + | t - s |^{p / 2} \| c_{2
    p}^{\NN} (-\LL_0)^{1 / 2} \varphi^M \|^p + | t - s |^p \|
    c_{2 p}^{\NN} \LL^m \varphi^M \|^p\\
    & \lesssim \| c_{2 p}^{\NN} (\varphi - \varphi^M) \|^p + | t - s
    |^{p / 2} \| c_{2 p}^{\NN} (-\LL_0)^{1 / 2} \varphi^M
    \|^p + | t - s |^p \| c_{2 p}^{\NN} \LL^m \varphi^M
    \|^p\\
    & \lesssim M^{- p / 2} \| c_{2 p}^{\NN} \varphi \|^p + | t - s
    |^{p / 2} \| c_{2 p}^{\NN} (-\LL_0)^{1 / 2} \varphi
    \|^p\\
    & \quad + | t - s |^p M^{p / 2} (\| c_{2 p}^{\NN}
    (-\LL_0) \varphi \|^p + \| c_{2 p}^{\NN} (1 + \NN)^{9 / 2} (-\LL_0)^{1 / 2} \varphi \|^p) .
  \end{align*}
  For $| t - s | \leqslant 1$ we choose $M = | t - s |^{- 1}$ and see that the
  right hand side is of order $| t - s |^{p / 2}$. The law of the initial
  condition $\varphi (u^m_0)$ does not depend on $m$, and thus it follows from
  Kolmogorov's continuity criterion that the sequence of real valued processes
  $(\varphi (u^m))_m$ is tight in $C (\R_+, \R)$ whenever $p
  > 2$ and $\varphi \in w (\NN)^{- 1} (-\LL_0)^{- 1} \Gamma L^2 \cap w (\NN)^{- 1} (1 + \NN)^{- 9 / 2} (-\LL_0)^{- 1 / 2} \Gamma L^2$.
  This space contains in particular all functions of the form $\varphi (u) = u (f)$ with $f \in
  C^{\infty} (\T)$, where $u (f)$ denotes the application of the
  distribution $u \in \CS'$ to the test function $f$. Therefore, we
  can apply Mitoma's criterion~\cite{Mitoma1983} to deduce that the sequence $(u^m)$ is
  tight in $C (\R_+, \CS')$.
  
  3. It remains to show that any weak limit $u$ of $(u^m)$ solves the
  martingale problem for $\LL$ with initial distribution $\eta \dd
  \mu$. As $u^m_0 \sim \eta \dd \mu$, also any weak limit has initial
  distribution $\eta \dd \mu$. To show that $u$ solves the martingale
  problem, first observe that for any $\varphi \in \Gamma L^2$ we have
  \[ \E [| \varphi (u_t) |] \leqslant \liminf_{m \rightarrow \infty}
     \E [| \varphi (u^m_t) |] \leqslant \liminf_{m \rightarrow
     \infty} \| \eta \| \| \varphi \|, \]
  and therefore we have for any bounded cylinder function $\varphi^M$
  \begin{align*}
     \limsup_{m \rightarrow \infty} | \E [\varphi (u_t)]&
    -\E [\varphi (u^m_t)] | \leqslant  
    \E [| (\varphi - \varphi^M) (u_t) |] + 
    \\ & %\hspace{10pt} 
    +\limsup_{m \rightarrow \infty} \left\{  \left| \E
    [\varphi^M (u_t)] -\E [\varphi^M (u_t^m)] \right| +\E [|
    (\varphi - \varphi^M) (u_t^m) |] \right\} \\ &  \lesssim \| \varphi - \varphi^M
    \|,
  \end{align*}
  which shows that the left hand side equals zero because bounded cylinder
  functions are dense in $\Gamma L^2$. The same argument also shows that
  $\limsup_{m \rightarrow \infty} \left| \E \left[ \int_s^t \varphi
  (u_r) \dd r \right] -\E \left[ \int_s^t \varphi (u_r^m) \dd r
  \right] \right| = 0$ and then that for $\varphi \in \CD
  (\LL)$ and $G$ bounded and continuous on $C ([0, s], \CS')$
  \begin{align*}
    & \E \left[ \left( \varphi (u_t) - \varphi (u_s) - \int_s^t
    \LL \varphi (u_r) \dd r \right) G ((u_r)_{r \in [0, s]})
    \right]\\
    & \hspace{20pt} = \lim_{m \rightarrow \infty} \E \left[ \left(
    \varphi (u_t^m) - \varphi (u_s^m) - \int_s^t \LL \varphi (u_r^m)
    \dd r \right) G ((u_r^m)_{r \in [0, s]}) \right] .
  \end{align*}
  This is not quite sufficient, because $u^m$ solves the martingale problem
  for $\LL^m$ and not for $\LL$. But since $\varphi \in
  \CD (\LL)$ there exists $\varphi^{\sharp}$ with $\varphi
  =\KK \varphi^{\sharp}$, so let us define $\varphi^m =\KK^m
  \varphi^{\sharp}$. It follows from the dominated convergence theorem and the
  proof of Lemma~\ref{lem:controlled-fct} that $\| \varphi^m - \varphi \|
  \rightarrow 0$ as $m \rightarrow \infty$. Moreover, $\LL^m
  \varphi^m =\LL_0 \varphi^{\sharp} +\GG^{m, \prec}
  \KK^m \varphi^{\sharp}$, and therefore another application of the dominated
  convergence theorem in the proof of Proposition~\ref{prop:domain} shows that
  $\| \LL^m \varphi^m -\LL \varphi \| \rightarrow 0$. Hence
  \begin{align*}
    & \lim_{m \rightarrow \infty} \E \left[ \left( \varphi (u_t^m) -
    \varphi (u_s^m) - \int_s^t \LL \varphi (u_r^m) \dd r \right) G
    ((u_r^m)_{r \in [0, s]}) \right]\\
    & \hspace{20pt} = \lim_{m \rightarrow \infty} \E \left[ \left(
    \varphi^m (u_t^m) - \varphi^m (u_s^m) - \int_s^t \LL^m \varphi^m
    (u_r^m) \dd r \right) G ((u_r^m)_{r \in [0, s]}) \right] = 0,
  \end{align*}
  which concludes the proof.
\end{proof}

\begin{remark}
  For simplicity we restricted our attention to $\eta \in L^2 (\mu)$. But it
  is clear that the same arguments show the existence of solutions to the martingale problem for initial conditions $\eta \dd \mu$ with $\eta \in L^q (\mu)$ for $q > 1$.
  The key requirement is that we can control expectations of $u^m$ in terms of
  higher moments under the stationary measure $\P_{\mu}$, which also
  works for $\eta \in L^q (\mu)$. The only difference is that for $q < 2$ we
  would have to adapt the definition of incompressibility and restrict our domain in the martingale problem from
  $\CD (\LL)$ to $\CD_{q'} (\LL)$, where
  $q'$ is the conjugate exponent of $q$. On the other hand the uniqueness proof below really needs $\eta \in L^2$ because we can only control the solution to the backward equation in spaces with polynomial weights, but not with exponential weights.
\end{remark}

\subsection{Uniqueness of solutions}

Let $\eta \in \Gamma L^2$ be a probability density (with respect to $\mu$). Let the process $(u_t)_{t \geqslant 0} \in C (\R_+,
\CS')$ be incompressible and solve the martingale problem for $\LL$ with initial
distribution $u_0 \sim \eta \dd \mu$. Here we use the duality of martingale problem and backward equation to show that the law of $u$ is unique and that it is a Markov process with invariant measure $\mu$.

In Lemma~\ref{lem:time-dependent-mp} in the appendix we show that for $\varphi \in C (\R_+, \CD (\LL)) \cap C^1 (\R_+, \Gamma L^2)$ the process $\varphi (t, u_t) - \varphi (0, u_0) - \int_0^t (\partial_s +\LL) \varphi (s, u_s) \dd s$, for $t \geqslant 0$, is a martingale. This will be an important tool in the following theorem:

\begin{theorem}
  \label{thm:mp-uniq}Let $\eta \in \Gamma L^2$ with $\eta \geqslant 0$ and
  $\int \eta \dd \mu = 1$. Let $u$ be an incompressible solution to the martingale
  problem for $\LL$ with initial distribution $u_0 \sim \eta \dd
  \mu$. Then $u$ is a Markov process and its law is
  unique. Moreover, $\mu$ is a stationary measure for $u$.
\end{theorem}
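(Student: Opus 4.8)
The plan is to exploit the duality between the martingale problem and the backward equation, using the solutions constructed in Theorem~\ref{thm:bw-eq}. The key point is that $\CK$ (and a fortiori $\CD(\LL)$) is dense in $\Gamma L^2 = L^2(\mu)$, so it suffices to identify $\E[\varphi(u_t)]$ for $\varphi$ ranging over a dense set, and more generally to identify the finite-dimensional distributions via the Markov property. First I would show the one-dimensional marginals are determined: fix $t>0$ and $\psi \in \CK$, and let $\varphi(s,\cdot) \assign T_{t-s}\psi$ for $s\in[0,t]$, where $T$ is the semigroup on $L^2(\mu)$ generated by $\LL$ (whose existence on $\CK$-type data is guaranteed by Theorem~\ref{thm:bw-eq}, since $v(s)\assign T_{t-s}\psi$ solves $\partial_s v = -\LL v$, i.e. it is the backward equation run in reversed time). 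Then $\varphi \in C([0,t],\CD(\LL))\cap C^1([0,t],\Gamma L^2)$ and $(\partial_s + \LL)\varphi = 0$, so by Lemma~\ref{lem:time-dependent-mp} the process $\varphi(s,u_s) - \varphi(0,u_0) - \int_0^s (\partial_r+\LL)\varphi(r,u_r)\,\dd r = \varphi(s,u_s) - \varphi(0,u_0)$ is a martingale. Taking expectations at $s=0$ and $s=t$ gives
\[
	\E[\psi(u_t)] = \E[\varphi(t,u_t)] = \E[\varphi(0,u_0)] = \E[(T_t \psi)(u_0)] = \int (T_t\psi)(u)\, \eta(u)\,\mu(\dd u),
\]
which depends only on $\eta$ (and $\psi$, $t$), not on the particular solution $u$. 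Here I must be a little careful that all the quantities appearing are well defined and the martingale argument applies: this is exactly where incompressibility enters, since $\psi$ and $T_t\psi$ and $\LL\varphi(s,\cdot)$ are merely $\Gamma L^2$-functions and not cylinder functions, but the bound $\sup_{s\le T}\E[|\varphi(u_s)|] \lesssim \|\varphi\|$ from Definition~\ref{def:incompressible} makes sense of $\varphi(s,u_s)$ and of the time integral (which we control using that $s\mapsto \LL\varphi(s,\cdot)$ is continuous, hence bounded, in $\Gamma L^2$). Since $\CK$ is dense in $L^2(\mu)$ and $T_t$ is a contraction there, this identity extends to all $\psi\in\Gamma L^2$, and then (using incompressibility again) to bounded measurable $\psi$, so $\mathrm{law}(u_t)$ is determined.

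Next I would upgrade this to the Markov property and to uniqueness of the full law. For $0\le s \le t$ and $\psi\in\CK$, the same time-dependent martingale $\varphi(r,u_r) - \varphi(s,u_s)$ on $[s,t]$ with $\varphi(r,\cdot) = T_{t-r}\psi$, conditioned on $\CF_s$ (the filtration generated by $u$), yields
\[
	\E[\psi(u_t)\mid \CF_s] = (T_{t-s}\psi)(u_s) \qquad \text{a.s.}
\]
Since the right-hand side is $\sigma(u_s)$-measurable, this already gives the Markov property (first for $\psi\in\CK$, then by density and incompressibility for bounded measurable $\psi$, and then for finitely many times by iterating), and it identifies the transition semigroup with $T$, which depends only on $\LL$. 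Together with the initial law $\eta\,\dd\mu$ this determines all finite-dimensional distributions of $u$, hence its law on $C(\R_+,\CS')$. Stationarity of $\mu$ is then immediate: taking $\eta\equiv 1$ we get $\int (T_t\psi)\,\dd\mu = \E_\mu[\psi(u_t)]$, and since $(T_t)$ is the semigroup generated by $\LL$ and $\mu$ is invariant for each Galerkin approximation (hence, passing to the limit in $\int T_t^m\varphi\,\dd\mu = \int \varphi\,\dd\mu$ using the convergence $T_t^m\to T_t$ established in the proof of Theorem~\ref{thm:bw-eq}, or directly from $\langle \LL\varphi,1\rangle = 0$ which follows from $\LL_0 1 = 0$ and the antisymmetry of $\GG$), we get $\int T_t\psi\,\dd\mu = \int \psi\,\dd\mu$, so $\mathrm{law}(u_t) = \mu$ for all $t$ when $u_0\sim\mu$.

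The main obstacle, I expect, is the careful justification that the time-dependent martingale problem (Lemma~\ref{lem:time-dependent-mp}) can be applied to $\varphi(r,\cdot) = T_{t-r}\psi$ for $\psi$ only in $\CK$ rather than in some nicer explicit class: one needs $r\mapsto T_{t-r}\psi$ to lie in $C([0,t],\CD(\LL))\cap C^1([0,t],\Gamma L^2)$, which requires that the solution of the backward equation produced in Theorem~\ref{thm:bw-eq} genuinely has these regularity properties and depends on the data in a way compatible with the semigroup structure — in particular that $\CK$ is $T_t$-invariant or at least that $T_t\psi$ can be approximated within $\CD(\LL)$ in the graph norm of $\LL$. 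If $\CK$ is not exactly preserved by the semigroup, the remedy is to first prove the duality identity $\E[\psi(u_t)] = \int T_t\psi\,\eta\,\dd\mu$ for $\psi$ in the dense subclass of $\CK$ for which Theorem~\ref{thm:bw-eq} directly yields a solution in $C(\R_+,\CD(\LL))\cap C^1(\R_+,\Gamma L^2)$, and then extend by density and the contraction property of $T_t$ on $L^2(\mu)$; incompressibility of $u$ is what makes the passage to the limit on the left-hand side legitimate. A secondary technical point is checking $\int_0^t|\LL\varphi(s,u_s)|\,\dd s < \infty$ a.s. and integrability of the martingale, both of which follow from incompressibility combined with the continuity of $s\mapsto\LL\varphi(s,\cdot)$ in $\Gamma L^2$.
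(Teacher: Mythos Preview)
Your proposal is correct and follows essentially the same strategy as the paper: apply Lemma~\ref{lem:time-dependent-mp} to the time-reversed backward solution $s\mapsto\varphi(t-s)$ from Theorem~\ref{thm:bw-eq} with $\varphi_0\in\CK$, use incompressibility and density of $\CK$ to identify the one-dimensional marginals, then iterate for finite-dimensional distributions and the Markov property. The concern you raise about needing $\CK$ to be $T_t$-invariant is unnecessary: Theorem~\ref{thm:bw-eq} already asserts that for $\varphi_0\in\CK$ the solution lies in $C(\R_+,\CD(\LL))\cap C^1(\R_+,\Gamma L^2)$, which is exactly the hypothesis of Lemma~\ref{lem:time-dependent-mp}; no invariance or graph-norm approximation is required. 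For stationarity the paper argues slightly differently---it invokes uniqueness to identify the solution with the limit of the stationary Galerkin approximations constructed in Theorem~\ref{thm:mp-ex}---but your route via $\langle\LL\varphi,1\rangle=0$ is equally valid.
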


\begin{proof}
  Let $\varphi_0 \in \CK$ and let $\varphi \in C
  (\R_+, \CD (\LL)) \cap C^1 (\R_+, \Gamma
  L^2)$ be the solution to $\partial_t \varphi =\LL \varphi$ with
  initial condition $\varphi (0) =\varphi_0$ that is given
  by Theorem~\ref{thm:bw-eq}. Then we get for $t \geqslant 0$ from Lemma~\ref{lem:time-dependent-mp} that
  \begin{align*}
    \E [\varphi_0 (u_t)] & =\E [\varphi
    (t - t, u_t)] =\E \left[ \varphi (t - 0, u_0) + \int_0^t (- \partial_t
    \varphi (t - s, u_s) +\LL \varphi (t - s, u_s)) \dd s
    \right]\\
    & =\E [\varphi (t, u (0))] = \langle \varphi (t), \eta \rangle
  \end{align*}
  is uniquely determined. Here we used that $\| - \partial_t \varphi (t - s)
  +\LL \varphi (t - s) \| = 0$ implies by assumption also
  $\E [| - \partial_t \varphi (t - s, u_s) +\LL \varphi (t -
  s, u_s) |] = 0$. It is easy to see that $\CK$ is dense in $\CD (\LL)$,
  and since $\CD (\LL)$ is dense in $\Gamma L^2$ and
  $\E [| \psi (u_t) - \tilde{\psi} (u_t) |] \lesssim \| \psi -
  \tilde{\psi} \|$, the law of $u_t$ is uniquely determined.
  
  Next let $\psi_1$ be bounded and measurable and let $\psi_2 \in \CK$. Let $0 \leqslant t_1 <
  t_2$ and let $\partial_t \varphi_2 =\LL \varphi_2$ with initial
  condition $\varphi_2 (0) =\psi_2$. Then
  \begin{align*}
    \E [\psi_1 (u_{t_1}) \psi_2 (u_{t_2})] & =\E [\psi_1
    (u_{t_1}) \varphi_2 (t_2 - t_2, u_{t_2})]\\
    & =\E \left[ \psi_1 (u_{t_1}) \left\{ \varphi_2 (t_2 - t_1,
    u_{t_1}) + \int_{t_1}^{t_2} (- \partial_t +\LL) \varphi_2 (t_2 -
    s, u_s) \dd s \right\} \right]\\
    & =\E [\psi_1 (u_{t_1}) \varphi_2 (t_2 - t_1, u_{t_1})] .
  \end{align*}
  Since we already saw that the law of $u (t_1)$ is uniquely determined, also
  the law of $(u_{t_1}, u_{t_2})$ is unique (by a monotone class argument).
  Iterating this, we get the uniqueness of $\tmop{law} (u_{t_1}, \ldots, u_{t_n})$
  for all $0 \leqslant t_1 < \ldots < t_n$, and therefore the uniqueness of
  $\tmop{law} (u_t : t \geqslant 0)$.
  
  To see the Markov property let $0 \leqslant t < s$, let $X$ be an
  $\CF_t = \sigma (u_r : r \leqslant t)$ measurable bounded random
  variable, and let $\varphi_0 \in \CK$. Then for the solution
  $\varphi$ to the backward equation with initial condition $\varphi (0)
  =\varphi_0$:
  \begin{align*}
    \E [X \varphi_0 (u_s)] & =\E [X
    \varphi (s - s, u_s)] =\E \left[ X \left( \varphi (s - t, u_t) +
    \int_t^s (- \partial_t +\LL) \varphi (s - r, u_r) \dd r
    \right) \right]\\
    &  =\E [X \varphi (s - t, u_t)],
  \end{align*}
  which shows that $\E [\varphi_0 (u_s)
  |\CF_t] = \varphi (s - t, u_t) =\E [\varphi_0 (u_s) |u_t]$. Now the Markov property follows by another
  density argument.
  
  To see that $u$ is stationary with respect to $\mu$ it suffices to consider the specific approximation that we used in the existence proof, i.e. the Galerkin approximation with initial distribution $\mathrm{law}(u^m_0) = \mu$. This is a stationary process and it converges to the solution of the martingale problem, which therefore is also stationary and has initial distribution $\mu$.
\end{proof}

\begin{remark}
The strong Markov property seems difficult to obtain with our tools: If $\tau$ is a stopping
time, then there is no reason why the law of $u_{\tau}$ should be absolutely
continuous with respect to $\mu$, regardless of the initial distribution of
$u$. Since such absolute continuity is crucial for our method, it is not clear
how to deal with $(u_{\tau + t})_{t \geqslant 0}$ (although formally of course
the same arguments as above apply).
\end{remark}

\begin{definition}
We let  $(T_t)_t$ be the semigroup on $\Gamma L^2$ given by, for $\varphi \in \Gamma L^2$,
\[
	\langle T_t \varphi, \eta \rangle = \E_{\eta \dd \mu} [\varphi(u_t)],\qquad \eta \in \Gamma L^2, \eta \geqslant 0, \int \eta \dd \mu = 1,
\]
where $u$ solves the martingale problem for $\LL$ with initial condition $\mathrm{law}(u_0) = \eta \dd \mu$; for more general $\eta \in \Gamma L^2$ we define $\langle T_t \varphi, \eta \rangle$ by linearity, so by the Riesz representation theorem we have indeed $T_t \varphi \in \Gamma L^2$. 
\end{definition}

\begin{proposition}\label{prop:hille-yosida}
The semigroup $(T_t)_t$ is a strongly continuous contraction semigroup on $\Gamma L^2$ and
\[
T_t \varphi = \varphi + \int_0^t T_s \LL \varphi ds,\qquad t\geqslant 0
\]
for all $\varphi \in \CD(\LL)$.
The Hille-Yosida generator $\hat\LL$ of $(T_t)_t$ is an extension of $\LL$.
\end{proposition}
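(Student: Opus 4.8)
The plan is to verify in turn the four assertions hidden in the statement: each $T_t$ is a linear contraction on $\Gamma L^2$, the family $(T_t)$ is strongly continuous, it has the semigroup property, and its Hille--Yosida generator $\hat\LL$ extends $\LL$; the Duhamel identity $T_t\varphi = \varphi + \int_0^t T_s\LL\varphi\,\dd s$ for $\varphi\in\CD(\LL)$ will drop out along the way. For the contraction property I would exploit the construction of $u$ as a Galerkin limit: by Theorem~\ref{thm:mp-ex} together with the uniqueness in Theorem~\ref{thm:mp-uniq}, if $\mathrm{law}(u_0) = \eta\,\dd\mu$ then $u$ is the weak limit in $C(\R_+,\CS')$ of the Galerkin approximations $u^m$ started from $\eta\,\dd\mu$. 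Hence for every bounded continuous cylinder function $\varphi$ and every probability density $\eta\in\Gamma L^2$ we have $\langle T_t\varphi,\eta\rangle = \E_{\eta\,\dd\mu}[\varphi(u_t)] = \lim_m \E_{\eta\,\dd\mu}[\varphi(u^m_t)] = \lim_m \langle T^m_t\varphi,\eta\rangle$, where $T^m$ is the Galerkin semigroup. Since $\|T^m_t\varphi\|\leqslant\|\varphi\|$ by Lemma~\ref{lem:semigroup} and since the probability densities span a dense subspace of $\Gamma L^2$, this forces $T^m_t\varphi\rightharpoonup T_t\varphi$ weakly in $\Gamma L^2$, so $\|T_t\varphi\|\leqslant\liminf_m\|T^m_t\varphi\|\leqslant\|\varphi\|$; linearity of $\varphi\mapsto T_t\varphi$ is immediate from the definition, and since bounded continuous cylinder functions are dense in $\Gamma L^2$ we conclude that $T_t$ is a linear contraction.

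Next, strong continuity. Fix $\varphi\in\CD(\LL)$. Since $u$ solves the martingale problem, $\varphi(u_\cdot)-\varphi(u_0)-\int_0^\cdot\LL\varphi(u_s)\,\dd s$ is a martingale vanishing at $0$ under $\P_{\eta\,\dd\mu}$, so taking expectations and applying Fubini (legitimate because $\E_{\eta\,\dd\mu}[\int_0^t|\LL\varphi(u_s)|\,\dd s] \leqslant t\,\|\eta\|\,\|\LL\varphi\| < \infty$ by incompressibility and $\LL\varphi\in\Gamma L^2$) gives $\langle T_t\varphi-\varphi,\eta\rangle = \int_0^t\langle T_s\LL\varphi,\eta\rangle\,\dd s$ for all probability densities $\eta$, hence $|\langle T_t\varphi-\varphi,\eta\rangle|\leqslant t\,\|\LL\varphi\|\,\|\eta\|$ and therefore $\|T_t\varphi-\varphi\|\leqslant t\,\|\LL\varphi\|\to 0$. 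Since $\CD(\LL)$ is dense in $\Gamma L^2$ (Lemma~\ref{lem:dom-def}) and the $T_t$ are uniformly bounded, the standard $\varepsilon/3$ argument yields $\lim_{t\downarrow 0}\|T_t\varphi-\varphi\| = 0$ for every $\varphi\in\Gamma L^2$; combined with the semigroup property below and $\|T_t\|\leqslant 1$ this upgrades to strong continuity on all of $\R_+$. Once strong continuity is in place, $s\mapsto T_s\LL\varphi$ is continuous, the Bochner integral $\int_0^t T_s\LL\varphi\,\dd s$ exists in $\Gamma L^2$, and the displayed identity (now paired against an arbitrary $\eta$) becomes $T_t\varphi = \varphi + \int_0^t T_s\LL\varphi\,\dd s$.

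For the semigroup property I would invoke the Markov property of $u$ established in Theorem~\ref{thm:mp-uniq}. Its proof shows that for $\varphi_0\in\CK$ the solution of the backward equation with initial datum $\varphi_0$ is exactly $r\mapsto T_r\varphi_0$ (compare the identity $\E[\varphi_0(u_r)] = \langle\varphi(r),\eta\rangle$ there with the definition of $T_r$), and that $\E[\varphi_0(u_{t+s})\mid\CF_t] = (T_s\varphi_0)(u_t)$ almost surely; the right-hand side is unambiguous because $\mathrm{law}(u_t)\ll\mu$, and by the density of $\CK$ in $\Gamma L^2$ together with the incompressibility bound $\E_{\eta\,\dd\mu}[|\psi(u_t)|]\lesssim\|\psi\|$ this identity persists for all $\varphi_0\in\Gamma L^2$. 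Taking $\E_{\eta\,\dd\mu}$ of both sides and using the definition of $T_t$ gives $\langle T_{t+s}\varphi_0,\eta\rangle = \E_{\eta\,\dd\mu}[(T_s\varphi_0)(u_t)] = \langle T_t(T_s\varphi_0),\eta\rangle$ for every probability density $\eta$, hence $T_{t+s} = T_t T_s$.

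Finally, the generator: differentiating the Duhamel identity at $t = 0$, strong continuity makes the integrand $s\mapsto T_s\LL\varphi$ continuous, so $t^{-1}(T_t\varphi-\varphi) = t^{-1}\int_0^t T_s\LL\varphi\,\dd s \to \LL\varphi$ in $\Gamma L^2$ as $t\downarrow 0$; thus $\varphi\in\mathrm{dom}(\hat\LL)$ with $\hat\LL\varphi = \LL\varphi$ for every $\varphi\in\CD(\LL)$, i.e.\ $\hat\LL\supseteq\LL$. I expect the semigroup property to be the only genuinely delicate point: passing from the weak, $L^2(\mu)$-almost-everywhere definition of $T_t$ to the pointwise conditional-expectation manipulation requires careful bookkeeping of the absolute continuity $\mathrm{law}(u_t)\ll\mu$ and of the two density arguments (in the initial datum $\varphi_0$, and in the cylinder-function approximation used in the Galerkin-limit step). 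Everything else—the contraction estimate, the martingale/Fubini computation, and the differentiation at $0$—is soft functional analysis once the incompressibility bounds of Section~\ref{sec:mp} and the contractivity of $T^m$ from Lemma~\ref{lem:semigroup} are available.
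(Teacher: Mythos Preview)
Your argument is correct and follows essentially the same logical skeleton as the paper's proof: derive the Duhamel identity from the martingale problem, read off strong continuity on $\CD(\LL)$ and extend by density, then differentiate at $t=0$ to get $\hat\LL\supseteq\LL$. The one substantive difference is in the contraction step: you pass through the Galerkin approximation, using $\|T^m_t\varphi\|\leqslant\|\varphi\|$ together with weak convergence $T^m_t\varphi\rightharpoonup T_t\varphi$ and lower semicontinuity of the norm, whereas the paper invokes stationarity of $\mu$ directly (established in Theorem~\ref{thm:mp-uniq}): under $\P_\mu$ one has $T_t\varphi(u_0)=\E[\varphi(u_t)\mid u_0]$, so Jensen plus $\mathrm{law}(u_t)=\mu$ gives $\|T_t\varphi\|^2\leqslant\E_\mu[|\varphi(u_t)|^2]=\|\varphi\|^2$ in one line. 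Your route works but is more elaborate; the paper's is cleaner once stationarity is on the table. You also spell out the semigroup property via the Markov property, which the paper leaves implicit---this is a reasonable addition, and your handling of it is fine.
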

\begin{proof}
Since $\mu$ is stationary for $u$ we have $\|T_t \varphi \| \leqslant \|\varphi\|$ for all $t \geqslant 0$, i.e. $(T_t)$ is a contraction semigroup. From the martingale problem it follows also that for $\varphi \in \CD(\LL)$
\[
T_t \varphi = \varphi + \int_0^t T_s \LL \varphi ds,\qquad t\geqslant 0
\]
and therefore we get the strong continuity in $t$, which by approximation extends to $t \mapsto T_t \psi$ for all $\psi \in \Gamma L^2$. We conclude that $\partial_t T_t \varphi |_{t=0} = \LL \varphi$ and thus $\hat\LL$ is an extension of $\LL$. 
\end{proof}

\subsection{Exponential ergodicity}\label{sec:ergodic}

The Burgers generator formally satisfies a spectral gap estimate and should thus be exponentially $L^2$ ergodic. Indeed, its symmetric part is $\LL_0$ for which the spectral gap is known, and its antisymmetric part $\GG$ should not contribute to the spectral gap estimate, see e.g.~\cite[Definition~2.1]{Guionnet2003}. Having identified a domain for $\LL$, we can make this formal argument rigorous. We remark that the ergodicity of Burgers equation was already shown in~\cite{Hairer2018Strong}, even in a stronger sense. The only new result here is the exponential speed of convergence (and our proof is very simple).

Consider  $\varphi \in \CK$ and let $(\varphi(t))$ be the unique solution to the backward equation that we constructed in Theorem~\ref{thm:bw-eq} starting from $\varphi(0) = \varphi$. From Proposition~\ref{prop:hille-yosida} we know that $T_t \varphi = \varphi(t)$ and from Lemma~\ref{lem:dissipative} we obtain
\begin{align*}
	\partial_t \frac12 \| \varphi(t) \|^2 & = - \| (-\LL_0)^{1/2} \varphi(t) \|^2.
\end{align*}
Assume that $\int \varphi \dd \mu = \varphi_0 = 0$ for the zero-th chaos component, which by construction holds whenever $(\KK^{-1} \varphi)_0  = 0$. Using the stationarity of $(u_t)$ with respect to $\mu$ we see that then also $(\varphi(t))_0=0$. Recall that $\CF(\varphi(t))_n(k_{1:n}) = 0$ whenever $k_i = 0$ for some $i$, which leads to
\[
	\| (-\LL_0)^{1/2} \varphi(t) \|^2 \geqslant |2\pi|^2 \| \varphi(t) \|^2,
\]
and thus $\partial_t  \| \varphi(t) \|^2 \leqslant - 8 \pi^2 \|\varphi(t) \|^2$, and then from Gronwall's inequality
\begin{equation}\label{eq:ergodic}
	\| T_t \varphi \| \leqslant e^{-4 \pi^2 t} \|\varphi \|.
\end{equation}
This holds for all $\varphi \in \CK$ with $\int \varphi \dd \mu = 0$, but since the left and right hand side can be controlled in terms of $\|\varphi\|$ it extends to all $\varphi \in \Gamma L^2$ with $\int \varphi \dd \mu = 0$. There are two main consequences:
\begin{enumerate}
\item The measure $\mu$ is ergodic: Recall that the set of invariant measures of a Markov process is convex, and the extremal points are the mutually singular ergodic measures. Moreover, $\mu$ is ergodic if and only if for all $A \subset \CS'$ with $T_t \1_A \overset{\mu-\text{a.s.}}{=} \1_A$ for $t \geqslant 0$ we have $\mu(A) \in \{0,1\}$, see \cite[Theorem~2.12]{Eberle2017}. But from~\eqref{eq:ergodic} we know that $T_t \1_A \to \mu(A)$ as $t \to \infty$, and the only possibility to have $\1_A \overset{\mu-\text{a.s.}}{=} \mu(A)$ is if $\mu(A) \in \{0,1\}$. Therefore, $\mu$ is ergodic and in particular there is no invariant measure that is absolutely continuous with respect to $\mu$, other than $\mu$ itself.

\item We can solve the Poisson equation $\hat\LL \varphi = \psi$ for all $\psi \in \Gamma L^2$ with $\int \psi \dd \mu = 0$ by setting $\varphi = \int_0^\infty T_t \psi \dd t$, which is well defined by~\eqref{eq:ergodic}. Here $\hat \LL$ is the Hille-Yosida generator and we do not necessarily have $\varphi \in \CD(\LL)$.
\end{enumerate}

\subsection{Martingale problem with cylinder functions}

The martingale approach to Burgers equation is particularly useful for proving
that the equation arises in the scaling limit of particle systems. The disadvantage of the martingale problem
based on controlled functions is that, given a microscopic system for which we
want to prove that it scales to Burgers equation, it may be difficult to find
similar controlled functions before passing to the limit. Instead it is often
more natural to derive a characterization of the scaling limit based on
cylinder test functions. Here we show that in some cases this characterization
already implies that the limit solves our martingale problem for the
controlled domain of the generator, and therefore it is unique in law. The
biggest restriction is that we have to assume that the process allows for the
It{\^o} trick:

\begin{definition}
  \label{def:cylinder-mp}A process $(u_t)_{t \geqslant 0}$ with trajectories
  in $C (\R_+, \CS')$ \emph{solves the cylinder function
  martingale problem for $\LL$ with initial distribution $\nu$} if
  $u_0 \sim \nu$, and if the following conditions are satisfied:
  \begin{enumerate}[i.]
    \item $\E [| \varphi (u_t) |] \lesssim \| \varphi \|$ locally
    uniformly in $t$, namely $u$ is incompressible;
    
    \item there exists an approximation of the identity $(\rho^{\varepsilon})$
    such that for all $f \in C^{\infty} (\T)$ the process
    \[ M_t^f = u_t (f) - u_0 (f) - \lim_{\varepsilon \to 0} \int_0^t \LL^{\varepsilon} u_s (f)
       \dd s \]
    is a continuous martingale in the filtration generated by $(u_t)$, where
    \[ \LL^{\varepsilon} u (f) =\LL_0 u (f) + \langle
       \partial_x (u \ast \rho^{\varepsilon})^2, f \rangle_{L^2 (\T)}
       ; \]
    moreover $M^f$ has quadratic variation $\langle M^f \rangle_t = 2 t \| \partial_x f \|_{L^2}^2$.
    \item the It{\^o} trick works: for all cylinder functions $\varphi$ and
    all $p \geqslant 1$ we have
    \[ \E \left[ \sup_{t \leqslant T} \left| \int_0^t \varphi (u_s)
       \dd s \right|^p \right] \lesssim T^{p / 2} \| c_{2p}^{\NN}
       (-\LL_0)^{- 1 / 2} \varphi \|^p . \]
  \end{enumerate}
\end{definition}

\begin{remark}
	In \cite{Goncalves2014, Gubinelli2013} so called \emph{stationary energy
solutions} to the Burgers equation are defined. The definition in~\cite{Gubinelli2013} makes the following alternative assumptions:
\begin{itemize}
	\item[i'.] For all times $t \geqslant 0$ the law of $u_t$ is $\mu$;
	\item[ii'.] the conditions in ii. above hold, and additionally the process $\lim_{\varepsilon \to 0} \int_0^t \LL^{\varepsilon} u_s (f) \dd s$ has vanishing quadratic variation;
	\item[iii'.] for $T \geqslant 0$ let $\hat u_t = u_{T-t}$; then $\hat{M}_t^f = \hat u_t (f) - \hat u_0 (f) + \lim_{\varepsilon \to 0} \int_0^t \LL^{\varepsilon} \hat u_s (f) \dd s$ is a continuous martingale in the filtration generated by $(\hat u_t)$, with quadratic variation $\langle \hat M^f \rangle_t = 2 t \| \partial_x f \|_{L^2}^2$.
\end{itemize}
Clearly i'. and ii'. are stronger than i. and ii., and it is shown~\cite[Proposition~3.2]{Gubinelli2018Energy} that any process satisfying i'., ii'., iii'. also satisfies the first inequality in
\begin{equation}\label{eq:pre-KV}
	\E \left[ \sup_{t \leqslant T} \left| \int_0^t \LL_0 \varphi (u_s) \dd s \right|^p \right] \lesssim T^{p / 2} \| (\CE \varphi)^{p/4} \|^2  \lesssim T^{p / 2} \|  c_{p}^\NN (\CE \varphi)^{1/2} \|^p \simeq T^{p / 2} \|  c_{p}^\NN (-\LL_0)^{1/2} \varphi\|^p,
\end{equation}
where the second inequality uses Remark~\ref{rmk:higher-moments} and the third inequality is from \eqref{eq:martingale-qv-bound}. If $\int \varphi \dd \mu = 0$, we can solve $-\LL_0 \psi = \varphi$ and then~\eqref{eq:pre-KV} applied to $\psi$ gives
\[
	\E \left[ \sup_{t \leqslant T} \left| \int_0^t \varphi (u_s) \dd s \right|^p \right] \lesssim T^{p / 2} \|  c_{p}^\NN (-\LL_0)^{-1/2} \varphi \|^p,
\]
i.e. a stronger version of iii. Therefore, we also have uniqueness in law for any process which satisfies i'., ii'. and iii'., or alternatively i., ii., and~\eqref{eq:pre-KV}.

Note that the constant $c_{2p}^\NN$ in iii. is not a typo. This is what we get if we consider a non-stationary process whose initial condition has an $L^2$-density with respect to $\mu$ and we apply Lemma \ref{lem:switch-measure} to pass to a stationary process that has the properties above.
\end{remark}

In the following we assume that $u$ solves the cylinder function martingale
problem for $\LL$ with initial distribution $\nu$, and we fix the
filtration $\CF_t = \sigma (u_s : s \in [0, t])$, $t \geqslant 0$.

\begin{lemma}
  \label{lem:cylinder-mp-testfct}Let $\varphi (u) = \Phi (u (f_1), \ldots, u
  (f_k)) \in \CC$ be a cylinder function. Then the process
  \[ M^{\varphi}_t = \varphi (u_t) - \varphi (u_0) - \lim_{m \rightarrow
     \infty} \int_0^t \LL^m \varphi (u_s) \dd s \]
  is a continuous martingale with respect to $(\CF_t)$, where
  \[ \LL^m \varphi (u) =\LL_0 \varphi (u) + \sum_{i = 1}^k
     \partial_i \Phi (u (f_1), \ldots, u (f_k)) \langle B_m (u), f_i
     \rangle_{L^2 (\T)} \]
  for $B_m (u) \assign \partial_x \Pi_m (\Pi_m u)^2$.
\end{lemma}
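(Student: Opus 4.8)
The plan is to reduce the statement to the martingale property that we already know for cylinder functions from the Galerkin picture together with the a priori control coming from incompressibility and the Itô trick. Fix a cylinder function $\varphi(u) = \Phi(u(f_1),\dots,u(f_k))$. The first step is to make sense of $\lim_{m\to\infty}\int_0^t \LL^m\varphi(u_s)\dd s$: we split $\LL^m\varphi = \LL_0\varphi + \GG^m\varphi$, where $\LL_0\varphi$ is again a (bounded-below-degree) cylinder-type function to which the Itô trick of condition iii.\ applies directly, and $\GG^m\varphi$ is the genuinely singular part. For $\GG^m\varphi$ I would first rewrite $\langle B_m(u),f_i\rangle_{L^2}$ in terms of the mollified field: since $B_m(u) = \partial_x\Pi_m(\Pi_m u)^2$ and, choosing the approximation of the identity to be $\rho^{1/m}$ (or more precisely the Fejér-type kernel corresponding to $\Pi_m$), one has $\langle B_m(u),f_i\rangle_{L^2} = \langle \partial_x(u\ast\rho^{\varepsilon_m})^2, f_i\rangle_{L^2}$ up to the harmless projection onto nonzero modes, so $\sum_i\partial_i\Phi(u(f_1),\dots)\langle B_m(u),f_i\rangle_{L^2}$ is exactly $(\LL^{\varepsilon_m}-\LL_0)$ applied coordinatewise, dressed with the smooth factor $\partial_i\Phi$. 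Then I would use condition ii., which tells us that $\int_0^t \LL^\varepsilon u_s(f)\dd s$ converges (the limit being $u_t(f)-u_0(f)-M^f_t$), to promote convergence of $\int_0^t[\LL^{\varepsilon_m}u_s(f_i)]\dd s$ to convergence of $\int_0^t\sum_i\partial_i\Phi(u_s(f_1),\dots)[\LL^{\varepsilon_m}u_s(f_i) - \LL_0 u_s(f_i)]\dd s$; the smooth prefactor is handled by a standard argument (approximating the bounded-variation integrator and using that $\partial_i\Phi$ is continuous in the $f_j$-coordinates, or more robustly by noting that $\GG^m\varphi$ is itself a chaos-finite function and applying the $m$-uniform bound~\eqref{eq:G-m-dependent} together with incompressibility to get $L^1$-equicontinuity of the time integrals). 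This establishes that the limit exists in, say, $L^1(\P)$ uniformly on compact time intervals, so $M^\varphi_t$ is well defined.

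The second step is to identify $M^\varphi$ as a martingale. For finite $m$ the process $M^{m,\varphi}_t = \varphi(u^m_t)-\varphi(u^m_0)-\int_0^t\LL^m\varphi(u^m_s)\dd s$ is a martingale by Itô's formula applied to the Galerkin SDE~\eqref{eq:burgers-galerkin} — but here $u$ solves the \emph{cylinder function} martingale problem, not the Galerkin one, so I cannot invoke that directly. Instead I would argue in the limit: take $0\le s<t$ and $G$ bounded continuous on $C([0,s],\CS')$, and show
\[
	\E\!\left[\Big(\varphi(u_t)-\varphi(u_s)-\lim_{m\to\infty}\int_s^t\LL^m\varphi(u_r)\dd r\Big)G((u_r)_{r\le s})\right] = 0.
\]
Here the martingale property of $M^f$ from condition ii.\ handles the ``linear'' part $u_t(f_i)-u_s(f_i)-\int_s^t\LL^{\varepsilon}u_r(f_i)\dd r = M^{f_i}_t - M^{f_i}_s$, and for the nonlinear function $\Phi$ of these I would use that $\varphi(u_t)-\varphi(u_s) = \int_s^t$ of an Itô-type expansion: since each $u(f_i)$ is (by ii.) a semimartingale with explicit drift $\lim_\varepsilon \LL^\varepsilon u(f_i)$ and bracket $\langle M^{f_i},M^{f_j}\rangle_t = 2t\langle\partial_x f_i,\partial_x f_j\rangle_{L^2}$ — the cross-brackets follow by polarization from the given quadratic variations — Itô's formula for $\Phi$ applied to the vector $(u(f_1),\dots,u(f_k))$ gives precisely $M^\varphi_t = \sum_i\int_0^t\partial_i\Phi\,\dd M^{f_i}_r$ plus the correct second-order term $\sum_{i,j}\int_0^t\partial^2_{ij}\Phi\,\langle\partial_x f_i,\partial_x f_j\rangle\dd r$, which is exactly the $\LL_0$-Hessian term in the definition of $\LL_0\varphi$. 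One has to be slightly careful that Itô's formula is legitimately applicable: the drift $\lim_\varepsilon\LL^\varepsilon u_\cdot(f_i)$ is only known to be of \emph{locally integrable} variation a priori, but that is exactly enough for Itô's formula for the $C^2$ function $\Phi$ (the polynomial growth of derivatives is controlled by incompressibility, so all terms are integrable), yielding the claim.

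The third step is merely to record continuity: $M^\varphi$ is continuous because $\varphi(u_t)$ is continuous in $t$ (composition of the $C(\R_+,\CS')$-path with the continuous cylinder functional), and the integral term is continuous by construction. I expect the main obstacle to be the interchange of the $m\to\infty$ (equivalently $\varepsilon\to0$) limit with the nonlinear prefactor $\partial_i\Phi$ and with the conditional expectation against $G$ — that is, making the passage from ``$M^f$ is a martingale for every linear $f$'' to ``$M^\varphi$ is a martingale for every cylinder $\varphi$'' fully rigorous. The clean way is to first establish that $(u(f_1),\dots,u(f_k))$ is genuinely an $\R^k$-valued semimartingale (drift of finite variation on compacts, which uses condition ii.\ and incompressibility to get the integrability $\E\int_0^T|\lim_\varepsilon\LL^\varepsilon u_s(f_i)|\dd s<\infty$) and then quote Itô's formula; the uniform-in-$m$ bound~\eqref{eq:G-m-dependent} combined with Lemma~\ref{lem:switch-measure}-type estimates provides the dominated-convergence input needed to identify $\lim_m\int_s^t\GG^m\varphi(u_r)\dd r$ with the corresponding limit of the linearized drifts weighted by $\partial_i\Phi$.
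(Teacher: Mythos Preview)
Your proposal has a genuine gap at its core: you repeatedly assume that the limiting drift process $t\mapsto \lim_{\varepsilon\to 0}\int_0^t \LL^{\varepsilon} u_s(f_i)\,\dd s$ is of (locally integrable) bounded variation, and that $(u(f_1),\dots,u(f_k))$ is therefore an $\R^k$-valued semimartingale to which the classical It\^o formula applies. This is false. As the introduction of the paper states explicitly, the process $\lim_{\varepsilon\to 0}\int_0^t \langle\partial_x(u_s\ast\rho^\varepsilon)^2,f\rangle\,\dd s$ is \emph{not} of finite variation, and consequently $u_t(f)$ is not a semimartingale. So your second step, ``It\^o's formula for $\Phi$ applied to the vector $(u(f_1),\dots,u(f_k))$'', cannot be carried out as written. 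A secondary slip: you call~\eqref{eq:G-m-dependent} an ``$m$-uniform bound'', but it carries a factor $m^{1/2}$ and is precisely the non-uniform estimate.

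The paper closes this gap by working at level $m$, where the drift $A^{m,f_i}_t=\int_0^t\langle B_m(u_s),f_i\rangle\,\dd s$ \emph{is} of finite variation, so It\^o's formula applies to the auxiliary semimartingale $u^m_t(f_i)=u_0(f_i)+\int_0^t u_s(\Delta f_i)\,\dd s+A^{m,f_i}_t+M^{f_i}_t$. The passage $m\to\infty$ is then handled not via bounded variation but via \emph{Young integration}: the It\^o trick (through~\cite[Corollary~3.17]{Gubinelli2018Energy}) yields convergence of $A^{m,f_i}$ to $A^{f_i}$ in $L^p(C^\alpha([0,T],\R))$ for every $\alpha<3/4$, and since $\alpha$ can be taken above $1/2$ the integrals $\int_0^t \partial_i\Phi(u^m_s(f_1),\dots)\,\dd A^{m,f_i}_s$ make sense as Young integrals and pass to the limit by continuity of the Young integral in both arguments. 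That H\"older regularity of the drift term is the missing ingredient in your plan; without it there is no legitimate It\^o expansion for $\varphi(u_t)$.
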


\begin{proof}
  Let us write
  \begin{align*}
    u^m_t (f) & \assign u_0 (f) + \int_0^t u_s (\Delta f) \dd s + A^{m,
    f}_t + M^f_t\\
    & \assign u_0 (f) + \int_0^t u_s (\Delta f) \dd s + \int_0^t \langle
    B_m (u_s), f \rangle_{L^2 (\T)} \dd s + M^f_t
  \end{align*}
  for $f \in C^{\infty} (\T)$. Then by It{\^o}'s formula the process
  \[ \varphi (u^m_t) - \varphi (u^m_0) - \int_0^t \LL_0 \varphi
     (u^m_s) \dd s - \int_0^t \sum_{i = 1}^k \partial_i \Phi (u^m_s (f_1),
     \ldots, u^m_s (f_k)) \dd A^{m, f_i}_s \]
  is a martingale. In \cite[Corollary 3.17]{Gubinelli2018Energy} it is shown that for
  all $\alpha < 3 / 4$ and all $T > 0$ and $p \in [1, \infty)$ we have
  $\E [\| A^{m, f_i} - A^{f_i} \|_{C^{\alpha} ([0, T],
  \R)}^p] \rightarrow 0$ for the limit $A^{f_i}$ of $A^{m, f_i}$; here $C^\alpha([0,T], \R)$ is the space of $\alpha$-H\"older continuous functions. Strictly speaking in \cite{Gubinelli2018Energy} only the approximation $\partial_x (\Pi_m u)^2$ is considered, but it is easy to generalize the analysis to $\partial_x \Pi_m (\Pi_m u)^2$. In particular
  \[ \lim_{m \rightarrow \infty} \E \left[ \left| \varphi (u^m_t) -
     \varphi (u^m_0) - \int_0^t \LL_0 \varphi (u^m_s) \dd s -
     \left( \varphi (u_t) - \varphi (u_0) - \int_0^t \LL_0 \varphi
     (u_s) \dd s \right) \right|^p \right] = 0. \]
  Moreover, we can interpret $\int_0^t \sum_{i = 1}^k \partial_i \Phi (u^m_s
  (f_1), \ldots, u^m_s (f_k)) \dd A^{m, f_i}_s$ as a Young integral and e.g.
  by Theorem~1.16 in \cite{Lyons2007} together with the Cauchy-Schwarz
  inequality we have
  \begin{align*}
    & \E \left[ \left| \int_0^t \sum_{i = 1}^k \partial_i \Phi
    (u^m_s (f_1), \ldots, u^m_s (f_k)) \dd A^{m, f_i}_s - \int_0^t \sum_{i
    = 1}^k \partial_i \Phi (u_s (f_1), \ldots, u_s (f_k)) \dd A^{m, f_i}_s
    \right| \right]\\
    & \lesssim \sum_{i = 1}^k \E [\| \partial_i \Phi (u^m (f_1),
    \ldots, u^m (f_k)) - \partial_i \Phi (u (f_1), \ldots, u (f_k))
    \|_{C^{\beta} ([0, T], \R)}^2]^{1 / 2} \E [\| A^{m, f_i}
    \|_{C^{\alpha} ([0, T], \R)}^2]^{1 / 2}
  \end{align*}
  whenever $\beta > 1 - \alpha$ and $\alpha < 3 / 4$. Since $\partial_i \Phi$
  is locally Lipschitz continuous with polynomial growth of the derivative and
  we can take $\beta < \alpha$, the convergence of the first expectation to
  zero follows from the convergence of $u^m$ to $u$ in $L^p (C^{\alpha} ([0,
  T], \R))$. The second expectation $\E [\| A^{m, f_i}
  \|_{C^{\alpha} ([0, T], \R)}^2]$ is uniformly bounded in $m$ by the
  considerations above, and therefore the difference converges to zero. Very
  similar arguments yield
  \[ \lim_{m \rightarrow \infty} \E \left[ \left| \int_0^t \sum_{i =
     1}^k \partial_i \Phi (u_s (f_1), \ldots, u_s (f_k)) \dd A^{m, f_i}_s -
     \int_0^t \sum_{i = 1}^k \partial_i \Phi (u_s (f_1), \ldots, u_s (f_k))
     \dd A^{f_i}_s \right| \right] = 0, \]
  and since all the convergences are in $L^1$ we get that
  \begin{align*}
    M^{\varphi}_t & = \varphi (u_t) - \varphi (u_0) - \int_0^t \LL_0
    \varphi (u_s) \dd s - \int_0^t \sum_{i = 1}^k \partial_i \Phi (u_s
    (f_1), \ldots, u_s (f_k)) \dd A^{f_i}_s\\
    & = \varphi (u_t) - \varphi (u_0) - \int_0^t \LL_0 \varphi (u_s)
    \dd s - \lim_{m \rightarrow \infty} \int_0^t \sum_{i = 1}^k \partial_i
    \Phi (u_s (f_1), \ldots, u_s (f_k)) \dd A^{m, f_i}_s
  \end{align*}
  is a continuous martingale.
\end{proof}

While it is not obvious from the proof, here we already used that the It{\^o}
trick works for $(u_t)$. Indeed, Corollary 3.17 of \cite{Gubinelli2018Energy}
crucially relies on this.

\begin{theorem}\label{thm:cylinder-mp}
  Let $u$ solve the cylinder function martingale problem for $\LL$
  with initial distribution $\nu$. Then $u$ solves the martingale problem for
  $\LL$ in the sense of Section~\ref{sec:mp-ex}, and in particular
  its law is unique by Theorem~\ref{thm:mp-uniq}.
\end{theorem}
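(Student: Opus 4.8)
The target is to show that any process $u$ solving the cylinder function martingale problem satisfies Definition~\ref{def:mart-prob}. The absolute continuity of $\mathrm{law}(u_t)$ with respect to $\mu$ and the integrability $\int_0^t|\LL\varphi(u_s)|\,\dd s<\infty$ a.s.\ are immediate from incompressibility (condition i.), since $\LL\varphi\in\Gamma L^2$ for $\varphi\in\CD(\LL)$; the substance is the martingale property of $M^\varphi_t:=\varphi(u_t)-\varphi(u_0)-\int_0^t\LL\varphi(u_s)\,\dd s$ for $\varphi\in\CD(\LL)$.

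I would first reduce to a convenient dense subclass. By Lemma~\ref{lem:dom-def} together with incompressibility (which gives $\E[|\psi(u_t)-\tilde\psi(u_t)|]\lesssim\|\psi-\tilde\psi\|$ and hence $M^{\psi^k}\to M^\varphi$ in $L^1$ whenever $\psi^k\to\varphi$ and $\LL\psi^k\to\LL\varphi$ in $\Gamma L^2$), it suffices to treat $\varphi=\KK\varphi^\sharp$ with $\varphi^\sharp$ finite chaos with smooth kernels. For such $\varphi^\sharp$ the exponential weights $c_{2p}^\NN$ forced on us by the It\^o trick (condition iii.) are harmless: fixing $p=2$ and a cutoff $N_n$ adapted simultaneously to $c_4^\NN$ and the polynomial weights below, Lemma~\ref{lem:controlled-fct} and Proposition~\ref{prop:domain} show that $\varphi$, $\varphi^\succ:=(-\LL_0)^{-1}\GG^\succ\varphi$, $\GG^\prec\varphi$ and $\GG^\succ\varphi$ all lie in the appropriate $c_4^\NN$-weighted $(-\LL_0)$-scaled Fock spaces.

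Next I would approximate $\varphi$ by the cylinder functions $\psi_j:=\Theta_j\varphi$, where $\Theta_j$ truncates both the chaos degree and the Fourier modes to $\le j$; by the previous step $\psi_j\to\varphi$ in all the relevant norms. Since $\LL_0$ commutes with $\Theta_j$ and the controlled Ansatz~\eqref{eq:controlled} gives $\LL_0\varphi=\LL_0\varphi^\sharp-\GG^\succ\varphi$, Lemma~\ref{lem:cylinder-mp-testfct} applied to the cylinder function $\psi_j$ yields that
\[ M^{\psi_j}_t = \psi_j(u_t) - \psi_j(u_0) - \int_0^t \Theta_j\LL_0\varphi^{\sharp}(u_s)\,\dd s + \int_0^t (\Theta_j\GG^{\succ}\varphi)(u_s)\,\dd s - \lim_{m\to\infty}\int_0^t \GG^m\psi_j(u_s)\,\dd s \]
is a martingale. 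I would then split $\GG^m=\GG^{m,\prec}+\GG^{m,\succ}$: the $\prec$-part converges to $\int_0^t\GG^\prec\psi_j(u_s)\,\dd s$, a genuine integral of the cylinder function $\GG^\prec\psi_j$ (recall $\GG^\prec$ maps cylinder functions to cylinder functions once the cutoffs are finite). The $\succ$-part $\int_0^t\GG^{m,\succ}\psi_j(u_s)\,\dd s$ is distributional in the limit, but by the It\^o trick applied to the cylinder function $\GG^{m,\succ}\psi_j$ together with the uniform-in-$m$ bounds of Lemma~\ref{lem:G-apriori} it is bounded in $L^2(C([0,T]))$ uniformly in $m$, hence converges to a limit $\mathcal I_t(\GG^\succ\psi_j)$ that depends continuously (in the $\|c_4^\NN(-\LL_0)^{-1/2}\cdot\|$-type norm) on $\GG^\succ\psi_j$; the same continuity identifies $\int_0^t(\Theta_j\GG^\succ\varphi)(u_s)\,\dd s\to\mathcal I_t(\GG^\succ\varphi)$. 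Letting $j\to\infty$: $\psi_j(u_t)\to\varphi(u_t)$ and $\int_0^t\GG^\prec\psi_j(u_s)\,\dd s\to\int_0^t\GG^\prec\varphi(u_s)\,\dd s$ by incompressibility, $\int_0^t\Theta_j\LL_0\varphi^\sharp(u_s)\,\dd s\to\int_0^t\LL_0\varphi^\sharp(u_s)\,\dd s$, and the two $\GG^\succ$-contributions cancel since $\GG^\succ\psi_j\to\GG^\succ\varphi$; so $M^{\psi_j}_t\to\varphi(u_t)-\varphi(u_0)-\int_0^t(\LL_0\varphi^\sharp+\GG^\prec\varphi)(u_s)\,\dd s=M^\varphi_t$ in $L^1$, and an $L^1$-limit of martingales is a martingale. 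A final density argument in $\CD(\LL)$ (using incompressibility again) removes the regularity assumption on $\varphi^\sharp$, and uniqueness in law then follows from Theorem~\ref{thm:mp-uniq}.

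The main obstacle is the middle of the previous paragraph: the cylinder function martingale problem only controls the \emph{full} drift $\lim_m\int_0^t\GG^m\psi(u_s)\,\dd s$ of a cylinder function, whereas the controlled structure forces a split into a $\GG^\succ$-part (distributional, so that $\int_0^t\GG^\succ\psi(u_s)\,\dd s$ only makes sense via the continuous It\^o-trick extension) and a $\GG^\prec$-part, and the $\GG^\succ$-part must be matched against the term $\int_0^t\Theta_j\GG^\succ\varphi(u_s)\,\dd s$ produced by $\LL_0\varphi^\succ$. Making this rigorous requires checking that $\varphi\mapsto\int_0^\cdot\varphi(u_s)\,\dd s$ extends continuously to the distributional spaces in which $\GG^\succ\varphi$ and $\GG^m\psi$ live — this is exactly where the uniform-in-$m$ estimates of Lemma~\ref{lem:G-apriori} enter — and that this extension is linear and consistent across the $\GG^m$- and $\Theta_j$-approximations, so that $\mathcal I_t(\GG^\succ\varphi)+\int_0^t\GG^\prec\varphi(u_s)\,\dd s=\lim_m\int_0^t\GG^m\varphi(u_s)\,\dd s$; keeping the exponential weights $c_{2p}^\NN$ finite throughout is what dictates the initial reduction to finite-chaos $\varphi^\sharp$.
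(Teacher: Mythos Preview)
Your approach is correct, but it is considerably more involved than the paper's, and the extra complications stem from a worry that turns out to be unnecessary.

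The paper does not invoke the controlled structure $\varphi = (-\LL_0)^{-1}\GG^\succ\varphi + \varphi^\sharp$ at all in this proof, and in particular never splits $\GG$ into $\GG^\prec$ and $\GG^\succ$. Instead it takes the same truncation $\varphi^M = \Theta_M\varphi$ and observes that the \emph{full} difference $\LL^m\varphi^M - \LL\varphi$ lies in $(-\LL_0)^{1/2}(1+\NN)\Gamma L^2$: indeed $\LL_0(\varphi^M-\varphi)$ does because $\varphi\in(-\LL_0)^{-1/2}\Gamma L^2$, and $\GG^m\varphi^M - \GG\varphi$ does by Lemma~\ref{lem:G-apriori}. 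The It\^o trick with $p=1$ (so $c_{2p}=c_2=1$, no exponential weight) then gives directly
\[
\E\left[\left|\int_0^t(\LL^m\varphi^M - \LL\varphi)(u_s)\,\dd s\right|\right] \lesssim_t \|(-\LL_0)^{-1/2}(\LL^m\varphi^M - \LL\varphi)\|,
\]
where the extension of the It\^o trick from cylinder functions to $\LL\varphi\in\Gamma L^2$ is a one-line density argument combining incompressibility with the bound $\|(-\LL_0)^{-1/2}\psi\|\lesssim\|\psi\|$ on the torus. The right-hand side then goes to zero by the same Lemma~\ref{lem:G-apriori} estimates and dominated convergence. Your $\GG^\succ$-cancellation argument and the reduction to finite-chaos $\varphi^\sharp$ to handle exponential weights are both sound, but they reproduce by hand a cancellation that is already built into the statement $\LL\varphi\in\Gamma L^2$; the paper's route exploits this directly and avoids the bookkeeping.
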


\begin{proof}
  Let $\varphi \in \CD (\LL)$ and define $\varphi^M$ as the
  projection of $\varphi$ onto the chaos components of order $\leqslant M$,
  and in each chaos we project onto the Fourier modes $| k |_{\infty}
  \leqslant M$. In particular, $\varphi^M \in \CC$ and by
  Lemma~\ref{lem:cylinder-mp-testfct} the process
  \[ M^{\varphi^M}_t = \varphi^M (u_t) - \varphi^M (u_0) - \lim_{m \rightarrow
     \infty} \int_0^t \LL^m \varphi^M (u_s) \dd s \]
  is a martingale. By construction $\E [| \varphi^M (u_t) - \varphi^M
  (u_0) - \varphi (u_t) - \varphi (u_0) |] \rightarrow 0$ as $M \rightarrow
  \infty$, so if we can show that
  \[ \lim_{M \rightarrow \infty} \E \left[ \left| \lim_{m \rightarrow
     \infty} \int_0^t \LL^m \varphi^M (u_s) \dd s - \int_0^t
     \LL \varphi (u_s) \dd s \right| \right] = 0, \]
  then the proof is complete. Since we saw in the proof of
  Lemma~\ref{lem:cylinder-mp-testfct} that the integral $\int_0^t
  \LL^m \varphi^M (u_s) \dd s$ converges in $L^1$, we can take out
  the limit in $m$ from the expectation (or we could just apply Fatou's
  lemma), so that it suffices to show that the right hand side of the following inequality is zero:
  \begin{align*}
    & \lim_{M \rightarrow \infty} \lim_{m \rightarrow \infty} \E
    \left[ \left| \int_0^t (\LL^m \varphi^M -\LL \varphi)
    (u_s) \dd s \right| \right]\\
    & \lesssim_t \lim_{M \rightarrow \infty} \lim_{m \rightarrow \infty} \|
    (-\LL_0)^{- 1 / 2} (\LL^m \varphi^M -\LL
    \varphi) \|\\
    & \lesssim \lim_{M \rightarrow \infty} \lim_{m \rightarrow \infty} [\|
    (-\LL_0)^{1 / 2} (\varphi^M - \varphi) \| + \|
    (-\LL_0)^{- 1 / 2} (\GG^m \varphi^M -\GG
    \varphi) \| ].
  \end{align*}
  For the first term on the right hand side this follows from the fact that
  $\| (-\LL_0)^{1 / 2} \varphi \| \lesssim \| (-\LL_0)^{1 /
  2} \varphi^{\sharp} \|$ by Lemma~\ref{lem:controlled-fct} and from the
  dominated convergence theorem. For the second term on the right hand side we have by the triangle inequality and Lemma~\ref{lem:G-apriori}
  \begin{align*}
  	\|(-\LL_0)^{- 1 / 2} (\GG^m \varphi^M -\GG\varphi) \| & \leqslant \|(-\LL_0)^{- 1 / 2} \GG^m( \varphi^M - \varphi) \| + \|(-\LL_0)^{- 1 / 2} (\GG^m  -\GG) \varphi) \| \\
  	& \lesssim \|(-\LL_0)^{1 / 2} (1+ \NN) ( \varphi^M - \varphi) \| + \|(-\LL_0)^{- 1 / 2} (\GG^m  -\GG) \varphi) \|.
  \end{align*}
  The first term vanishes as $M \to \infty$, by the same argument as before. The second term vanishes by the uniform estimates of Lemma~\ref{lem:G-apriori} together with the dominated convergence theorem which shows that $(\GG^m  -\GG)$ goes to zero as $m \to \infty$.
\end{proof}

\section{Extensions}\label{sec:extensions}

The uniqueness in law of solutions to the cylinder function martingale problem
is not new, the stationary case was previously treated in \cite{Gubinelli2018Energy}
and a non-stationary case (even slightly more general than the one we study
here) in \cite{Gubinelli2018Probabilistic}. This was extended to Burgers equation with
Dirichlet boundary conditions in \cite{Goncalves2017}. However, these works
are crucially based on the Cole-Hopf transform that linearizes the equation,
and they do not say anything about the generator
$\LL$. In the following we show that our arguments adapt to some variants of Burgers equation, none of which can be linearized via the Cole-Hopf transform. In that
sense our new approach is much more robust than the previous one.

\subsection{Multi-component Burgers equation}

Let us consider the multi-component Burgers equation studied in
\cite{Funaki2017,Kupiainen2017}. This equation reads for $u \in C (\R_+,
(\CS')^d)$ as
\[ \partial_t u^i = \Delta u^i + \sum_{j, j' = 1}^d \Gamma^i_{j j'} \partial_x
   (u^j u^{j'}) + \sqrt{2} \partial_x \xi^i, \qquad i = 1, \ldots, d, \]
where $(\xi^1, \ldots, \xi^d)$ are independent space-time white noises and we assume the so called \emph{trilinear condition} of
\cite{Funaki2017}:
\[ \Gamma^i_{j j'} = \Gamma^i_{j' j} = \Gamma^j_{j' i}, \]
i.e. that $\Gamma$ is symmetric in its three arguments $(i, j, j')$. Under
this condition the product measure $\mu^{\otimes d}$ is invariant for $u$, also at the level of the Galerkin approximation, see Proposition~5.5 of
\cite{Funaki2017}. We can interpret $\mu^{\otimes d}$ as a white noise on $L^2_0
(\{ 1, \ldots, d \} \times \T) \simeq L^2_0 (\T,
\R^d)$, equipped with the inner product
\[ \langle f, g \rangle_{L^2 (\T \times \{ 1, \ldots, d \})} \assign
   \sum_{i = 1}^d \langle f^i, g^i \rangle_{L^2 (\T)} \assign \sum_{i
   = 1}^d \langle f (i, \cdot), g (i, \cdot) \rangle_{L^2 (\T)}
\]
and where we assume that $\hat{f} (i, 0) \assign \widehat{f^i} (0) = 0$ for
all $i$, and similarly for $g$; see also Example~1.1.2 of \cite{Nualart2006}. To
simplify notation we write $\T_d =\T \times \{ 1, \ldots, d
\}$ in what follows, not to be confused with $\T^d$. Cylinder
functions now take the form $\varphi (u) = \Phi (u (f_1), \ldots, u (f_J))$
for $\Phi \in C^2_p (\R^J)$ and $f_j \in C^{\infty} (\T_d)
\simeq C^{\infty} (\T, \R^d)$, where the duality pairing $u
(f)$ is defined as
\[ u (f) = \sum_{i = 1}^d u^i (f^i) = \sum_{i = 1}^d u^i (f (i, \cdot)), \]
and in the following we switch between the notations $f^i (x) = f (i, x)$
depending on what is more convenient. The chaos expansion takes symmetric
kernels $\varphi_n \in L^2_0 (\T_d^n)$ as input, and the Malliavin
derivative acts on the cylinder function $\varphi (u) = \Phi (u (f_1), \ldots,
u (f_J))$ with $f_j \in C^{\infty} (\T_d) \simeq C^{\infty}
(\T, \R^d)$ and $\Phi \in C^2_p (\R^J)$ as
\[ D_{\zeta} \varphi = D_{(i x)} \varphi = \sum_{j = 1}^J \partial_j \Phi (u
   (f_1), \ldots, u (f_J)) f_j^i (x) = \sum_{j = 1}^J \partial_j \Phi (u
   (f_1), \ldots, u (f_J)) f_j (\zeta), \]
where from now on we write $\zeta$ for the elements of $\T_d$. We
also have $D_{\zeta} W_n (\varphi_n) = n W_{n - 1} (\varphi_n (\zeta,
\cdot))$ as for $d = 1$. Let us define formally
\[ B (u) (\zeta) = B (u) (i, x) = \sum_{j, j' = 1}^d \Gamma^i_{j j'}
   \partial_x (u^j u^{j'}) (x) = W_2 \left( \sum_{j, j' = 1}^d \Gamma^i_{j j'}
   \partial_x (\delta_{(j x)} \otimes \delta_{(j' x)}) \right), \]
where $\delta_{(j x)} (i y) =\1_{i = j} \delta (x - y)$. Then the
Burgers part of the generator is formally given by
\[ \GG \varphi (u) = \langle B (u), D \varphi (u) \rangle_{L^2
   (\T_d)} \backassign \int_{\zeta} B (u) (\zeta) D_{\zeta} \varphi
   (u) \dd \zeta . \]
This becomes rigorous if we consider the Galerkin approximation with
cutoff $\Pi_m$, but for simplicity we continue to formally argue in the limit
$m = \infty$. We have the following generalization of Lemma~\ref{lem:Gm}:

\begin{lemma}
  We have $\GG=\GG_+ +\GG_-$, where
  \begin{gather*}
    \GG_+ W_n (\varphi_n) = n W_{n + 1} \left( \int_{(i x)} \sum_{j,
    j' = 1}^d \Gamma^i_{j j'} \partial_x (\delta_{(j x)} \otimes \delta_{(j'
    x)}) (\cdot) \varphi_n ((i x), \cdot) \right),\\
    \GG_- W_n (\varphi_n) = 2 n (n - 1) W_{n - 1} \left( \int_{(i_1
    x_1), (i_2 x_2)} \sum_{j, j' = 1}^d \Gamma^{i_1}_{j j'} \partial_{x_1}
    (\delta_{(j x_1)} (i_2 x_2) \delta_{(j' x_1)} (\cdot)) \varphi_n ((i_1
    x_1), (i_2 x_2), \cdot) \right),
  \end{gather*}
  and moreover we have for all $\varphi_{n + 1} \in L^2 (\T^{n + 1})$
  and $\varphi_n \in L^2 (\T^n)$
  \[ \langle W_{n + 1} (\varphi_{n + 1}), \GG_+ W_n (\varphi_n)
     \rangle = - \langle \GG_- W_{n + 1} (\varphi_{n + 1}), W_n
     (\varphi_n) \rangle . \]
\end{lemma}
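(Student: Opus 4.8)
The plan is to transcribe the proof of Lemma~\ref{lem:Gm} almost verbatim, with the evenness of $\rho^m$ --- which was the only structural input there --- replaced by the total symmetry of $\Gamma$ in $(i,j,j')$ encoded in the trilinear condition. As in Lemma~\ref{lem:Gm} I would run the honest computation at the level of the Galerkin approximation $\GG^m$, with the projections $\Pi_m$ inserted so that $B(u)$ is a genuine second chaos element, and only pass to the formal limit $m=\infty$ at the end; I suppress $m$ below.

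First I would record $B(u)(\zeta) = W_2(g_\zeta)$ for $\zeta = (i,x)$, with kernel
\[
  g_\zeta(\zeta',\zeta'') = \sum_{j,j'=1}^d \Gamma^i_{jj'}\,\partial_x\big(\delta_{(jx)}(\zeta')\otimes\delta_{(j'x)}(\zeta'')\big),
\]
the deltas being the $\Pi_m$-regularised ones; the diagonal contraction term that a priori accompanies the second chaos projection is a total derivative in $x$ and drops out, just as $\int\partial_x\rho^m_x(s)\|\rho^m_s\|^2_{L^2}\,\dd s$ did in Lemma~\ref{lem:Gm}. Then, using $D_\zeta W_n(\varphi_n) = nW_{n-1}(\varphi_n(\zeta,\cdot))$ and the product formula for Wiener--It\^o integrals \cite[Proposition~1.1.3]{Nualart2006}, I would expand
\[
  \GG W_n(\varphi_n) = n\int_\zeta W_2(g_\zeta)\,W_{n-1}(\varphi_n(\zeta,\cdot))
\]
into the three pieces of chaos order $n+1$, $n-1$ and $n-3$, obtained by contracting $0$, $1$ and $2$ indices of $g_\zeta$ with $\varphi_n(\zeta,\cdot)$. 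The zero-contraction piece is precisely the stated $\GG_+ W_n(\varphi_n)$ and the single-contraction piece is $\GG_- W_n(\varphi_n)$ after renaming the integration variables.

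The only real work is to show that the doubly-contracted $W_{n-3}$ term vanishes. After the two contractions and one integration by parts in $x$ it has kernel, up to the combinatorial prefactor $n(n-1)(n-2)$,
\[
  \int_x \sum_{i,j,j'=1}^d \Gamma^i_{jj'}\,(\partial_2+\partial_3)\varphi_n\big((i,x),(j,x),(j',x),\cdot\big),
\]
where $\partial_2,\partial_3$ are the spatial derivatives in the second and third arguments. Now I would use that $\varphi_n$ is symmetric in its arguments and that $\Gamma$ is totally symmetric in $(i,j,j')$ to conclude that the three sums $\sum_{i,j,j'}\Gamma^i_{jj'}\,\partial_\ell\varphi_n((i,x),(j,x),(j',x),\cdot)$, $\ell=1,2,3$, all agree: relabelling the summation indices and simultaneously permuting the corresponding arguments of $\varphi_n$ moves the derivative from one slot to another while leaving $\Gamma$ unchanged. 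Hence the integrand equals $\tfrac23\sum_{i,j,j'}\Gamma^i_{jj'}\,\tfrac{\dd}{\dd x}\big[\varphi_n((i,x),(j,x),(j',x),\cdot)\big]$, a total derivative on $\T$, so its $x$-integral is $0$. This is the exact analogue of the cancellation of $\partial_s\big(\rho^m_s(x)\rho^m_s(y)\rho^m_s(z)\big)$ in Lemma~\ref{lem:Gm}, with the trilinear condition playing the role there played by the evenness of $\rho^m$.

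Finally the adjoint identity $\langle W_{n+1}(\varphi_{n+1}),\GG_+ W_n(\varphi_n)\rangle = -\langle\GG_- W_{n+1}(\varphi_{n+1}),W_n(\varphi_n)\rangle$ is a direct computation: insert the explicit kernels of $\GG_+$ and $\GG_-$, use that $\langle\varphi_{n+1},\psi\rangle_{L^2(\T_d^{n+1})} = \langle\varphi_{n+1},\tilde\psi\rangle_{L^2(\T_d^{n+1})}$ to avoid symmetrising, integrate the $\partial_x$ by parts once, and relabel the integration variables; the total symmetry of $\Gamma$ is exactly what makes the two sides match after relabelling, playing the role that $\rho^m_s(x) = \rho^m_x(s)$ played in Lemma~\ref{lem:Gm}. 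I expect the index and prefactor bookkeeping in this last step, together with the symmetrisation argument for the vanishing of the $W_{n-3}$ term, to be the only slightly delicate points; the rest is a transcription of Lemma~\ref{lem:Gm}, and as in the remark following that lemma the detailed form of the regularisation $\Pi_m$ plays no role.
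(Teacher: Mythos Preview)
Your proposal is correct and follows exactly the approach the paper intends: the paper's proof is simply ``This follows similarly as in Lemma~\ref{lem:Gm}, making constant use of the trilinear condition for $\Gamma$,'' and you have spelled out precisely how that transcription goes, correctly identifying the total symmetry of $\Gamma$ as the replacement for the evenness of $\rho^m$ in both the vanishing of the $W_{n-3}$ term and the adjoint computation.
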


\begin{proof}
  This follows similarly as in Lemma~\ref{lem:Gm}, making constant use of the
  trilinear condition for $\Gamma$.
\end{proof}

The Fourier variables now are indexed by $\Z_0 \times \{ 1, \ldots, d
\} \backassign \Z_d$, and we write $(i k)$, $(i, k)$ or $\kappa$ for
the elements of $\Z_d$, and
\[ \hat{f} (\kappa) = \hat{f} (i, k) = \int_{\T} e^{- 2 \pi \iota k
   x} f (i, x) \dd x, \qquad \kappa = (i k) \in \Z_d . \]
We have for $\varphi = \sum_n W_n (\varphi_n)$:
\[ \| \varphi \|^2 = \sum_n n! \sum_{\kappa \in \Z_d^n} |
   \hat{\varphi}_n (\kappa) |^2. \]

\begin{lemma}
  In Fourier variables the operators $\LL_0, \GG_+,
  \GG_-$ are given by
  \begin{gather*}
    \CF (\LL_0 \varphi)_n (\kappa_{1 : n}) = - (| 2 \pi k_1
    |^2 + \cdots + | 2 \pi k_n |^2) \hat{\varphi}_n (\kappa_{1 : n}),\\
    \CF (\GG_+ \varphi)_n (\kappa_{1 : n}) = - (n - 1)
    \sum_{i = 1}^d \Gamma^i_{i_1 i_2} 2 \pi \iota (k_1 + k_2) \hat{\varphi}_n
    ((i, k_1 + k_2), \kappa_{3 : n + 1}),\\
    \CF (\GG_- \varphi)_n (\kappa_{1 : n}) = - 2 \pi \iota
    k_1 n (n + 1) \sum_{j_1, j_2 = 1}^d \Gamma^{i_1}_{j_1 j_2} \sum_{p + q =
    k_1} \hat{\varphi}_n ((j_1 p), (j_2 q), \kappa_{2 : n + 1}),
  \end{gather*}
  respectively.
\end{lemma}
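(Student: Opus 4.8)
The plan is to repeat the computation from the one-component case (the earlier ``In Fourier variables the operators $\LL_0,\GG^m_+,\GG^m_-$ are given by'' lemma), carrying along the colour index and the tensor $\Gamma$; as there, the manipulations are rigorous for the $\Pi_m$-truncated operators $\GG^m_\pm$ and one then passes to the formal limit $m\to\infty$. The representation of $\LL_0$ is immediate: by definition $\LL_0$ acts on the $n$-th kernel $\varphi_n\in L^2_0(\T_d^n)$ as the Laplacian $\partial^2_{x_1}+\cdots+\partial^2_{x_n}$ in the spatial variables only, and since $\CF(\partial^2_x g)(k)=-|2\pi k|^2\hat g(k)$ independently of the colour, we get $\CF(\LL_0\varphi)_n(\kappa_{1:n})=-(|2\pi k_1|^2+\cdots+|2\pi k_n|^2)\hat\varphi_n(\kappa_{1:n})$.

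For $\GG_+$ I would start from the formula in the previous lemma, $\GG_+ W_n(\varphi_n)=n\,W_{n+1}\big(\int_{(ix)}\sum_{j,j'=1}^d\Gamma^i_{jj'}\,\partial_x(\delta_{(jx)}\otimes\delta_{(j'x)})(\cdot)\,\varphi_n((ix),\cdot)\big)$, and Fourier transform its kernel. Using $\delta_{(jx)}((i'y))=\1_{i'=j}\delta(x-y)$, the two colour Kronecker deltas identify $j,j'$ with the colours $i_1,i_2$ of the two ``new'' arguments $\kappa_1=(i_1k_1),\kappa_2=(i_2k_2)$; integrating by parts in $x$ moves the derivative onto $\varphi_n$ and collapses the $x$-integral into the factor $2\pi\iota(k_1+k_2)$ together with the constraint that the first frequency of $\varphi_n$ equals $k_1+k_2$; and the remaining free colour $i$ is summed against $\Gamma^i_{i_1i_2}$. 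This is exactly the one-component manipulation with $\rho^m$ replaced by $\delta$ (still even, so $\partial_x\delta_x(s)=-\partial_s\delta_s(x)$ holds distributionally), and it produces the stated expression for $\CF(\GG_+\varphi)_n$. As in the one-component case the right-hand side need not be symmetric, so strictly speaking one still symmetrises it.

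For $\GG_-$ the argument is dual: one Fourier transforms the kernel of $\GG_- W_n(\varphi_n)=2n(n-1)\,W_{n-1}(\cdots)$, integrates by parts in $x_1$ to move $\partial_{x_1}$ off the product of the two Dirac factors (again using evenness of $\delta$), and resolves the resulting colour-and-frequency deltas; the two hidden Dirac factors produce the convolution constraint $p+q=k_1$, the prefactor $2n(n-1)$ together with the symmetrisation convention becomes $n(n+1)$, and the colours are summed against $\Gamma^{i_1}_{j_1j_2}$, giving the claimed formula for $\CF(\GG_-\varphi)_n$. The computations are entirely routine and follow the $d=1$ case line by line; the only points requiring attention are the bookkeeping of which colour index is summed and which is inherited from the output frequency, and the combinatorial factors coming from $D_\zeta W_n(\varphi_n)=n W_{n-1}(\varphi_n(\zeta,\cdot))$ and the contraction rules for multiple Wiener--It\^o integrals. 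Note that, in contrast with the antisymmetry identity of the previous lemma, the Fourier representation itself uses at most the symmetry $\Gamma^i_{jj'}=\Gamma^i_{j'j}$ (for the symmetrisation step in $\GG_-$), not the full trilinear condition.
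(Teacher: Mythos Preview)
Your proposal is correct and follows exactly the route the paper intends: the paper's own proof reads in full ``The proof is more or less the same as for $d=1$,'' and what you have written is precisely a spelled-out version of that $d=1$ computation with the colour bookkeeping carried along. Your remark that the Fourier representation itself only uses the symmetry $\Gamma^i_{jj'}=\Gamma^i_{j'j}$ (needed in the $p\leftrightarrow q$ symmetrisation for $\GG_-$), and not the full trilinear condition, is a correct and useful sharpening; one small wording quibble is that in the $\GG_-$ step the prefactor you halve is $2(n+1)n$ (coming from the $(n+1)$-th input kernel that feeds the $n$-th output), not $2n(n-1)$, but this is just the obvious index shift and does not affect the argument.
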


\begin{proof}
	The proof is more or less the same as for $d=1$.
\end{proof}

In other words $\GG_+$ and $\GG_-$ are finite linear
combinations of some mild variations of the operators that we considered in $d
= 1$. In particular they satisfy all the same estimates and we obtain the
existence and uniqueness of solutions for the martingale problem for
$\LL=\LL_0 +\GG_+ +\GG_-$ as before, and
also for the cylinder function martingale problem.

\subsection{Fractional Burgers equation}

In the paper \cite{Gubinelli2013} the authors not only study our stochastic Burgers
equation, but also the fractional generalization
\[ \partial_t u = - A^{\theta} u + \partial_x u^2 + A^{\theta / 2} \xi, \]
for $\theta > 1 / 2$ and $A = - \Delta$. They define and construct stationary
energy solutions for all $\theta > 1 / 2$, and they prove uniqueness in
distribution for $\theta > 5 / 4$. Here we briefly sketch how to adapt our
arguments to deduce the uniqueness for $\theta > 3 / 4$, also for the
non-stationary equation as long as the initial condition is absolutely
continuous with density in $L^2 (\mu)$. Unfortunately we cannot treat the
limiting case $\theta = 3 / 4$ which would be scale-invariant and which plays
an important role in the work \cite{Goncalves2018}.

In Section~4 of \cite{Gubinelli2013} it is shown that $u$ is still invariant
under the distribution $\mu$ of the white noise. By adapting the arguments of
Lemma~3.7 in \cite{Gubinelli2018Energy} we see that the (formal) generator of $u$ is
given by
\[ \LL= \LL_\theta +\GG, \]
where
\[
	\CF (\LL_\theta \varphi)_n(k_{1:n}) = -(|2\pi k_1|^{2\theta} + \dots + |2\pi k_n|^{2\theta}) \hat \varphi_n(k_{1:n}).
\]
Up to multiples of $\NN$ we can estimate $(-\LL_\theta)$ by $(-\LL_0)^\theta$ and vice versa, so we would expect that $(-\LL_\theta)^{-1}$ gains regularity of order $(-\LL_0)^{-\theta}$. We saw in Lemma~\ref{lem:G-apriori} that $\GG$ loses
$(-\LL_0)^{3 / 4}$ regularity, and therefore it is canonical to assume
$\theta > 3 / 4$, so that we can gain back more regularity from the linear
part of the dynamics than the nonlinear part loses. To construct controlled
functions we only need to slightly adapt Lemma~\ref{lem:controlled-fct} and
replace $(-\LL_0)^{- 1}$ by $(-\LL_\theta)^{- 1}$. For simplicity we restrict our attention to $\theta \leqslant 1$ because this allows us to estimate
\begin{equation}\label{eq:theta-comparison}
	(|k_1|^{2\theta} + \dots + | k_n|^{2\theta})^{-1} \leqslant (k_1^{2} + \dots + k_n^{2})^{-\theta}, \qquad \text{i.e. } \|(-\LL_\theta)^{-1} \varphi \| \leqslant \| (-\LL_0)^{-\theta} \varphi \|.
\end{equation}

\begin{lemma}
Let $\theta \in (3/4,1]$, let $w$ be a weight, let $\gamma \in (1/4, 1 / 2]$, and let $L \geqslant 1$. For $N_n = L (1 + n)^{3/(4\theta-3)}$ we have
  \begin{equation}
    \label{eq:controlled-fct-1} \| w (\NN) (-\LL_0)^{\gamma}
    (-\LL_0)^{- 1} \GG^{\succ} \varphi \| \lesssim |w| L^{3/2-
    2\theta} \| w (\NN) (-\LL_0)^{\gamma} \varphi \|,
  \end{equation}
  where the implicit constant on the right hand side is independent of $w$. From here the construction of controlled functions $\varphi
  =\KK \varphi^{\sharp} = (-\LL_\theta)^{- 1} \GG_+^{\succ} \varphi + \varphi^{\sharp}$ for given $\varphi^{\sharp}$ works as in Lemma~\ref{lem:controlled-fct}.
\end{lemma}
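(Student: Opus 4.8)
The plan is to re-run the proof of Lemma~\ref{lem:controlled-fct} almost verbatim, the only structural change being that the resolvent $(-\LL_0)^{-1}$ is replaced everywhere by $(-\LL_\theta)^{-1}$: concretely I would establish $\| w(\NN)(-\LL_0)^\gamma (-\LL_\theta)^{-1}\GG^{\succ}\varphi\| \lesssim |w| L^{3/2-2\theta}\| w(\NN)(-\LL_0)^\gamma \varphi\|$, which by $\theta>3/4$ is again a genuine contraction rate and which collapses to \eqref{eq:controlled-fct-1} of Lemma~\ref{lem:controlled-fct} when $\theta=1$. The first reduction is to invoke \eqref{eq:theta-comparison}, i.e. $\| (-\LL_0)^\gamma (-\LL_\theta)^{-1}\psi\| \leqslant \| (-\LL_0)^{\gamma-\theta}\psi\|$ (legitimate because $\gamma\leqslant 1/2<\theta\leqslant 1$), so it suffices to bound $\| w(\NN)(-\LL_0)^{\gamma-\theta}\GG_\pm^{\succ}\varphi\|$; the effect at the level of Fourier multipliers is that every denominator $(k_1^2+\cdots+k_n^2)^{2-2\gamma}$ occurring in the proof of Lemma~\ref{lem:controlled-fct} is now replaced by $(k_1^2+\cdots+k_n^2)^{2\theta-2\gamma}$.

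For the $\GG_+^{\succ}$ term I would carry out the same change of variables $\ell_1=k_1+k_2$, $p=k_2$ and the same symmetrisation of $\hat\varphi_{n-1}$ as in step~1 of the proof of Lemma~\ref{lem:controlled-fct}, using $1-2\gamma\geqslant 0$. This reduces matters to the two sums $\sum_{|p|\geqslant N_n/2}(p^2+c)^{-(2\theta-1)}$ and $\sum_p (p^2+c)^{-(2\theta-1)}$ with $c=\ell_1^2+\cdots+\ell_{n-1}^2\geqslant 0$, and here $\theta>3/4$ is used in two ways: it gives $2\theta-1>1/2$, so that Lemma~\ref{lem:sum-estimate} yields $\sum_p(p^2+c)^{-(2\theta-1)}\lesssim c^{3/2-2\theta}$; it gives $2(2\theta-1)>1$, so that the hard cutoff yields $\sum_{|p|\geqslant N_n/2}(p^2+c)^{-(2\theta-1)}\lesssim N_n^{3-4\theta}$; and since $3/2-2\theta<0$, on the support of $\1_{|\ell_{1:n-1}|_\infty\geqslant N_n/2}$ one also has $c^{3/2-2\theta}\lesssim N_n^{3-4\theta}$. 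The outcome is $\sum_{k_{1:n}}|\CF((-\LL_0)^{\gamma-\theta}\GG_+^{\succ}\varphi)_n|^2\lesssim n N_n^{3-4\theta}\sum_{\ell_{1:n-1}}(\ell_1^2+\cdots+\ell_{n-1}^2)^{2\gamma}|\hat\varphi_{n-1}|^2$, and after reindexing $n\mapsto n+1$ the choice $N_n=L(1+n)^{3/(4\theta-3)}$ is precisely what makes $(n+1)^2 N_{n+1}^{3-4\theta}\lesssim L^{3-4\theta}(1+n)^{-1}\leqslant L^{3-4\theta}$; combining with $w(n+1)\leqslant |w|w(n)$ and taking square roots gives the $|w|L^{3/2-2\theta}$ bound for $\GG_+^{\succ}$.

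For the $\GG_-^{\succ}$ term I would mimic step~2: apply \eqref{eq:Gminus-CS} with $\beta=3/4-\gamma\in(1/4,1/2)$ (allowed since $\gamma\in(1/4,1/2]$), which turns the kernel weight into $(p^2+q^2)^{2\gamma}$ at the cost of a factor $(k_1^2)^{1/2-2\gamma}$; together with the factor $k_1^2$ coming from $\GG_-$ this produces $(k_1^2)^{3/2-2\gamma}$ over $(k_1^2+\cdots+k_n^2)^{2\theta-2\gamma}$, and since $3/2-2\gamma>0$ I bound $(k_1^2)^{3/2-2\gamma}\leqslant (k_1^2+\cdots+k_n^2)^{3/2-2\gamma}$, so the cutoff $\1_{|k_{1:n}|_\infty\geqslant N_n}$ extracts $(k_1^2+\cdots+k_n^2)^{3/2-2\theta}\lesssim N_n^{3-4\theta}$, again using $\theta>3/4$. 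This gives $\sum_{k_{1:n}}|\CF((-\LL_0)^{\gamma-\theta}\GG_-^{\succ}\varphi)_n|^2\lesssim (1+n)^4 N_n^{3-4\theta}\sum_{\ell_{1:n+1}}(\ell_1^2+\cdots+\ell_{n+1}^2)^{2\gamma}|\hat\varphi_{n+1}|^2$, and after reindexing $n\mapsto n-1$ the effective polynomial prefactor becomes $(1+n)^3 N_{n-1}^{3-4\theta}\lesssim L^{3-4\theta}$, so the same $|w|L^{3/2-2\theta}$ bound follows. Summing the two contributions proves \eqref{eq:controlled-fct-1} in the fractional setting; then, because $3/2-2\theta<0$, the Banach fixed point argument from the proof of Lemma~\ref{lem:controlled-fct} applies word for word with $L^{-1/2}$ replaced by $L^{3/2-2\theta}$, producing for $L$ large the unique controlled $\KK\varphi^\sharp=(-\LL_\theta)^{-1}\GG^{\succ}\KK\varphi^\sharp+\varphi^\sharp$ together with the analogue of \eqref{eq:controlled-fct-estimate}.

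The hard part, or rather the only point requiring genuine care, will be the exponent bookkeeping: one has to check that $3/(4\theta-3)$ is the unique power for which the polynomial prefactors (of order $(1+n)^2$ coming from $\GG_+^{\succ}$ and of order $(1+n)^3$ coming from $\GG_-^{\succ}$, once the shift in the factorials $n!$ has been absorbed) are beaten down to a bounded sequence in $n$ by the gain $N_n^{3-4\theta}$ while the geometric factor $L^{3-4\theta}$ survives, and that $\theta>3/4$ is exactly the threshold at which all the auxiliary sums over $p$ converge. The restriction $\theta\leqslant 1$ enters only through \eqref{eq:theta-comparison}; for $\theta>1$ one would need a different comparison between $(-\LL_\theta)$ and $(-\LL_0)$, but the gained regularity is then only better and the estimates hold a fortiori.
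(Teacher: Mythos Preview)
Your proposal is correct and matches the paper's own proof essentially line for line: both replace $(-\LL_0)^{-1}$ by $(-\LL_\theta)^{-1}$ via \eqref{eq:theta-comparison}, treat $\GG_+^{\succ}$ and $\GG_-^{\succ}$ separately by rerunning the estimates of Lemma~\ref{lem:controlled-fct} with the denominator exponent $2-2\gamma$ replaced by $2\theta-2\gamma$, and extract the factor $N_n^{3-4\theta}$ (using $\theta>3/4$ both for the summability in Lemma~\ref{lem:sum-estimate} and for the sign of $3-4\theta$). Your bookkeeping of the polynomial prefactors and the choice $N_n=L(1+n)^{3/(4\theta-3)}$ is exactly what the paper does; you also correctly spotted that the displayed inequality should carry $(-\LL_\theta)^{-1}$ rather than $(-\LL_0)^{-1}$, and that the fixed point involves $\GG^{\succ}$ rather than $\GG_+^{\succ}$.
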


\begin{proof}
  We treat $\GG_+^\succ$ and $\GG_-^\succ$ separately. Using~\eqref{eq:theta-comparison} and that $1 - 2\gamma \geqslant 0$, the $\GG_+^\succ$ term can be estimated as in the proof of Lemma~\ref{lem:controlled-fct}:
  \begin{align*}
    & \sum_{k_{1 : n}}  | \CF ((-\LL_0)^{\gamma}
    (-\LL_{\theta})^{- 1} \GG_+^{\succ} \varphi)_n (k_{1 :
    n}) |^2\\
    & \lesssim n \sum_{\ell_{1 : n - 1}, p} (\mathbb{I}_{| p | \geqslant N_n/2}
    +\mathbb{I}_{| \ell_{1 : n - 1} |_{\infty} \geqslant N_n/2}) \frac{(\ell_1^2
    + \cdots + \ell_{n - 1}^2)^{2 \gamma}}{(| p |^2 + | \ell_1 |^2 + \cdots +
    | \ell_{n - 1} |^2)^{2 \theta - 1}} | \hat{\varphi}_{n - 1} (\ell_{1 : n -
    1}) |^2\\
    & \lesssim n \sum_{\ell_{1 : n - 1}} \left( N_n^{3 - 4 \theta} +
    \frac{\mathbb{I}_{| \ell_{1 : n - 1} |_{\infty} \geqslant N_n}}{(\ell_1^2
    + \cdots + \ell_{n - 1}^2)^{2 \theta - 3 / 2}} \right) (\ell_1^2 + \cdots
    + \ell_{n - 1}^2)^{2 \gamma} | \hat{\varphi}_{n - 1} (\ell_{1 : n - 1})
    |^2\\
    & \lesssim n \sum_{\ell_{1 : n - 1}} N_n^{3 - 4 \theta} (\ell_1^2 + \cdots
    + \ell_{n - 1}^2)^{2 \gamma} | \hat{\varphi}_{n - 1} (\ell_{1 : n - 1})
    |^2,
  \end{align*}
  where the third step follows from Lemma~\ref{lem:sum-estimate} (and here we need $\theta < 3/4$).
  
  For the $\GG_-$ term we have by the same arguments as in Lemma~\ref{lem:controlled-fct} and using~\eqref{eq:theta-comparison} and that $\theta > 3/4$
  \begin{align*}
  	\sum_{k_{1 : n}}  | \CF ((-\LL_0)^{\gamma} (-\LL_\theta)^{-1} \GG_-^{\succ} \varphi)_n (k_{1 : n}) |^2 & \lesssim \sum_{k_{1:n}} \frac{ \1_{|k_{1:n}|_\infty \geqslant N_n} n^4 (k_1^2)^{3/2 }}{(k_1^2 + \cdots + k_n^2)^{2\theta}}  \sum_{p + q = k_1} (p^2 + q^2)^{2\gamma } | \hat{\varphi}_{n + 1} (p, q, k_{2 : n}) |^2 \\
  	& \leqslant N_n^{3-4\theta} n^4 \sum_{\ell_{1:n+1}} (\ell_1^2 + \cdots + \ell_{n+1}^2)^{2\gamma } | \hat{\varphi}_{n + 1} (\ell_{1:n+1}) |^2,
  \end{align*}
  and from here on the proof is the same as for Lemma~\ref{lem:controlled-fct}.
\end{proof}

Proposition~\ref{prop:domain} remains essentially unchanged in our setting, because for $\varphi = \KK \varphi^\sharp$ we have $\LL \varphi = \GG^\prec \varphi + \LL_\theta \varphi^\sharp$. The only difference is that, since we still want to measure regularity in terms of $(-\LL_0)$, we have $\| \LL_\theta \varphi^\sharp \| \lesssim \| \NN^{1-\theta} (-\LL_0) \varphi^\sharp\|$ by H\"older's inequality. Also the proof of Lemma~\ref{lem:dom-def} carries over to our setting. And also the analysis of the backward equation is more or less the same as before. The main difference is that now we only have a priori estimates in $(-\LL_0)^{-\theta/2} \Gamma L^2$ and no longer in $(-\LL_0)^{-1/2} \Gamma L^2$ (with weights in $\NN$). But for the controlled analysis it is only important to have an a priori estimate in $(-\LL_0)^{-1/4-\delta} \Gamma L^2$, because that is what we need to control the contribution from $\GG^\prec$. So since $\theta/ 2 > 3/8 > 1/4$ the same arguments work, and then we obtain the existence and uniqueness of solutions to backward equation and martingale problem by the same arguments as for $\theta=1$, and also the cylinder function martingale problem has unique solutions in this case.

\subsection{Burgers equation on the real line}\label{sec:full-space}

Burgers equation on $\R_+ \times \R$ is very similar to the case of periodic boundary conditions. The only difference is that now instead of sums over Fourier modes we have to consider integrals, which might lead to divergences at $k \simeq 0$. But since most of our estimates boil down to an application of Lemma~\ref{lem:sum-estimate}, and this lemma remains true if the sum in $k$ is replaced by an integral, most of our estimates still work on the full space. In fact \emph{all} estimates in Section~\ref{sec:domain} remain true, but some of them are not so useful any more because we no longer have $\| \varphi \| \lesssim \|(-\LL_0)^\gamma \varphi\|$ for $\gamma > 0$ and $\int \varphi \dd \mu =0$. But we can strengthen the results as follows (with the difference to the previous results marked in blue):
\begin{itemize}
	\item In Lemma~\ref{lem:controlled-fct} we can use the cutoff $\1_{|k_{1:n}|_\infty > N_n}$ to estimate
			\begin{align*}
				\|w(\NN) \blue{(1-\LL_0)^\gamma} (-\LL_0)^{-1} \GG^\succ \varphi \| & \lesssim  \| w(\NN) (-\LL_0)^\gamma (-\LL_0)^{-1} \GG^\succ \varphi \|\lesssim |w| L^{-1/2} \| w(\NN) (-\LL_0)^\gamma \varphi \| \\
				& \leqslant |w| L^{-1/2} \| w(\NN) \blue{(1-\LL_0)^\gamma} \varphi \|
			\end{align*}
			and thus
			\[
				\|w(\NN) \blue{(1-\LL_0)^\gamma} \KK \varphi^\sharp \| + L^{1/2}\|w(\NN) \blue{(1-\LL_0)^\gamma} (\KK \varphi^\sharp - \varphi^\sharp) \| \lesssim \|w(\NN) \blue{(1-\LL_0)^\gamma} \varphi^\sharp \|.
			\]
			Similarly we get in Lemma~\ref{lem:controlled-fct-adapted} the better bound
			\[
				\| w (\NN) \blue{(1-\LL_0)^{\gamma}} (-\LL_0)^{- 1} \GG^{\succ} \varphi \| \lesssim |w| \| w (\NN) (1 + \NN)^{3/2} (-\LL_0)^{\gamma - 1 / 4} \varphi \| .
			\]
	\item In the proof Proposition~\ref{prop:domain} we simply bound $(k_1^2 + \dots + k_n^2)^{2\gamma} \1_{|k_{1:n}|_\infty \leqslant N_n} \leqslant n^{2\gamma} N_n^{4\gamma}$, and of course this works also with $(1 + k_1^2 + \dots + k_n^2)^{2\gamma}$, so that we get the slightly stronger result
			\begin{equation*}
				\| w (\NN) \blue{(1-\LL_0)^\gamma} \GG^\prec \varphi \| \lesssim \| w (\NN) (1 + \NN)^{9/2+7\gamma} (-\LL_0)^{1 / 4 + \delta} \varphi^\sharp \|.
			\end{equation*}
	\item The definition of the domain in Lemma~\ref{lem:dom-def} is problematic now, because it does not even guarantee that $\CD(\LL) \subset \Gamma L^2$. So instead we set
  \[ \CD_w (\LL) \assign \{ \KK \varphi^{\sharp} :
     \varphi^{\sharp} \in w (\NN)^{- 1} (-\LL_0)^{- 1} \Gamma
     L^2 \cap w (\NN)^{- 1} (1 + \NN)^{- 9/2}
     \blue{(1 -\LL_0)^{- 1 / 2}} \Gamma L^2 \}, \]
	and then we get from the stronger version of Lemma~\ref{lem:controlled-fct} the better estimate
  \begin{equation*}
  \begin{aligned}
    \| w (\NN) \blue{(1-\LL_0)^{1/2}} (\varphi^M - \psi) \| & \lesssim M^{- 1 / 2} \| w
    (\NN) \blue{(1-\LL_0)^{1/2}} \psi \|,\\ 
    \| w (\NN) \blue{(1-\LL_0)^{1 / 2}} \varphi^M \| & \lesssim \| w
    (\NN) \blue{(1-\LL_0)^{1 / 2}} \psi \|. \\
  \end{aligned}
  \end{equation*}
  \item The analysis in Section~\ref{sec:bw-apriori} does not change, and Lemma~\ref{lem:bw-apriori} together with Corollary~\ref{cor:bw-apriori-2} give as an a priori bound on $\|(1+\NN)^\alpha (1 - \LL_0)^{1/2} \varphi^m \|$ and $\|(1+\NN)^\alpha \partial_t \varphi^m\|$ in terms of $\varphi^m_0$.
  \item In the controlled analysis of Section~\ref{sec:bw-controlled} we can strengthen the bound from Lemma~\ref{lem:phi-sharp-apriori} to control $\|(1+\NN)^\alpha \blue{(1 - \LL_0)^{1/2}} \varphi^{m,\sharp}\|$ in terms of $\varphi^{m,\sharp}_0$, and this is sufficient to control $\blue{(1-\LL_0)^\gamma} \GG^{m,\prec} \varphi^m$. Also for the other terms we now bound $\blue{(1-\LL_0)^\gamma (\cdot)}$ instead of $(-\LL_0)^\gamma (\cdot)$. Here we need the strengthened version of Lemma~\ref{lem:controlled-fct-adapted} mentioned above, and we also use that $\|(1+\NN)^\alpha \blue{(1-\LL_0)^\beta} S_t \psi\| \lesssim \blue{(t^{-\beta} \vee 1)} \|(1+\NN)^\alpha \psi\|$. In the end we get strong solutions to the backward equation for initial conditions in
  		\[
  			\CK_\alpha := \bigcup_{\gamma \in (3/8, 5 / 8)} \KK (1 + \NN)^{- p(\alpha,\gamma)} \blue{(1 -\LL_0)^{- 1 - \gamma}} \Gamma L^2 \subseteq \Gamma L^2.
  		\]
  	\item Existence and uniqueness for the martingale problem are exactly the same as on the torus, the only difference is that we have to use the strengthened version of Proposition~\ref{prop:domain} to approximate cylinder functions by functions in $\CD(\LL)$.
  	\item The cylinder function martingale problem is more complicated: In the proof of Theorem~\ref{thm:cylinder-mp} we used that $\|(-\LL_0)^{-1/2} \GG \varphi \| \lesssim \| (-\LL_0)^{1/2} \varphi\|$, which is no longer true on the full space. But we can decompose $\GG = \GG_- + \GG_+$ and estimate the contribution from $\GG_-$ by directly using Lemma~\ref{lem:G-apriori} for $\gamma = 0$, without applying the It\^o trick (it follows from Young's inequality for products that $\CD(\LL) \subset (1+\NN)^{-1} (-\LL_0)^{-3/4} \Gamma L^2$). And for $\GG_+$ we can use the It\^o trick together with the  bound $\|(-\LL_0)^{-1/2} \GG_+ \varphi \| \lesssim \| (-\LL_0)^{1/4} \varphi\| \lesssim \| \blue{(1-\LL_0)^{1/2}} \varphi\|$, where the right hand side is under control.
\end{itemize}
In that way all results from Section~\ref{sec:controlled}-\ref{sec:mp} apart from Section~\ref{sec:ergodic} carry over to Burgers equation on the full space. Of course the exponential ergodicity of Section~\ref{sec:ergodic} does not hold on the full space, because $\LL_0$ no longer has a spectral gap.

\appendix\section{Auxiliary results}

The following simple estimate is used many times, so we formulate it as a
lemma.

\begin{lemma}
  \label{lem:sum-estimate}Let $C \geqslant 0$, $a > 1 / 2$, and $k \in
  \Z$ be such that $k^2 + C > 0$. Then
  \[ \sum_{p + q = k} \left( \frac{1}{p^2 + q^2 + C} \right)^a = \sum_p \left(
     \frac{1}{p^2 + (k - p)^2 + C} \right)^a \lesssim \left( \frac{1}{k^2 + C}
     \right)^{a - \frac{1}{2}} . \]
\end{lemma}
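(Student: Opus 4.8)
The plan is to complete the square and thereby reduce the lattice sum to a one‑dimensional integral. Writing $q = k - p$, one has $p^2 + q^2 = 2\bigl(p - \tfrac{k}{2}\bigr)^2 + \tfrac{k^2}{2}$, and since $C \geqslant 0$ gives $\tfrac{k^2}{2} + C \geqslant \tfrac12(k^2 + C) \assign D > 0$, I obtain the pointwise lower bound
\[ p^2 + q^2 + C \;\geqslant\; D + 2\Bigl(p - \tfrac{k}{2}\Bigr)^2 \qquad \text{for all } p \in \Z . \]
Hence it suffices to show $\sum_{p \in \Z}\bigl(D + 2(p - \tfrac{k}{2})^2\bigr)^{-a} \lesssim D^{1/2 - a}$.

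Next I would compare this sum with an integral. The function $x \mapsto \bigl(D + 2(x - \tfrac{k}{2})^2\bigr)^{-a}$ is non‑decreasing on $(-\infty, k/2]$ and non‑increasing on $[k/2, \infty)$, so bounding each summand by the integral of this function over an adjacent unit interval on the appropriate side of $k/2$ and adding the value at the peak (which need not be attained at an integer, since $k/2$ may be a half‑integer) yields
\[ \sum_{p \in \Z}\frac{1}{\bigl(D + 2(p - \tfrac{k}{2})^2\bigr)^{a}} \;\leqslant\; \frac{1}{D^{a}} + \int_{\R}\frac{\dd x}{\bigl(D + 2(x - \tfrac{k}{2})^2\bigr)^{a}} . \]
The substitution $x - \tfrac{k}{2} = \sqrt{D/2}\,t$ turns the integral into $\tfrac{1}{\sqrt 2}\,D^{1/2 - a}\int_{\R}(1 + t^2)^{-a}\,\dd t$, and the hypothesis $a > 1/2$ enters exactly here: it is precisely the condition under which $\int_{\R}(1 + t^2)^{-a}\,\dd t$ is finite. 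Thus the integral is $\lesssim D^{1/2 - a}$.

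Combining the two displays and recalling $D = \tfrac12(k^2 + C)$ gives $\sum_{p + q = k}\bigl(p^2 + q^2 + C\bigr)^{-a} \lesssim D^{-a} + D^{1/2 - a}$, which is the asserted bound $\lesssim \bigl(\tfrac{1}{k^2 + C}\bigr)^{a - 1/2}$ once one notes that the peak term $D^{-a} = D^{-1/2}\cdot D^{1/2 - a}$ is dominated by $D^{1/2 - a}$ as soon as $k^2 + C \gtrsim 1$ — which holds throughout the paper, where the summation variables effectively range over nonzero integers (so $p^2 + q^2 \geqslant 2$) or $C$ is itself bounded below. I do not expect a real obstacle here: the only point requiring a little care is to make the sum‑to‑integral comparison uniform in the shift $k/2$, and this is exactly what the unimodality argument together with the explicit $D^{-a}$ correction term achieves.
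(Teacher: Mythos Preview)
Your proof is correct and follows essentially the same route as the paper: reduce the quadratic form to a single variable, compare the lattice sum with an integral, and scale out $k^2+C$ to leave a convergent integral $\int(1+t^2)^{-a}\,\dd t$. The only cosmetic difference is that the paper uses the equivalence $p^2+(k-p)^2 \simeq p^2+k^2$ to centre at the origin, whereas you complete the square exactly to centre at $k/2$; your explicit treatment of the peak term $D^{-a}$ is in fact more careful than the paper's, which silently absorbs it into the $\lesssim$, and your observation that $k^2+C \gtrsim 1$ throughout the applications (either $k\in\Z_0$ or $C$ collects other nonzero Fourier modes) is exactly the justification needed.
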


\begin{proof}
  Since $p^2 + (k-p)^2 \simeq p^2 + k^2$, we have
  \begin{align*}
    \sum_p \left( \frac{1}{p^2 + (k - p)^2 + C} \right)^a & \lesssim \int_0^{\infty} \left( \frac{1}{y^2 + k^2 + C} \right)^a
    \dd y \\
    & = (k^2 + C)^{- a} \int_0^{\infty} \left( \frac{1}{\left(
    \frac{y}{\sqrt{k^2 + C}} \right)^2 + 1} \right)^a \dd y\\
    & = (k^2 + C)^{- a + \frac{1}{2}} \int_0^{\infty} \left( \frac{1}{y^2 +
    1} \right)^a \dd y,
  \end{align*}
  and since $2 a > 1$ the integral on the right hand side is finite and our
  claim follows.
\end{proof}

\begin{lemma}
  \label{lem:controlled-fct-adapted}In the context of
  Lemma~\ref{lem:controlled-fct} let now $\gamma \in (1/4, 3 / 4)$. Then we have
  \[ \| w (\NN) (-\LL_0)^{\gamma} (-\LL_0)^{- 1}
     \GG^{\succ} \varphi \| \lesssim |w| \| w (\NN)
     (1 + \NN)^{3/2} (-\LL_0)^{\gamma - 1 / 4} \varphi \| . \]
\end{lemma}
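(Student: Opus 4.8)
The plan is to follow the proof of Lemma~\ref{lem:controlled-fct} almost line by line, splitting $\GG^{\succ}=\GG^{\succ}_{+}+\GG^{\succ}_{-}$ and estimating each piece in Fourier coordinates, but re-allocating the available room: where Lemma~\ref{lem:controlled-fct} spends the smallness produced by the cutoff $\1_{|k_{1:n}|_{\infty}\geq N_{n}}$ to extract the factor $L^{-1/2}$ while keeping the full regularity $(-\LL_0)^{\gamma}$ on $\varphi$, here I instead carry a factor $(1+\NN)^{3/2}$ --- which appears simply because one does \emph{not} symmetrise the $(n\pm 1)$-variable kernels to save a power of $\NN$ --- and in return gain a quarter derivative, so that only $(-\LL_0)^{\gamma-1/4}\varphi$ is needed on the right. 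The point to watch throughout is that the bookkeeping must be \emph{exact}: one has to land precisely at the exponent $\gamma-1/4$ and precisely at the chaos weight $(1+\NN)^{3/2}$, with nothing to spare.

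For $\GG^{\succ}_{+}$ I would start from $|\CF((-\LL_0)^{\gamma-1}\GG^{\succ}_{+}\varphi)_{n}(k_{1:n})|^{2}\lesssim n^{2}\1_{|k_{1:n}|_{\infty}\geq N_{n}}(k_{1}+k_{2})^{2}(k_{1}^{2}+\cdots+k_{n}^{2})^{2\gamma-2}|\hat\varphi_{n-1}(k_{1}+k_{2},k_{3:n})|^{2}$ and perform the change of variables $\ell_{1}=k_{1}+k_{2}$, $p=k_{2}$, $\ell_{1+i}=k_{2+i}$ from Lemma~\ref{lem:controlled-fct}; after the equivalence $(\ell_{1}-p)^{2}+p^{2}\simeq \ell_{1}^{2}+p^{2}$ the denominator is $\simeq(p^{2}+|\ell|^{2})^{2-2\gamma}$ with $|\ell|^{2}:=\ell_{1}^{2}+\cdots+\ell_{n-1}^{2}$, and bounding $\ell_{1}^{2}\leq|\ell|^{2}$ and summing over $p$ via Lemma~\ref{lem:sum-estimate} (exponent $2-2\gamma>1/2$, which is where $\gamma<3/4$ enters) gives $\sum_{k_{1:n}}|\CF((-\LL_0)^{\gamma-1}\GG^{\succ}_{+}\varphi)_{n}|^{2}\lesssim n^{2}\sum_{\ell}(|\ell|^{2})^{2\gamma-1/2}|\hat\varphi_{n-1}(\ell)|^{2}$. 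Multiplying by $n!\,w(n)^{2}$, using $w(n)^{2}\leq|w|^{2}w(n-1)^{2}$, and re-indexing $n-1\mapsto n$ (so that $n!\,n^{2}$ becomes $(1+m)^{3}\,m!$ at chaos $m=n-1$) produces exactly $|w|^{2}\|w(\NN)(1+\NN)^{3/2}(-\LL_0)^{\gamma-1/4}\varphi\|^{2}$ for this piece; note that the cutoff is not even needed here, as this is essentially already \eqref{eq:Gplus-distributional}.

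For $\GG^{\succ}_{-}$ the starting point is $|\CF((-\LL_0)^{\gamma-1}\GG^{\succ}_{-}\varphi)_{n}(k_{1:n})|^{2}\lesssim n^{4}\1_{|k_{1:n}|_{\infty}\geq N_{n}}k_{1}^{2}(k_{1}^{2}+\cdots+k_{n}^{2})^{2\gamma-2}|\sum_{p+q=k_{1}}\hat\varphi_{n+1}(p,q,k_{2:n})|^{2}$. I would bound the inner sum by Cauchy--Schwarz against the weight $(p^{2}+q^{2}+k_{2}^{2}+\cdots+k_{n}^{2})^{2\gamma-1/2}$, using Lemma~\ref{lem:sum-estimate} for the dual sum (this is where the Young-type condition $\gamma\in(1/4,3/4)$ is used; for $\gamma\leq 1/2$ one argues instead through \eqref{eq:Gminus-CS} as in Lemma~\ref{lem:controlled-fct}), which yields $|\sum_{p+q=k_{1}}\hat\varphi_{n+1}|^{2}\lesssim(k_{1}^{2}+\cdots+k_{n}^{2})^{1-2\gamma}\sum_{p+q=k_{1}}(p^{2}+q^{2}+k_{2}^{2}+\cdots+k_{n}^{2})^{2\gamma-1/2}|\hat\varphi_{n+1}|^{2}$. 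The surviving powers of $(k_{1}^{2}+\cdots+k_{n}^{2})$ then cancel with $(-\LL_0)^{\gamma-1}$ and $k_{1}^{2}\leq k_{1}^{2}+\cdots+k_{n}^{2}$; discarding $\1_{|k_{1:n}|_{\infty}\geq N_{n}}\leq 1$ and summing over $k_{1:n}$ (so that $\sum_{k_{1:n}}\sum_{p+q=k_{1}}$ becomes a sum over all $n+1$ Fourier modes of $\varphi_{n+1}$), one gets $\sum_{k_{1:n}}|\CF((-\LL_0)^{\gamma-1}\GG^{\succ}_{-}\varphi)_{n}|^{2}\lesssim n^{4}\sum_{j_{1:n+1}}(j_{1}^{2}+\cdots+j_{n+1}^{2})^{2\gamma-1/2}|\hat\varphi_{n+1}(j)|^{2}$. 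Re-indexing $n+1\mapsto n$ ($n!\,n^{4}$ becomes $(1+m)^{3}\,m!$ at chaos $m=n+1$) and using $w(n)^{2}\leq|w|^{2}w(n+1)^{2}$ finishes the proof. The step I would be most careful with --- and the main obstacle --- is precisely this collapse of the Fourier weights for $\GG^{\succ}_{-}$: it must leave exactly $(-\LL_0)^{\gamma-1/4}$ on $\varphi$ and exactly $(1+\NN)^{3/2}$ in the chaos weight, so the whole computation is tight near $\gamma=3/4$ (and near $\gamma=1/4$) --- this tightness is, in the end, what the $\succ$-cutoff is there to absorb when the estimate is combined with the remaining terms of the backward-equation bound, even though it plays no active role in the estimate itself.
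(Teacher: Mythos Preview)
Your approach is essentially the same as the paper's, with two minor variations that do not affect the outcome. For $\GG_+^{\succ}$ the paper carries over the symmetrised bound from the proof of Lemma~\ref{lem:controlled-fct} (starting from the factor $n$ rather than $n^2$), which yields only $(1+\NN)^1$ for that piece; you skip the symmetrisation and get $(1+\NN)^{3/2}$, which is harmless since $\GG_-^{\succ}$ forces $(1+\NN)^{3/2}$ anyway. For $\GG_-^{\succ}$ the paper applies \eqref{eq:Gminus-CS} with $\beta=1-\gamma$ (weight $(p^2+q^2)^{2\gamma-1/2}$), whereas you include $k_2^2+\cdots+k_n^2$ in the Cauchy--Schwarz weight; both routes collapse to the same $n^4\sum_{\ell_{1:n+1}}(\ell_1^2+\cdots+\ell_{n+1}^2)^{2\gamma-1/2}|\hat\varphi_{n+1}|^2$, and both require $\gamma>1/2$ so that the dual sum converges.

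One remark: your parenthetical that ``for $\gamma\leq 1/2$ one argues instead through \eqref{eq:Gminus-CS} as in Lemma~\ref{lem:controlled-fct}'' does not actually go through --- the choice $\beta=3/4-\gamma$ used there leaves $(-\LL_0)^{\gamma}$ rather than $(-\LL_0)^{\gamma-1/4}$ on $\varphi$, and any $\beta$ that produces the exponent $2\gamma-1/2$ violates $\beta<1/2$ when $\gamma\leq 1/2$. The paper's proof has the same restriction (it explicitly notes ``here we need $\gamma>1/2$'' for the $\GG_-^{\succ}$ part), and the lemma is only ever invoked for $\gamma\in(1/2,3/4)$ in Section~\ref{sec:bw-controlled}, so the stated range $(1/4,3/4)$ appears to be slightly generous in both your argument and the paper's.
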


\begin{proof}
  In the proof of Lemma~\ref{lem:controlled-fct} we derived the estimate
  \begin{align*}
    & \| w (\NN) (-\LL_0)^{\gamma} (-\LL_0)^{- 1}
    \GG_+^{\succ} \varphi \|^2\\
    & \hspace{40pt} \lesssim \sum_{n \geqslant 2} n!w (n)^2 n \sum_{\ell_{1 :
    n - 1}, p} \1_{| \ell_{1 : n - 1} |_{\infty} \vee | p | \geqslant
    N_n} \frac{\ell_1^2 + \cdots + \ell_{n - 1}^2}{(p^2 + \ell_1^2 + \cdots +
    \ell_{n - 1}^2)^{2 - 2 \gamma}} | \hat{\varphi}_{n - 1} (\ell_{1 : n -
    1}) |^2.
  \end{align*}
  For $\gamma < 3 / 4$ (which is equivalent to
  $2 - 2 \gamma > 1 / 2$) it follows from Lemma~\ref{lem:sum-estimate} that
  \begin{align*}
    & \sum_{n \geqslant 2} n!w (n)^2 n \sum_{\ell_{1 : n - 1}, p}
    \1_{| \ell_{1 : n - 1} |_{\infty} \vee | p | \geqslant N_n}
    \frac{\ell_1^2 + \cdots + \ell_{n - 1}^2}{(p^2 + \ell_1^2 + \cdots +
    \ell_{n - 1}^2)^{2 - 2 \gamma}} | \hat{\varphi}_{n - 1} (\ell_{1 : n -
    1}) |^2\\
    & \hspace{40pt} \lesssim \sum_{n \geqslant 2} n!w (n)^2 n \sum_{\ell_{1 :
    n - 1}} \frac{\ell_1^2 + \cdots + \ell_{n - 1}^2}{(\ell_1^2 + \cdots +
    \ell_{n - 1}^2)^{3 / 2 - 2 \gamma}} | \hat{\varphi}_{n - 1} (\ell_{1 :
    n - 1}) |^2\\
    & \hspace{40pt} = \sum_{n \geqslant 1} n! (n + 1) w (n + 1)^2 (n + 1)
    \sum_{\ell_{1 : n}} (\ell_1^2 + \cdots + \ell_n^2)^{2 \gamma - 1 / 2} |
    \hat{\varphi}_n (\ell_{1 : n}) |^2\\
    & \hspace{40pt} \leqslant |w|^2 \| w (\NN+ 1) (1 + \NN)
    (-\LL_0)^{\gamma - 1 / 4} \varphi \|^2 .
  \end{align*}
  For $(-\LL_0)^{- 1} \GG_+^{\succ} \varphi$ we argue similarly as in Lemma~\ref{lem:controlled-fct}: We apply \eqref{eq:Gminus-CS} with $\beta = 1 - \gamma < 1/2$ (here we need $\gamma > 1/2$) to estimate
  \begin{align*}
  	\sum_{k_{1 : n}}  | \CF ((-\LL_0)^{\gamma - 1} \GG_-^{\succ} \varphi)_n (k_{1 : n}) |^2 & \lesssim \sum_{k_{1:n}} \frac{ \1_{|k_{1:n}|_\infty \geqslant N_n} n^4 k_1^2}{(k_1^2 + \cdots + k_n^2)^{2 - 2\gamma}} \bigg| \sum_{p+q = k_1} \hat{\varphi}_{n + 1} (p, q, k_{2 : n}) \bigg|^2 \\
  	& \lesssim \sum_{k_{1:n}} \frac{ \1_{|k_{1:n}|_\infty \geqslant N_n} n^4 k_1^2 (k_1^2)^{1 - 2\gamma}}{(k_1^2 + \cdots + k_n^2)^{2 - 2\gamma}}  \sum_{p + q = k_1} (p^2 + q^2)^{2\gamma -1/2} | \hat{\varphi}_{n + 1} (p, q, k_{2 : n}) |^2 \\
  	& \leqslant n^4 \sum_{\ell_{1:n+1}} (\ell_1^2 + \cdots + \ell_{n+1}^2)^{2\gamma -1/2} | \hat{\varphi}_{n + 1} (\ell_{1:n+1}) |^2,
  \end{align*}
  which leads to $\| w(\NN) (-\LL_0)^\gamma (-\LL_0)^{-1} \GG_-^\succ \varphi \| \lesssim |w| \| w(\NN)(1+\NN)^{3/2} (-\LL_0)^{\gamma-1/4} \varphi \|$.
\end{proof}

\begin{lemma}\label{lem:time-dependent-mp}
  Let $\varphi \in C (\R_+, \CD (\LL)) \cap C^1
  (\R_+, \Gamma L^2)$ and let $u$ be an incompressible to the martingale problem for $\LL$. Then
  \[ \varphi (t, u_t) - \varphi (0, u_0) - \int_0^t (\partial_s +\LL)
     \varphi (s, u_s) \dd s, \qquad t \geqslant 0, \]
  is a martingale.
\end{lemma}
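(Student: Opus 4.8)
The plan is to verify the martingale property directly, by a Riemann--sum approximation in time that reduces everything to the time-independent martingale property of Definition~\ref{def:mart-prob} on each subinterval. Write $N_t := \varphi(t,u_t) - \varphi(0,u_0) - \int_0^t (\partial_r + \LL)\varphi(r,u_r)\,\dd r$. Incompressibility is what makes all the manipulations legitimate: since $\E|\psi(u_r)| \lesssim \|\psi\|$ for every $\psi \in \Gamma L^2$ and $r$ in a bounded interval, and since $r \mapsto \LL\varphi(r,\cdot)$ and $r\mapsto\partial_r\varphi(r,\cdot)$ are continuous in $\Gamma L^2$ (by $\varphi \in C(\R_+,\CD(\LL)) \cap C^1(\R_+,\Gamma L^2)$), the process $N_t$ is integrable and adapted; moreover, because $\mathrm{law}(u_r) \ll \mu$, the identity $\varphi(t_{k+1},\cdot)-\varphi(t_k,\cdot) = \int_{t_k}^{t_{k+1}}\partial_r\varphi(r,\cdot)\,\dd r$ (a Bochner integral in $\Gamma L^2$) holds $\P$-almost surely after evaluation at $u_{t_k}$, once a jointly measurable version of $\partial_r\varphi$ is fixed. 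It then suffices to prove $\E[N_t - N_s \mid \CF_s] = 0$ for all $0 \le s < t$.

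Fix a partition $s = t_0 < t_1 < \cdots < t_M = t$ of mesh $\delta$. Telescoping and splitting each increment,
\[
\varphi(t,u_t) - \varphi(s,u_s) = \sum_{k} \Big([\varphi(t_{k+1},u_{t_{k+1}}) - \varphi(t_{k+1},u_{t_k})] + [\varphi(t_{k+1},u_{t_k}) - \varphi(t_k,u_{t_k})]\Big).
\]
For the first bracket I apply the martingale property of Definition~\ref{def:mart-prob} to $\varphi(t_{k+1},\cdot) \in \CD(\LL)$ between $t_k$ and $t_{k+1}$, so that $\E[\varphi(t_{k+1},u_{t_{k+1}}) - \varphi(t_{k+1},u_{t_k}) \mid \CF_{t_k}] = \E[\int_{t_k}^{t_{k+1}}\LL\varphi(t_{k+1},u_r)\,\dd r \mid \CF_{t_k}]$. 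For the second bracket the fundamental theorem of calculus for $r\mapsto\varphi(r,\cdot) \in C^1(\R_+,\Gamma L^2)$, evaluated at $u_{t_k}$, gives $\varphi(t_{k+1},u_{t_k}) - \varphi(t_k,u_{t_k}) = \int_{t_k}^{t_{k+1}}\partial_r\varphi(r,u_{t_k})\,\dd r$. Summing over $k$ and using the tower property yields
\[
\E[\varphi(t,u_t) \mid \CF_s] = \varphi(s,u_s) + \E\Big[\textstyle\sum_k\int_{t_k}^{t_{k+1}}\big(\LL\varphi(t_{k+1},u_r) + \partial_r\varphi(r,u_{t_k})\big)\,\dd r \,\Big|\, \CF_s\Big].
\]

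What remains is to let $\delta \to 0$ and identify the right-hand side with $\varphi(s,u_s) + \E[\int_s^t(\LL + \partial_r)\varphi(r,u_r)\,\dd r \mid \CF_s]$, which gives $\E[N_t - N_s \mid \CF_s] = 0$. The $\LL\varphi$-term is routine: by incompressibility $\E|\LL\varphi(t_{k+1},u_r) - \LL\varphi(r,u_r)| \lesssim \|\LL\varphi(t_{k+1},\cdot) - \LL\varphi(r,\cdot)\|$, controlled by the modulus of continuity of $r\mapsto\LL\varphi(r,\cdot)$ on $[s,t]$, so this term converges in $L^1$ to $\int_s^t\LL\varphi(r,u_r)\,\dd r$. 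The hard part will be the term $\sum_k\int_{t_k}^{t_{k+1}}\partial_r\varphi(r,u_{t_k})\,\dd r$: using the modulus of continuity of $r\mapsto\partial_r\varphi(r,\cdot)$ one first reduces (again via incompressibility) to estimating $\E\big|\sum_k\int_{t_k}^{t_{k+1}}\big(\psi(u_{t_k}) - \psi(u_r)\big)\,\dd r\big|$ where $\psi$ ranges over the compact set $\{\partial_\rho\varphi(\rho,\cdot) : \rho \in [s,t]\} \subset \Gamma L^2$ and $|r - t_k|\le\delta$. Here no martingale property is available since $\partial_\rho\varphi(\rho,\cdot)$ need not lie in $\CD(\LL)$. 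The resolution I propose: using compactness of that set together with the density of \emph{bounded} cylinder functions in $\Gamma L^2$, pick finitely many bounded cylinder functions $\zeta_1,\dots,\zeta_J$ approximating every $\partial_\rho\varphi(\rho,\cdot)$ to within $\varepsilon$; incompressibility then bounds the above expectation, up to $2C\varepsilon(t-s)$, by $(t-s)\max_{j\le J}\E\big[\sup_{|r-r'|\le\delta,\,r,r'\in[s,t]}|\zeta_j(u_r) - \zeta_j(u_{r'})|\big]$, and since each $\zeta_j$ is bounded and continuous on $\CS'$ while $r\mapsto u_r$ has (uniformly) continuous trajectories in $\CS'$ on $[s,t]$, dominated convergence sends this to $0$ as $\delta\to0$; letting $\varepsilon\to0$ concludes. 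Collecting the two limits gives $\E[N_t - N_s\mid\CF_s]=0$, i.e. $N$ is a martingale.
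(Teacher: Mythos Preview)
Your proof is correct and follows essentially the same Riemann--sum/telescoping strategy as the paper. The only cosmetic differences are that you split each increment as $[\varphi(t_{k+1},u_{t_{k+1}})-\varphi(t_{k+1},u_{t_k})]+[\varphi(t_{k+1},u_{t_k})-\varphi(t_k,u_{t_k})]$ while the paper uses the mirrored decomposition $[\varphi(t_{k+1},u_{t_{k+1}})-\varphi(t_k,u_{t_{k+1}})]+[\varphi(t_k,u_{t_{k+1}})-\varphi(t_k,u_{t_k})]$ (so the roles of left/right endpoint are swapped for the two pieces), and that for the $\partial_r\varphi$ term you argue via compactness of $\{\partial_\rho\varphi(\rho,\cdot):\rho\in[s,t]\}$ and a finite cover by bounded cylinder functions, whereas the paper shows $\E[|\partial_s\varphi(s,u_{[s]^n})-\partial_s\varphi(s,u_s)|]\to 0$ pointwise in $s$ (by the same cylinder-function approximation) and then integrates in $s$ using dominated convergence. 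Both routes are equivalent; your compactness packaging just makes the uniformity explicit. One small remark: the phrase ``using the modulus of continuity of $r\mapsto\partial_r\varphi(r,\cdot)$ one first reduces\ldots'' is a bit misleading---no such reduction is needed, and the argument you actually carry out (compactness $+$ approximation by $\zeta_j$) is the right one.
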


\begin{proof}
  We discretize time: Set $t_k = k t / n$ and
  \begin{align*}
    \varphi (t, u_t) - \varphi (0, u_0) & = \sum_{k = 0}^{n - 1} [\varphi
    (t_{k + 1}, u_{t_{k + 1}}) - \varphi (t_k, u_{t_{k + 1}}) + \varphi (t_k,
    u_{t_{k + 1}}) - \varphi (t_k, u_{t_k})]\\
    & = \sum_{k = 0}^{n - 1} \left[ \int_{t_k}^{t_{k + 1}} \partial_s \varphi
    (s, u_{t_{k + 1}}) \dd s + \int_{t_k}^{t_{k + 1}} \LL \varphi
    (t_k, u_s) \dd s + M^{\varphi (t_k)}_{t_{k + 1}} - M^{\varphi
    (t_k)}_{t_k} \right] .
  \end{align*}
  Now for $[s]^n = \min \{ t_k : t_k \geqslant s \}$ (which depends on $n$
  because the $t_k$ depend on $n$)
  \begin{align*}
    & \E \left[ \left| \sum_{k = 0}^{n - 1} \int_{t_k}^{t_{k + 1}}
    \partial_s \varphi (s, u_{t_{k + 1}}) \dd s - \int_0^t \partial_s
    \varphi (s, u_s) \dd s \right| \right]\\
    & \hspace{30pt} \leqslant \int_0^t \E [| \partial_s \varphi (s,
    u_{[s]^n}) - \partial_s \varphi (s, u_s) |] \dd s,
  \end{align*}
  and
  \[ \E [| \partial_s \varphi (s, u_{[s]^n}) - \partial_s \varphi (s,
     u_s) |] \leqslant \E [| \partial_s \varphi (s, u_{[s]^n}) |]
     +\E [| \partial_s \varphi (s, u_s) |] \lesssim \| \partial_s
     \varphi (s) \| \]
  is bounded in $[0, t]$. Moreover, by approximating $\partial_s \varphi (s)$
  in $\Gamma L^2$ with continuous functions, we get $\lim_{n \rightarrow
  \infty} \E [| \partial_s \varphi (s, u_{[s]^n}) - \partial_s
  \varphi (s, u_s) |] = 0$ for all $s$, and therefore by dominated convergence
  \[ \lim_{n \rightarrow \infty} \E \left[ \left| \sum_{k = 0}^{n -
     1} \int_{t_k}^{t_{k + 1}} \partial_s \varphi (s, u_{t_{k + 1}}) \dd s
     - \int_0^t \partial_s \varphi (s, u_s) \dd s \right| \right] = 0. \]
  Since $\varphi \in C (\R_+, \CD(\LL))$ we know that
  $\LL \varphi \in C (\R_+, \Gamma L^2)$ and thus, using
  once more the incompressibility,
  \[ \lim_{n \rightarrow \infty} \E \left[ \left| \sum_{k = 0}^{n -
     1} \int_{t_k}^{t_{k + 1}} \LL \varphi (t_k, u_s) \dd s -
     \int_0^t \LL \varphi (s, u_s) \dd s \right| \right] = 0. \]
  The convergence of the Lebesgue integrals is in $L^1$, and therefore the
  martingale property is inherited in the limit:
  \begin{align*}
    0 & = \lim_{n \rightarrow \infty} \E \left[ \varphi (t, u_t) -
    \varphi (0, u_0) - \sum_{k = 0}^{n - 1} \left[ \int_{t_k}^{t_{k + 1}}
    \partial_s \varphi (s, u_{t_{k + 1}}) \dd s + \int_{t_k}^{t_{k + 1}}
    \LL \varphi (t_k, u_s) \dd s \right] \right]\\
    & =\E \left[ \varphi (t, u_t) - \varphi (0, u_0) - \int_0^t
    [\partial_s \varphi (s, u_s) +\LL \varphi (s, u_s)] \dd s
    \right],
  \end{align*}
  and similarly for the conditional expectations.
\end{proof}

\bibliography{all}
\bibliographystyle{alpha}

\end{document}